\DeclareMathOperator{\range}{Im}  
\newcommand{\andSep}{\,\,\,\text{ and }\,\,\,}
\newcommand{\ZZ}{{\mathbb{Z}}}
\newcommand{\CC}{{\mathbb{C}}}
\newcommand{\ca}{$C^*$-algebra}
\newcommand{\tensMax}{\otimes_{\mathrm{max}}}
\newcommand{\tensMin}{\otimes}
\newcommand{\tensProj}{\widehat{\otimes}}
\newcommand{\tensInj}{\check{\otimes}}
\newcommand{\red}{{\mathrm{red}}}
\numberwithin{equation}{section}
\newtheorem{lma}{Lemma}[section]
\newaliascnt{thmCt}{lma}
\newtheorem{thm}[thmCt]{Theorem}
\newaliascnt{corCt}{lma}
\newtheorem{cor}[corCt]{Corollary}
\newaliascnt{prpCt}{lma}
\newtheorem{prp}[prpCt]{Proposition}
\theoremstyle{definition}
\newaliascnt{pgrCt}{lma}
\newtheorem{pgr}[pgrCt]{Paragraph}
\newaliascnt{dfnCt}{lma}
\newtheorem{dfn}[dfnCt]{Definition}
\newaliascnt{rmkCt}{lma}
\newtheorem{rmk}[rmkCt]{Remark}
\newaliascnt{qstCt}{lma}
\newtheorem{qst}[qstCt]{Question}
\newaliascnt{cnvCt}{lma}
\newtheorem{cnv}[cnvCt]{Convention}
\newaliascnt{exaCt}{lma}
\newtheorem{exa}[exaCt]{Example}
\newcounter{theoremintro}
\newaliascnt{thmIntroCt}{theoremintro}
\newtheorem{thmIntro}[thmIntroCt]{Theorem}
\newaliascnt{qstIntroCt}{theoremintro}
\newtheorem{qstIntro}[qstIntroCt]{Question}
\newaliascnt{dfnIntroCt}{theoremintro}
\newtheorem{dfnIntro}[dfnIntroCt]{Definition}
\newaliascnt{prpIntroCt}{theoremintro}
\newtheorem{prpIntro}[prpIntroCt]{Proposition}
\title{The ideal separation property for reduced group $C^*$-algebras}
\author{Are Austad}
\address{Are~Austad, Department of Mathematics, University of Oslo, P.O. Box 1053 Blindern, N-0316 Oslo, Norway}
\email{areaus@math.uio.no}
\urladdr{www.sites.google.com/view/austad}
\author{Hannes Thiel}
\address{Hannes~Thiel, Department of Mathematical Sciences, Chalmers University of Technology and University of Gothenburg, Gothenburg SE-412 96, Sweden.}
\email{hannes.thiel@chalmers.se}
\urladdr{www.hannesthiel.org}
\subjclass[2020]%
{Primary
22D15, % Group algebras of locally compact groups
43A20, % L1-algebras on groups, semigroups, etc.
46H10; % Ideals and subalgebras
Secondary
22D20, % Representations of group algebras
22D25, % C∗-algebras and W∗-algebras in relation to group representations
46L05. % General theory of C∗-algebras
}
\keywords{ideal separation property, ideal intersection property, locally compact groups, group $C^*$-algebra, $C^*$-uniqueness, $*$-regularity}
\thanks{The first named author was partially supported by the Independent Research Fund Denmark
	through grant number 1026-00371B and The Research Council of Norway project 324944. 
	The second named author was partially supported by the Knut and Alice Wallenberg Foundation (KAW 2021.0140).
}
\date{\today}
\begin{document}
	
%==========================================================================================

\begin{abstract}
We say that an inclusion of an algebra $A$ into a $C^*$-algebra $B$ has the ideal separation property if closed ideals in $B$ can be recovered by their intersection with $A$.
Such inclusions have attractive properties from the point of view of harmonic analysis and noncommutative geometry. 
We establish several permanence properties of locally compact groups for which $L^1(G) \subseteq C^*_{\mathrm{red}}(G)$ has the ideal separation property.
\end{abstract}
	
\maketitle
	
%==========================================================================================
%==========================================================================================
\section{Introduction}

%==========================================================================================
A locally compact group $G$ is called \emph{$*$-regular} if for any closed, two-sided ideal~$I$ in the full group \ca{} $C^*(G)$, we have $I = \overline{I \cap L^1(G)}$, where the closure is in $C^*(G)$-norm.
An in-depth study of groups with this property was first initiated by Boidol, Leptin, Schurman and Vahle in \cite{BoiLepSchVah78PrimGpCAlg}.
They identified that groups of polynomial growth are $*$-regular.  
Boidol then continued to add to the list of $*$-regular groups, showing that semidirect products of abelian groups % and discrete solvable hermitian groups 
belong to this class \cite{Boi82StarRegSomeSolvGps}.
He also characterized the connected groups which are $*$-regular \cite{Boi82CtdGpsPolyDual}.
Later, Hauenschild, Kaniuth and Voigt \cite{HauKanVoi90StarRegTensProdCAlg} showed that the class of $*$-regular groups is closed under Cartesian products. 
Boidol conjectured that every symmetric (=hermitian) locally compact group is $*$-regular, and that conversely every almost connected, $*$-regular group is symmetric \cite{Boi82StarRegSomeSolvGps}.
To honor his contributions to the theory, $*$-regular groups are sometimes referred to as Boidol groups 
\cite[Definition~10.5.8]{Pal01BAlg2}.
	
A closely related notion is that of $C^*$-uniqueness, where a locally compact group~$G$ is said to be \emph{$C^*$-unique} if the Banach $*$-algebra $L^1(G)$ has a unique $C^*$-norm. 
It is not difficult to see that $L^1(G)$ has a unique $C^*$-norm if and only if, for any non-zero, closed, two-sided ideal $I$ in $C^*(G)$, we have $I \cap L^1(G) \neq \{0\}$. 
From this it follows that every $*$-regular group is $C^*$-unique, and that every $C^*$-unique group has the weak containment property (the full and reduced group \ca{} coincide), and therefore is amenable;
see also \cite[Proposition~2]{Boi84GpAlgsUniqueCNorm}.
In recent work, the first named author and Raum \cite{AusRau23arX:DetectIdlsRedCrProd} showed that locally virtually polycyclic groups are $C^*$-unique.

Boidol gave examples of amenable groups which are not $C^*$-unique, and of $C^*$-unique groups which are not $*$-regular \cite[p.230]{Boi84GpAlgsUniqueCNorm}, thereby showing strict containment between these classes of groups.
However, it remains unknown if such examples exist among discrete groups, prompting Leung and Ng to pose the question if all discrete, amenable group are $C^*$-unique \cite{LeuNg04PermPropCUniqueGps}.
An even more ambitious, yet also unresolved, question is:

%==========================================================================================
\begin{qstIntro}
\label{qst:discrete-amenable-is-*-regular}
\emph{Are all discrete, amenable groups $*$-regular?}
\end{qstIntro}

While Boidol focused on $*$-regularity and uniqueness of $C^*$-norm for $L^1$-algebras of locally compact groups, the notions were studied for more general $*$-algebras by Barnes in \cite{Bar83StarRegUniqueCNorm}. 
We also mention that $*$-regularity of more general convolution algebras has been studied by Leung and Ng \cite{LeuNg06FctlIdStarReg1, LeuNg06FctlIdStarReg2}, Leung \cite{Leu11RegGenGpAlg}, and more recently by Flores \cite{Flo24arX:PolyGrwthFctlCalc}.
On the other hand, $C^*$-uniqueness was studied for twisted group algebras by the first named author \cite{Aus21SpecInvTwConvAlg}, and for twisted groupoid algebras in collaboration with Ortega \cite{AusOrt22CUniuqueGpds}.%, AusOrt22GpdsHermBAlg}.

A closely related topic is \emph{algebraic $C^*$-uniqueness} for a discrete group $\Gamma$, defined by uniqueness of the $C^*$-norm on the group algebra $\CC[\Gamma]$.
It is straightforward to see that this implies uniqueness of the $C^*$-norm on $\ell^1(\Gamma)$, and consequently, it also implies amenability.
However, examples like the group of integers show that $C^*$-uniqueness is a strictly weaker property than algebraic $C^*$-uniqueness.
The question of which discrete groups are algebraically $C^*$-unique has attracted much attention in recent years \cite{GriMusRor18JustInfCAlgs, AleKye19UniqueCNormGpRg, Sca20TorsFreeAlgCUniqueGp}.
Interestingly, $*$-regularity of~$\CC[\Gamma]$ appears to not have been investigated so far.
	
\medskip

One can view $*$-regularity and $C^*$-uniqueness of a locally compact group $G$ as the ideal separation and ideal intersection properties for the inclusion $L^1(G) \subseteq C^*(G)$.
Here, we say that a subalgebra $A$ of a \ca{} $B$ has the \emph{ideal separation property} (ISP for short) if for any closed, two-sided ideals $I$ and $J$ in $B$ we have $I \subseteq J$ whenever $I \cap A \subseteq J \cap A$.
Likewise, we say $A \subseteq B$ has the \emph{ideal intersection property} (IIP for short) if for any non-zero, closed, two-sided ideal $I$ in $B$ we have $I \cap A \neq \{0\}$;
see \autoref{dfn:ISP-IIP}.

Barnes' notions of $*$-regularity and $C^*$-uniqueness for a $*$-algebra $A$ then correspond precisely to the ISP and the IIP for the inclusion $A \subseteq C^*(A)$, where $C^*(A)$ is the enveloping \ca{} of $A$.
(One needs to assume that $A$ admits some $C^*$-norm so that $A$ embeds into $C^*(A)$.)

The ISP and the IIP have also been extensively studied for \ca{s} arising from dynamical systems or groupoids.
%for the inclusion of $C(X)$ in the reduced crossed product $C(X)\rtimes_\red G$, and more generally for the inclusion $C(\mathcal{G}^{(0)}) \subseteq C^*_\red(\mathcal{G})$ for a groupoid $\mathcal{G}$.
For example, Kawamura and Tomiyama \cite{KawTom90PropTopDynSysCAlg} showed that the action of an amenable, discrete group $\Gamma$ on a compact, Hausdorff space~$X$ is topologically free if and only if $C(X) \subseteq C(X) \rtimes \Gamma$ has the ISP.
For actions on possibly noncommutative \ca{s} $A$, the ISP and the IIP for the inclusion $A \subseteq A \rtimes_\red \Gamma$ have been studied for example by Sierakowski \cite{Sie10IdealsRedCrProd}, Kennedy and Schafhauser \cite{KenSch19NCBoundariesIdealRedCrProd} and Geffen and Ursu \cite{GefUrs23arX:SimpleCrProdFCHyperGps}.
The IIP and the ISP for the inclusion $C(\mathcal{G}^{(0)}) \subseteq C^*_\red(\mathcal{G})$ for a groupoid $\mathcal{G}$ have for example been studied by Kennedy, Kim, Li, Raum and Ursu \cite{KenKimLiRauUrs21arX:ISPEssGpd} and Brix, Carlsen and Sims \cite{BriCarSim24IdlStrEtaleGpd}.

We note that the ISP and the IIP are always studied with respect to the \emph{reduced} crossed product and the \emph{reduced} groupoid algebra.
The reason is that the kernels of the quotient maps $A \rtimes G \to A \rtimes_\red G$ and $C^*(\mathcal{G}) \to C^*_\red(\mathcal{G})$ have trivial intersection with $A$ and $C(\mathcal{G}^{(0)})$, respectively.

Similarly, for the canonical quotient map $\lambda\colon C^*(G) \to C^*_\red(G)$, we always have $L^1(G) \cap \ker \lambda = \{0\}$.   
Therefore, if $G$ is nonamenable, then $L^1(G) \subseteq C^*(G)$ does not have the IIP, let alone the ISP.
We therefore believe that the natural setting to study ideal separation and ideal intersection for a locally compact group $G$ is with respect to the inclusion of $L^1(G)$ in the reduced $C^*$-algebra $C^*_\red(G)$, and we define:

%==========================================================================================
\begin{dfnIntro}
\label{dfn:ISP-IIP-groups}
We say that a locally compact group $G$ has
\begin{enumerate}
\item 
the \emph{ideal separation property}, ISP for short, if for any distinct closed, two-sided ideals $I,J \subseteq C^*_\red(G)$ we have $I \cap L^1(G) \neq J \cap L^1(G)$.
\item 
the \emph{ideal intersection property}, IIP for short, if for every non-zero, closed, two-sided ideal $I \subseteq C^*_\red(G)$ we have $I \cap L^1(G) \neq \{0\}$.
\end{enumerate}
\end{dfnIntro}

%==========================================================================================
We make the following easy, yet useful, observation, whose proof follows from the above discussion:

%==========================================================================================
\begin{prpIntro}
\label{prp:CharStarReg}
Let $G$ be a locally compact group.
The following statements hold:
\begin{enumerate}
\item 
The group $G$ is $*$-regular if and only if $G$ is amenable and has the ISP.
\item 
The group $G$ is $C^*$-unique if and only if $G$ is amenable and has the IIP.
\end{enumerate}
\end{prpIntro}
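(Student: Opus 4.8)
The plan is to reduce both parts to the amenable case, where the canonical quotient $\lambda\colon C^*(G)\to C^*_\red(G)$ is an isomorphism and Barnes' conditions defining $*$-regularity and $C^*$-uniqueness literally become the ISP and the IIP for the inclusion $L^1(G)\subseteq C^*_\red(G)$. The first point is that in the ``only if'' directions amenability is automatic: a $*$-regular group is $C^*$-unique (if a closed two-sided ideal $I\subseteq C^*(G)$ satisfies $I\cap L^1(G)=\{0\}$, then $I=\overline{I\cap L^1(G)}=\{0\}$), and a $C^*$-unique group has the weak containment property, hence is amenable, by the discussion preceding the proposition. So in all implications of both parts we may assume $G$ is amenable, in which case $\lambda$ is an isomorphism identifying $C^*(G)$ with $C^*_\red(G)$ and carrying the dense subalgebra $L^1(G)\subseteq C^*(G)$ onto its canonical copy in $C^*_\red(G)$; here one uses the standard fact, already recalled in the excerpt, that $\lambda|_{L^1(G)}$ is injective.

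Granting this identification, part~(2) is immediate: $C^*$-uniqueness of $G$ means precisely that every non-zero closed two-sided ideal of $C^*(G)=C^*_\red(G)$ meets $L^1(G)$ non-trivially, which is exactly the IIP of \autoref{dfn:ISP-IIP-groups}. For part~(1), if $G$ is $*$-regular and $I,J\subseteq C^*_\red(G)$ are closed two-sided ideals with $I\cap L^1(G)=J\cap L^1(G)$, then $I=\overline{I\cap L^1(G)}=\overline{J\cap L^1(G)}=J$, so $G$ has the ISP. Conversely, suppose $G$ is amenable and has the ISP, and let $I\subseteq C^*_\red(G)$ be a closed two-sided ideal. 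Put $J:=\overline{I\cap L^1(G)}$; since $L^1(G)$ is dense in $C^*_\red(G)$, the set $I\cap L^1(G)$ is a two-sided ideal of $L^1(G)$ and its closure $J$ is a closed two-sided ideal of $C^*_\red(G)$, with $I\cap L^1(G)\subseteq J\subseteq\overline I=I$. Intersecting this chain with $L^1(G)$ gives $I\cap L^1(G)\subseteq J\cap L^1(G)\subseteq I\cap L^1(G)$, hence $J\cap L^1(G)=I\cap L^1(G)$; the ISP then forces $J=I$, i.e.\ $I=\overline{I\cap L^1(G)}$, so $G$ is $*$-regular.

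The only point requiring a little care is the passage between the ``distinctness'' formulation of the ISP in \autoref{dfn:ISP-IIP-groups} (distinct closed two-sided ideals have distinct intersections with $L^1(G)$) and the equivalent ``containment'' formulation of the ISP for a subalgebra used above (as in \autoref{dfn:ISP-IIP}, namely $I\subseteq J$ whenever $I\cap L^1(G)\subseteq J\cap L^1(G)$). These coincide because, for closed two-sided ideals $I,J\subseteq C^*_\red(G)$, the intersection $I\cap J$ is again a closed two-sided ideal and $(I\cap J)\cap L^1(G)=(I\cap L^1(G))\cap(J\cap L^1(G))$; so $I\cap L^1(G)\subseteq J\cap L^1(G)$ yields $(I\cap J)\cap L^1(G)=I\cap L^1(G)$, and distinctness gives $I\cap J=I$, i.e.\ $I\subseteq J$. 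I would record this equivalence once at the outset and then use whichever formulation is convenient in each step. Beyond this bookkeeping and the cited implication ``$C^*$-unique $\Rightarrow$ amenable'', I expect no real obstacle: the proposition is essentially an unwinding of definitions.
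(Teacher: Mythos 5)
Your proposal is correct and follows essentially the same route the paper intends: the paper leaves the proof as an unwinding of the introductory discussion ($*$-regular $\Rightarrow$ $C^*$-unique $\Rightarrow$ weak containment $\Rightarrow$ amenable, after which $\lambda$ identifies $C^*(G)$ with $C^*_\red(G)$ and the definitions match the ISP/IIP), and your write-up is precisely that unwinding, with the distinctness-versus-containment formulations reconciled just as in \autoref{prp:CharISPGeneral}.
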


%==========================================================================================
Thus, the known classes of $*$-regular or $C^*$-unique groups provide examples of (automatically amenable) groups with the ISP or the IIP.
Moreover, there are many non-amenable groups with the ISP or the IIP.
For example, it is immediate that the class of groups with the ISP includes all $C^*$-simple groups, that is, all groups $G$ for which $C^*_\red(G)$ is simple.

We note that \autoref{dfn:ISP-IIP-groups} is inspired by the analogous property for $C^*$-dynamical systems from \cite{AusRau23arX:DetectIdlsRedCrProd}, where it is termed the $L^1$-IIP.

\medskip

In this paper we establish several permanence properties of the ISP and the IIP for locally compact groups, as summarized in the following theorem.

%==========================================================================================
\begin{thmIntro}
\label{prp:PermanenceSummary}
Let $G$ and $H$ be locally compact groups. 
\begin{enumerate}
\item 
If $G$ has the ISP (the IIP) and $N \subseteq G$ is a closed (compact), amenable, normal subgroup, then $G/N$ has the ISP (the IIP);
see \autoref{prp:Quotient}.
\item
If every open, relatively compact subset of $G$ is contained in a closed subgroup with the ISP (the IIP), then $G$ has the ISP (the IIP);
see \autoref{prp:ApproxByClosedSubgroups}.
\item 
If $G = \varprojlim_\lambda G_\lambda$ is an inverse limit such that the maps $G \to G_\lambda$ are surjective with compact kernel, then $G$ has the ISP (the IIP) if and only if each $G_\lambda$ does;
see \autoref{prp:InvLim}.
\item 
The product group $G \times H$ has the IIP (the ISP) if and only if both $G$ and $H$ have the IIP (the ISP, and the pair of group \ca{s} $C^*_\red(G)$ and $C^*_\red(H)$ has Tomiyama's property~(F));
see \autoref{prp:ProductGp}.
\end{enumerate}
\end{thmIntro}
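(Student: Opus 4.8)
This theorem is the union of the four permanence statements \autoref{prp:Quotient}, \autoref{prp:ApproxByClosedSubgroups}, \autoref{prp:InvLim} and \autoref{prp:ProductGp}, so proving it amounts to proving each of these. A convenient reformulation, obtained by applying \autoref{dfn:ISP-IIP-groups} to the pair consisting of a closed ideal $I \subseteq C^*_\red(G)$ and the ideal $\overline{I \cap L^1(G)}$, is that $G$ has the ISP if and only if every closed ideal $I \subseteq C^*_\red(G)$ satisfies $I = \overline{I \cap L^1(G)}$, and $G$ has the IIP if and only if every non-zero such ideal has $I \cap L^1(G) \neq \{0\}$. The recurring structural inputs are: for an open subgroup $H \le G$ one has $L^1(H) \subseteq L^1(G)$ and $C^*_\red(H) \subseteq C^*_\red(G)$; for a compact normal subgroup $N$ the normalised Haar measure $p_N$ is a \emph{central} projection of $C^*_\red(G)$ lying in $L^1(G)$; for a closed amenable normal subgroup $N$ the quasi-regular representation of $G/N$ is weakly contained in the left regular representation of $G$, yielding a surjection $\pi \colon C^*_\red(G) \twoheadrightarrow C^*_\red(G/N)$ that restricts to the surjective averaging map $L^1(G) \twoheadrightarrow L^1(G/N)$; and $C^*_\red(G \times H) \cong C^*_\red(G) \tensMin C^*_\red(H)$ with $L^1(G \times H) \cong L^1(G) \tensProj L^1(H)$.

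For \autoref{prp:Quotient} with $N$ compact one uses the central projection $p_N$ to split $C^*_\red(G) = p_N C^*_\red(G) \oplus (1 - p_N) C^*_\red(G)$ as a direct sum of ideals with $p_N C^*_\red(G) \cong C^*_\red(G/N)$, and similarly $L^1(G) = L^1(G/N) \oplus (1 - p_N) L^1(G)$; a closed ideal $\mathfrak I \subseteq C^*_\red(G/N)$ then corresponds to $\mathfrak I \oplus 0 \subseteq C^*_\red(G)$, and both properties for $G/N$ are read off from those of $G$. For $N$ only closed and amenable there is no such splitting, but one can push an ideal $I \subseteq C^*_\red(G)$ forward along $\pi$: the closed ideals of $C^*_\red(G/N)$ are precisely the images of those containing $\ker \pi$, surjectivity of $L^1(G) \to L^1(G/N)$ gives $\pi(I \cap L^1(G)) = \pi(I) \cap L^1(G/N)$, and openness of $\pi$ lets the identity $I = \overline{I \cap L^1(G)}$ pass to $\pi(I) = \overline{\pi(I) \cap L^1(G/N)}$; this yields the ISP assertion. \autoref{prp:InvLim} then follows by combining this with the observation that when the kernels $N_\lambda$ are compact and decrease to $\{e\}$, the projections $p_{N_\lambda}$ converge strictly to $1$, so $C^*_\red(G) = \varinjlim_\lambda p_{N_\lambda} C^*_\red(G) p_{N_\lambda} = \varinjlim_\lambda C^*_\red(G_\lambda)$ and similarly for $L^1$; the ``if'' part is the special case of the argument for \autoref{prp:ApproxByClosedSubgroups} below, and the ``only if'' part is \autoref{prp:Quotient}.

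The core of \autoref{prp:ApproxByClosedSubgroups} is the lemma that if a \ca{} $B$ is the closed union of an upward-directed family of sub-\ca{s} $B_\lambda$, then every closed ideal $I \subseteq B$ satisfies $I = \overline{\bigcup_\lambda (I \cap B_\lambda)}$. This rests on the fact that $B_\lambda / (B_\lambda \cap I) \hookrightarrow B / I$ is isometric, so $\mathrm{dist}(y, B_\lambda \cap I) = \mathrm{dist}(y, I)$ for $y \in B_\lambda$, after which density of $\bigcup_\lambda B_\lambda$ gives the claim by routine approximation (and a variant works even without directedness, handling one $B_\lambda$ at a time). The hypothesis makes $C^*_\red(G)$ the closed union of the $C^*_\red(H)$ over the subgroups $H$ in question, which are automatically open since they contain non-empty open sets, so that $L^1(H) \subseteq L^1(G)$ applies. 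Intersecting a closed ideal $I \subseteq C^*_\red(G)$ with each $C^*_\red(H)$, the ISP of $H$ gives $I \cap C^*_\red(H) = \overline{I \cap L^1(H)} \subseteq \overline{I \cap L^1(G)}$ and then the lemma yields $I = \overline{I \cap L^1(G)}$; for the IIP the lemma forces $I \cap C^*_\red(H) \neq \{0\}$ for some $H$, and one invokes the IIP of that $H$.

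\autoref{prp:ProductGp} is the substantive case. Writing $A = C^*_\red(G)$, $B = C^*_\red(H)$ and $\mathcal A = L^1(G) \tensProj L^1(H) \subseteq A \tensMin B$, the basic tool is the family of slice maps $S_\phi = \mathrm{id} \tensMin \phi \colon A \tensMin B \to A$ (and their mirror images): they separate the points of $A \tensMin B$, carry closed ideals into closed ideals, and, for $\phi$ a state, send $\mathcal A$ into $L^1(G)$. This disposes of the IIP statement: if $G \times H$ has the IIP and $J \subseteq A$ is a non-zero closed ideal, then $J \tensMin B$ is non-zero, so $(J \tensMin B) \cap \mathcal A$ contains a non-zero $\xi$ and some $S_\phi(\xi)$ is a non-zero element of $J \cap L^1(G)$; conversely, given a non-zero closed ideal $K \subseteq A \tensMin B$ one analyses $K$ through its $A$- and $B$-support ideals, the closed ideals of $A$ and of $B$ generated by the slices $S_\phi(k)$ with $k \in K$ and $\phi$ a state, which are non-zero and hence meet $L^1(G)$ and $L^1(H)$, to produce a non-zero element of $K \cap \mathcal A$ (this step can also be organised via the identification of $A \tensMin B$ with the reduced crossed product $C^*_\red(H) \rtimes_{\red} G$ for the trivial action and an $L^1$-version of the ideal intersection property for such crossed products). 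For the ISP one first checks, using $I = \overline{I \cap L^1(G)}$, $J = \overline{J \cap L^1(H)}$ and density of the algebraic tensor product of $L^1(G)$ and $L^1(H)$, that every ideal of the form $I \tensMin B + A \tensMin J$ equals the closure of its intersection with $\mathcal A$; Tomiyama's property~(F) is precisely the statement that an arbitrary closed ideal of $A \tensMin B$ is an intersection of such ideals, while if~(F) fails there is a closed ideal not of this product type that is not recovered from $\mathcal A$, so~(F) is also necessary. \textbf{I expect this last point to be the main obstacle:} since $K \mapsto \overline{K \cap \mathcal A}$ does not commute with infinite intersections of ideals, the reduction afforded by property~(F) cannot be carried out naively on the ideals, and one must instead argue at the level of primitive ideals (equivalently, irreducible representations), using that under~(F) every primitive ideal of $A \tensMin B$ has the form $\ker \pi \tensMin B + A \tensMin \ker \rho$ for irreducible representations $\pi$ of $A$ and $\rho$ of $B$.
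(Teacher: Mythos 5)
Your treatment of parts (1)--(3) is essentially sound. For \autoref{prp:Quotient} you follow the paper's route (the averaging surjections $L^1(G)\to L^1(G/N)$ and $C^*_\red(G)\to C^*_\red(G/N)$, and the central projection coming from the normalized Haar measure when $N$ is compact). For \autoref{prp:ApproxByClosedSubgroups} and the converse direction of \autoref{prp:InvLim} you take a genuinely different route: instead of the representation-theoretic reformulation \autoref{prp:norm-reformulations} that the paper uses, you rely on the lemma that if a \ca{} $B$ is the closed union of sub-\ca{s} $B_\lambda$, then every closed ideal $I\subseteq B$ equals $\overline{\bigcup_\lambda (I\cap B_\lambda)}$, because each $B_\lambda/(B_\lambda\cap I)\hookrightarrow B/I$ is isometric. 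Applied to the copies $C^*_\red(H_i)\subseteq C^*_\red(G)$ for open subgroups, and to the corners $\varphi(z_\lambda)C^*_\red(G)\cong C^*_\red(G_\lambda)$ for inverse limits, this is correct and gives a uniform, arguably cleaner mechanism for both results; in particular it lets you bypass the paper's Claim~2 in the proof of \autoref{prp:InvLim}. The only bookkeeping you owe is that the relevant copies of $L^1(H_i)$, respectively $z_\lambda L^1(G)$, sit inside $L^1(G)$ compatibly with the identifications, which they do.

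The genuine gap is in part (4), in the direction asserting that the ISP for $G\times H$ forces Tomiyama's property~(F). You write that ``if (F) fails there is a closed ideal not of this product type that is not recovered from $\mathcal A$,'' but this is exactly the statement that needs proof, and it is the paper's main new technical input. The difficulty is that $\mathcal A = L^1(G)\tensProj L^1(H)$ is \emph{not} contained in the algebraic tensor product $A\odot B$, so the easy computation of $P\cap(A\odot B)$ for a closed prime ideal $P\subseteq A\tensMin B$ (which underlies the Blanchard--Kirchberg/Lazar characterization of~(F)) does not apply. One must instead compute $P\cap(A\tensProj B)$. The paper does this in \autoref{prp:PrimeIdlTensMin}, using the structure theorem for closed prime ideals of the Banach $*$-algebra $A\tensProj B$ (\autoref{prp:PrimeIdlTensProj}, resting on \autoref{prp:TensProjPrime} and exactness of $\tensProj$): for a closed prime $P\subseteq A\tensMin B$ with associated primes $J\subseteq A$, $K\subseteq B$ one gets $P\cap(A\tensProj B)=J\tensProj B+A\tensProj K$, whose closure in $A\tensMin B$ is $J\tensMin B+A\tensMin K$. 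Hence any closed prime ideal that is the closure of its intersection with $A\tensProj B$ (a fortiori with $\mathcal A$) is automatically of product type, which is condition~(iii) of Blanchard--Kirchberg and yields~(F). Without this computation your necessity claim is an unsupported assertion.

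On the obstacle you flag for the sufficiency direction: you are right that the ``intersection of product-type ideals'' formulation of~(F) does not interact well with $K\mapsto\overline{K\cap\mathcal A}$. The resolution is to use the equivalent formulation that under~(F) every closed ideal $I$ is the \emph{closure of the sum} of the product ideals it contains; since the closed ideal generated by $I\cap\mathcal A$ contains each such product ideal (by the ISP of the two factors), it contains $I$, and one is done. Your proposed detour through primitive ideals can also be made to work, but it still needs the sum characterization to locate a product ideal inside $I$ that escapes a given primitive ideal containing $J$, so it does not avoid this input.
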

	
%==========================================================================================
Statement~(2) shows in particular that the ISP and the IIP are local properties;
see \autoref{prp:LocalProps}.
Further, statement~(3) implies that the ISP and the IIP for an almost connected, locally compact group are determined by its Lie group quotients;
see \autoref{prp:LieQuotients}.

We note that our results for Cartesian products allow us to deduce that groups such as $\ZZ \times \mathbb{F}_2$ have the ISP, which was not known with the current technology. 
Further, \autoref{prp:ProductGp} offers a strategy to exhibit a counterexample to the following natural extension of \autoref{qst:discrete-amenable-is-*-regular} to the setting of nonamenable groups:

%==========================================================================================
\begin{qstIntro}
\label{qst:DiscreteISP}
\emph{Do all discrete groups have the ideal separation property?}
\end{qstIntro}

%==========================================================================================
Indeed, if $\Gamma$ and $\Delta$ are discrete groups for which the pair of group \ca{s} $C^*_\red(\Gamma)$ and $C^*_\red(\Delta)$ does not have Tomiyama's property (F), then $\Gamma \times \Delta$ does not have the ISP.
It remains unclear if such a pair of groups exists;
see \autoref{qst:PropFGroupAlgs} and the discussion preceding it.

\medskip
	
%==========================================================================================
The paper is structured as follows. 
In \autoref{sec:defn-and-first-props} we introduce the ISP and the IIP for an inclusion of a subalgebra in a \ca{}.
We give characterizations in terms of primitive ideals and $*$-structure spaces.

Specializing to locally compact groups, we treat the inclusion $L^1(G) \subseteq C^*_\red(G)$ in detail in \autoref{sec:perm}, and we investigate preservation of the ISP and the IIP with respect to quotients, approximation by closed subgroups, inductive limits, and inverse limits.
Lastly, \autoref{sec:TensProd} deals with the ISP and the IIP for the inclusion of the algebraic and projective tensor product in the minimal tensor product of \ca{s}. 
This is then applied to characterize the ISP and the IIP for Cartesian products of locally compact groups. 
	
%==========================================================================================
\subsection*{Acknowledgments}
	
%==========================================================================================
The authors thank Leonel Robert for valuable comments on the minimal tensor product of \ca{s}.

%==========================================================================================
%==========================================================================================
\section{Definition and first properties}
\label{sec:defn-and-first-props}

%==========================================================================================
In this section, we introduce the ideal separation property (ISP) and the ideal intersection property (IIP) for the inclusion of a subalgebra into a \ca{}, and we study basic properties of these notions.
We neither assume that the subalgebra is dense nor closed, to allow for the study of inclusions such as $A \subseteq A\rtimes_\red G$ as well as $L^1(G) \subseteq C^*_\red(G)$.

Later we will specialize to the inclusion $A \subseteq B$ of a dense $*$-subalgebra $A$ in a \ca{} $B$, and we give characterizations of the ISP (\autoref{prp:CharISPDense}) and the IIP (\autoref{prp:CharIIP}) in terms of the connection between the $*$-structure space of $A$ and the primitive ideal space of $B$.
We also characterize the ISP and the IIP using estimates on $C^*$-seminorms;
see \autoref{prp:norm-reformulations}.

%==========================================================================================
\begin{cnv}
All ideals in this paper are assumed to be two-sided. 
\end{cnv}

%==========================================================================================
\begin{dfn}
\label{dfn:ISP-IIP}
Let $A \subseteq B$ be an inclusion of a subalgebra into a \ca{}~$B$.
\begin{enumerate}
\item 
We say that $A \subseteq B$ has the \emph{ideal separation property} (ISP for short), or that \emph{$A$ separates ideals in $B$}, if for all distinct closed ideals $I$ and $J$ in $B$ we have $I \cap A \neq J \cap A$.
\item 
We say that $A \subseteq B$ has the \emph{ideal intersection property} (IIP for short), or that \emph{$A$ detects ideals in $B$}, if for every non-zero, closed ideal $I$ in $B$ we have $I \cap A \neq \{0\}$. 
\end{enumerate}
\end{dfn}

%==========================================================================================
The next result provides basic characterizations of the ISP.
Most useful is condition~(3), which states that every closed ideal in $B$ can be \emph{reconstructed} from its intersection with $A$.

%==========================================================================================
\begin{prp}
\label{prp:CharISPGeneral}
Let $A \subseteq B$ be an inclusion of a subalgebra into a \ca{}~$B$.
The following statements are equivalent:
\begin{enumerate}
\item 
The inclusion $A \subseteq B$ has the ISP.
\item
For every closed ideal $I \subseteq B$, the inclusion $(A + I)/I \subseteq B/I$ has the IIP.
\item
Every closed ideal $I \subseteq B$ is equal to the closed ideal of $B$ generated by $I \cap A$.
\item
For any closed ideals $I,J \subseteq B$ we have $I \subseteq J$ whenever $I \cap A \subseteq J \cap A$.
\end{enumerate}
\end{prp}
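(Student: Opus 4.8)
The plan is to establish the cyclic chain of implications $(1) \Rightarrow (2) \Rightarrow (4) \Rightarrow (3) \Rightarrow (1)$. The central construction is the following: to each closed ideal $I \subseteq B$ we associate the closed ideal $I^\flat \subseteq B$ generated by $I \cap A$. Since $I \cap A \subseteq I$ and $I$ is a closed ideal, we have $I^\flat \subseteq I$; on the other hand $I \cap A \subseteq I^\flat \cap A$, so in fact $I^\flat \cap A = I \cap A$. Thus $I^\flat$ is a closed ideal with the same trace on $A$ as $I$, and it is contained in $I$; condition (3) is precisely the assertion that $I^\flat = I$ for every $I$. I will also use freely the standard lattice isomorphism between closed ideals of $B/I$ and closed ideals of $B$ containing $I$, together with the fact that $A + I$ is a subalgebra of $B$ (as $A$ is a subalgebra and $I$ a two-sided ideal) whose image under the quotient map $B \to B/I$ is $(A+I)/I$.

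For $(1) \Rightarrow (2)$ I would fix a closed ideal $I \subseteq B$ and a non-zero closed ideal of $B/I$, writing it as $K/I$ for a closed ideal $K$ with $I \subsetneq K$. If $(K/I) \cap \big((A+I)/I\big) = \{0\}$, then every $a \in A$ lying in $K$ already lies in $I$, i.e. $K \cap A \subseteq I \cap A$; combined with $I \cap A \subseteq K \cap A$ this forces $K \cap A = I \cap A$, whence $K = I$ by (1), a contradiction. For $(2) \Rightarrow (4)$, assume $I \cap A \subseteq J \cap A$ and set $K = I \cap J$, a closed ideal with $K \cap A = (I \cap A) \cap (J \cap A) = I \cap A$. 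Applying (2) to the ideal $K$: if the closed ideal $I/K \subseteq B/K$ were non-zero, it would contain a non-zero element $a + K$ with $a \in A$; then $a \in I$ but $a \notin K$, contradicting $a \in I \cap A = K \cap A \subseteq K$. Hence $I = K \subseteq J$.

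For $(4) \Rightarrow (3)$, the construction above gives $I^\flat \cap A = I \cap A$, so in particular $I \cap A \subseteq I^\flat \cap A$; applying (4) to the pair $I, I^\flat$ yields $I \subseteq I^\flat$, and since always $I^\flat \subseteq I$ we conclude $I = I^\flat$, which is (3). Finally $(3) \Rightarrow (1)$ is immediate: if $I \cap A = J \cap A$ then $I = I^\flat = J^\flat = J$. I expect the two quotient arguments to be the only genuinely delicate points, where one must keep track of the fact that "$a \in A$ represents an element of $K/I$" says exactly $a \in K$ (using $I \subseteq K$). It is worth noting why the reconstruction statement (3) has to be threaded through the argument: one cannot shortcut $(1) \Rightarrow (4)$ by passing to the closed ideal generated by $I \cup J$, because an element of $\overline{I + J} \cap A$ need not decompose as a sum of elements of $I \cap A$ and $J \cap A$.
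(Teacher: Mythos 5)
Your proof is correct and takes essentially the same approach as the paper, which also closes a four-step cycle using the same two ingredients --- a quotient-plus-IIP argument and the closed ideal generated by $I \cap A$ --- merely in a different order: the paper deduces (3) from (2) by quotienting by the closed ideal generated by $I \cap A$, whereas you deduce (4) from (2) by quotienting by $I \cap J$ and then obtain (3) from (4). All the individual steps, including the bookkeeping that an element of $(A+K)/K$ lying in $I/K$ comes from an element of $I \cap A$, check out.
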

\begin{proof}
We show the implications `$(1) \Rightarrow (2) \Rightarrow (3) \Rightarrow (4) \Rightarrow (1)$'.
Assuming~(1), let us verify~(2).
Let $I \subseteq B$ be a closed ideal, and let $J \subseteq B/I$ be a non-zero, closed ideal.
Let $\pi_I \colon B \to B/I$ denote the quotient map, and set $J' := \pi_I^{-1}(J)$, which is a closed ideal of $B$ with {$I \subseteq J'$ and $I \neq J'$}. %$I \subseteq J$ and $I \neq J$.
Since $A$ separates ideals in $B$, we obtain $a \in A$ with {$a \in J'$} %$a \in J$  
but $a \notin I$.
Then $0 \neq \pi_I(a) \in J \cap \pi_I(A)$.

Assuming~(2), let us verify~(3).
Let $I \subseteq B$ be a closed ideal, and let $H$ denote the closed ideal of $B$ generated by $I \cap A$.
We clearly have $H \subseteq I$, and we need to show that $I \subseteq H$.
Let $\pi_H \colon B \to B/H$ denote the quotient map.
We have $I \cap A \subseteq H$, { and using $H \subseteq I$ a straightforward calculation will show that $\pi_H (I) \cap \pi_H ( A) = \pi_H (I \cap A) = \{0\}$.} %which implies that $\pi_H(I)\cap\pi_H(A) = \{0\}$.
Using that $\pi_H(A) \subseteq B/I$ has the IIP, we get $\pi_H(I)=\{0\}$, and so $I \subseteq H$.

Assuming~(3), let us verify~(4).
Let $I,J \subseteq B$ be closed ideals with $I \cap A \subseteq J \cap A$.
Then the closed ideal of $B$ generated by $I \cap A$ is contained in the closed ideal generated by $J \cap A$, and by assumption this implies that $I \subseteq J$.

Assuming~(4), let us verify~(1).
To show that $A$ separates ideals in $B$, let $I,J \subseteq B$ be closed ideals with $I \cap A = J \cap A$.
We need to show that $I = J$.
But the assumption immediately implies that $I \subseteq J$ and $J \subseteq I$.
\end{proof}

%==========================================================================================
It follows easily from \autoref{dfn:ISP-IIP} that both the ISP and the IIP pass to superalgebras in the following sense.

%==========================================================================================
\begin{prp}
\label{prp:Superalgebras}
Let $B$ be a \ca{}, and let $A_1 \subseteq A_2 \subseteq B$ be subalgebras.
If $A_1 \subseteq B$ has the ISP (the IIP), so does $A_2 \subseteq B$.                      
\end{prp}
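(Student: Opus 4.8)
Both assertions are immediate from \autoref{dfn:ISP-IIP} together with the inclusion $A_1 \subseteq A_2$, and the proof will simply unwind the definitions.

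For the IIP, I would take a non-zero, closed ideal $I \subseteq B$. Since $A_1 \subseteq B$ has the IIP, we have $I \cap A_1 \neq \{0\}$. As $A_1 \subseteq A_2$, we get $\{0\} \neq I \cap A_1 \subseteq I \cap A_2$, so $I \cap A_2 \neq \{0\}$, showing that $A_2 \subseteq B$ has the IIP.

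For the ISP, I would argue by contraposition. Let $I, J \subseteq B$ be closed ideals with $I \cap A_2 = J \cap A_2$. Intersecting both sides with $A_1$ and using $A_1 \subseteq A_2$ gives
\[
I \cap A_1 = (I \cap A_2) \cap A_1 = (J \cap A_2) \cap A_1 = J \cap A_1 .
\]
Since $A_1 \subseteq B$ has the ISP, this forces $I = J$. Hence $A_2 \subseteq B$ has the ISP.

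There is no real obstacle here; the only thing to be careful about is the bookkeeping identity $I \cap A_1 = (I \cap A_2) \cap A_1$, which is valid precisely because $A_1 \subseteq A_2$. (One could alternatively deduce the ISP statement from \autoref{prp:CharISPGeneral}(2) by passing to quotients $B/I$ and invoking the IIP part already proved, but the direct argument above is shorter.)
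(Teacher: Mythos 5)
Your proof is correct and is exactly the routine unwinding of \autoref{dfn:ISP-IIP} that the paper has in mind (the paper states this proposition without written proof, noting only that it "follows easily" from the definition). The key identity $I \cap A_1 = (I \cap A_2) \cap A_1$, valid since $A_1 \subseteq A_2$, is precisely what makes both parts immediate.
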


%==========================================================================================
We now turn to characterizations of the ISP and the IIP in terms of primitive ideal spaces and $*$-structure spaces. 
We recall the basic definitions and refer the reader to \cite[Chapter~10.5]{Pal01BAlg2} for further details.

For a $*$-algebra $A$ we denote by $\Pi_A^*$ the set of all kernels of topologically irreducible $*$-representations of $A$, and we equip it with the hull-kernel topology. 
This topological space is known as the \emph{$*$-structure space} of $A$. 
For any subset $S$ of $A$ the \emph{hull} of $S$ is defined as
\begin{equation*}
h(S) = \big\{ P \in \Pi_A^* \mid S \subseteq P \big\}.
\end{equation*}
For any subset $L$ of $\Pi_A^*$ the \emph{kernel} of $L$ is defined as
\begin{equation*}
k(L) = \bigcap_{Q \in L} Q \subseteq A.
\end{equation*}
We also write $h_A(S)$ for $h(S)$, and $k_A(L)$ for $k(L)$.
As the space $\Pi_A^*$ is equipped with the \emph{hull-kernel topology}, a subset $K \subseteq \Pi_A^*$ is defined to be closed if $K=h(k(K))$.
For every subset $S \subseteq A$, the set $h(S)$ is closed.

Under mild assumptions on $A$ (that are always satisfied for Banach $*$-algebras, in particular for~$L^1(G)$ for a locally compact group $G$), whenever $I \subseteq A$ is a $*$-ideal, then there is a natural homeomorphism of $\Pi_A^* \setminus h(I)$ with $\Pi_I^*$ given by the map $P \mapsto P \cap I$;
see \cite[Theorem~10.5.5]{Pal01BAlg2}.

If $A$ is a \ca{}, then the $*$-structure space coincides with the primitive ideal space, which we denote by $\Pi_A$. 
Moreover, in a \ca{} any closed ideal is the intersection of the primitive ideals containing it, so in this case we have $I = k(h(I))$ for every closed ideal $I \subseteq A$.

\medskip

Let $A \subseteq B$ be an inclusion of a dense $*$-subalgebra $A$ into a \ca{} $B$.
We will characterize the ISP and the IIP by conditions that capture to what extent~$A$ separates the primitive ideal space of $B$.
Since $A$ is dense in $B$, the restriction of a topologically irreducible $*$-representation of $B$ to $A$ is again topologically irreducible.
We therefore have a well-defined map between the $*$-structure spaces
\begin{equation}
\label{eq:structure-space-map}
\Psi \colon \Pi_B \to \Pi^*_A, \quad P \mapsto P \cap A.
\end{equation}
Given a subset $S \subseteq A$, we have
\[
\Psi^{-1}(h_A(S)) = h_B(S),
\]
which shows that $\Psi$ is continuous.
We show in \autoref{prp:CharISPDense} that~$\Psi$ is open if and only if $A \subseteq B$ has the ISP.

%==========================================================================================
\begin{lma}
\label{prp:Pre-CharISP}
Let $A \subseteq B$ be an inclusion of a dense $*$-subalgebra into a \ca{}.
Let $Y \subseteq \Pi_B$, and let $P \in \Pi_B$.
Then $\Psi(P)$ belongs to the closure of $\Psi(Y)$ in $\Pi_A^*$ if and only if $k_B(Y) \cap A \subseteq P \cap A$.
\end{lma}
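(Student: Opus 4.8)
The plan is to unwind both sides of the claimed equivalence into statements about hulls and kernels, and then observe that they coincide. Recall that $\Psi(P) = P \cap A$ lies in the closure of $\Psi(Y)$ in $\Pi_A^*$ precisely when $\Psi(P) \in h_A(k_A(\Psi(Y)))$, i.e.\ when $k_A(\Psi(Y)) \subseteq \Psi(P) = P \cap A$. So the task reduces to showing that $k_A(\Psi(Y)) = k_B(Y) \cap A$, after which the stated condition $k_B(Y) \cap A \subseteq P \cap A$ is exactly the membership in the closure. This is the conceptual core of the argument, and the rest is bookkeeping.

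To establish $k_A(\Psi(Y)) = k_B(Y) \cap A$, I would compute directly from the definitions:
\[
k_A(\Psi(Y)) = \bigcap_{P \in Y} \Psi(P) = \bigcap_{P \in Y} (P \cap A) = \Big( \bigcap_{P \in Y} P \Big) \cap A = k_B(Y) \cap A.
\]
The only subtlety is the edge case $Y = \emptyset$: then $\Psi(Y) = \emptyset$, the intersection over an empty index set is all of $A$ on the left and all of $B$ on the right, and $A = B \cap A$ holds, so the identity persists. (One might alternatively sidestep this by noting that the closure of $\emptyset$ is $\emptyset$, and $k_B(\emptyset) \cap A = B \cap A = A$ is certainly not contained in $P \cap A$ unless $A \subseteq P$, which cannot happen for a proper ideal $P$ since $A$ is dense — so both sides of the equivalence are vacuously false; either way the statement holds. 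I would simply include the computation and remark that it covers $Y=\emptyset$ as well.)

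Finally I would assemble the pieces: $\Psi(P) \in \overline{\Psi(Y)}$ iff $k_A(\Psi(Y)) \subseteq \Psi(P)$ by definition of the hull-kernel topology on $\Pi_A^*$ (closedness of $h_A(k_A(\Psi(Y)))$ is guaranteed since it is a hull), and by the identity just proved this is $k_B(Y) \cap A \subseteq P \cap A$, as desired. I do not expect any real obstacle here; the potential pitfall is purely the empty-set degenerate case, which I have addressed above, and making sure to invoke the correct characterization of closure via $K = h(k(K))$ rather than some other formulation.
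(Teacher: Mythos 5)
Your proposal is correct and follows essentially the same route as the paper's proof: both reduce the closure condition to $k_A(\Psi(Y)) \subseteq \Psi(P)$ via the hull-kernel definition and then verify the identity $k_A(\Psi(Y)) = k_B(Y) \cap A$ by direct computation. The extra care you take with the case $Y=\emptyset$ is harmless and consistent with the conventions used.
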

\begin{proof}
We first note that $\Psi(P)$ belongs to the closure of $\Psi(Y)$ in $\Pi_A^*$ if and only if $\Psi(P) \in h_A(k_A(\Psi(Y))$, that is, if and only if 
\[
k_A(\Psi(Y)) \subseteq \Psi(P) = P \cap A.
\]
We have
\[
k_B(Y) \cap A 
= \Big( \bigcap \big\{ Q : Q \in Y \big\} \Big) \cap A
= \bigcap \big\{ Q \cap A : Q \in Y \big\},
= k_A(\Psi(Y))
\]
which completes the proof.
\end{proof}

%==========================================================================================
\begin{prp}
\label{prp:CharISPDense}
Let $A \subseteq B$ be an inclusion of a dense $*$-subalgebra $A$ into a \ca{} $B$.
The following statements are equivalent:
\begin{enumerate}
\item 
The inclusion $A \subseteq B$ has the ISP.
\item 
We have $I = \overline{I \cap A}^{\Vert \cdot \Vert_B}$ for every closed ideal $I \subseteq B$. 
\item 
The map $\Psi$ of \eqref{eq:structure-space-map} is a homeomorphism onto its image.
\item 
The map $\Psi$ of \eqref{eq:structure-space-map} is open onto its image.
\item 
For every closed and proper subset $K \subseteq \Pi_B$ and every $P \in \Pi_B \setminus K$ there exists $f \in A$ such that $f \in k_B(K)$ and $f \notin P$.
\end{enumerate}
\end{prp}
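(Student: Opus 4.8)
The plan is to establish the equivalences by the cycle $(1)\Rightarrow(2)\Rightarrow(3)\Rightarrow(1)$, the side equivalence $(2)\Leftrightarrow(5)$, and the trivial implication $(3)\Rightarrow(4)$, leaving $(4)\Rightarrow(1)$ as the one substantial step. Two elementary facts will be used repeatedly. First, since $A$ is dense in $B$, for every closed ideal $I\subseteq B$ the norm-closure $\overline{I\cap A}^{\Vert\cdot\Vert_B}$ is again a (two-sided) closed ideal of $B$, and it equals the closed ideal of $B$ generated by $I\cap A$. Second, since $B$ is a \ca{}, every closed ideal $I\subseteq B$ satisfies $I=k_B(h_B(I))$; in particular a strict inclusion $\overline{I\cap A}\subsetneq I$ forces the existence of $P\in\Pi_B$ with $\overline{I\cap A}\subseteq P$ and $I\not\subseteq P$.

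By the first fact, statement~(2) is exactly condition~(3) of \autoref{prp:CharISPGeneral}, so $(1)\Leftrightarrow(2)$ is immediate from that proposition. For $(2)\Rightarrow(5)$: given a closed, proper $K\subseteq\Pi_B$ and $P\notin K$, we have $k_B(K)\not\subseteq P$ since $K=h_B(k_B(K))$, while $(2)$ gives $k_B(K)=\overline{k_B(K)\cap A}$; thus $k_B(K)\cap A\not\subseteq P$ (otherwise the closed ideal $P$ would contain $\overline{k_B(K)\cap A}=k_B(K)$), and any $f\in(k_B(K)\cap A)\setminus P$ witnesses~(5). For $(5)\Rightarrow(2)$: if $\overline{I\cap A}\subsetneq I$ for some closed ideal $I$, pick $P$ as in the second fact and set $K:=h_B(I)$; this $K$ is closed and, as $I\neq\{0\}$, proper, and $P\notin K$ because $I\not\subseteq P$, so $(5)$ yields $f\in(k_B(K)\cap A)\setminus P=(I\cap A)\setminus P$, contradicting $I\cap A\subseteq\overline{I\cap A}\subseteq P$.

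For $(2)\Rightarrow(3)$: assuming $(2)$, the map $\Psi$ is injective, since $P\cap A=Q\cap A$ implies $\overline{P\cap A}=\overline{Q\cap A}$ and hence $P=Q$ by $(2)$; and $\Psi$ is open onto $Z:=\Psi(\Pi_B)$, since for open $U\subseteq\Pi_B$ and $K:=\Pi_B\setminus U$ injectivity gives $\Psi(U)=Z\setminus\Psi(K)$, and $\Psi(K)$ is closed in $Z$: if $\Psi(P)\in\overline{\Psi(K)}^{\Pi_A^*}\cap Z$ then \autoref{prp:Pre-CharISP} gives $k_B(K)\cap A\subseteq P\cap A$, so $k_B(K)=\overline{k_B(K)\cap A}\subseteq P$ by $(2)$, whence $P\in h_B(k_B(K))=K$ and $\Psi(P)\in\Psi(K)$. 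Together with the continuity of $\Psi$, this yields $(3)$. For $(3)\Rightarrow(1)$ one runs the same computation in reverse: if $\Psi$ is a homeomorphism onto $Z$ and $I\cap A=J\cap A$ for closed ideals $I,J$, then for $P\in\Pi_B$ with $I\cap A\subseteq P$, \autoref{prp:Pre-CharISP} applied to $h_B(I)$ together with the fact that $\Psi$ carries the closed set $h_B(I)$ onto a closed subset of $Z$ forces $P\in h_B(I)$; hence $h_B(I)=h_B(I\cap A)=h_B(J\cap A)=h_B(J)$, and applying $k_B$ gives $I=J$. Finally $(3)\Rightarrow(4)$ is trivial.

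This leaves $(4)\Rightarrow(1)$, which I expect to be the main obstacle. If one already knew that $\Psi$ is injective, the implication would be easy: a continuous, injective map that is open onto its image is a homeomorphism onto its image, so $(4)$ would give $(3)$, hence $(1)$. The difficulty is precisely that injectivity of $\Psi$, while it follows from the ISP, is a priori strictly weaker and does not obviously follow from openness alone. I would therefore try to show directly that hypothesis~(4) forces $\Psi$ to be injective: assuming $P\neq Q$ in $\Pi_B$ with $P\cap A=Q\cap A$, and using that $\Pi_B$ is $T_0$ to arrange $Q\not\subseteq P$, one applies $(4)$ to the open set $\Pi_B\setminus h_B(Q)$ (which contains $P$) and couples the resulting openness with \autoref{prp:Pre-CharISP}, which pins down the position of $\Psi(P)=\Psi(Q)$ relative to the closure of $\Psi(h_B(Q))$ in $\Pi_A^*$, aiming to derive a contradiction from the tension between the open neighbourhood $\Psi(\Pi_B\setminus h_B(Q))$ of $\Psi(P)$ and the hull--kernel description of that closure. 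Equivalently, since a continuous open surjection $\Psi\colon\Pi_B\to Z$ is a quotient map, the crux is to show that under $(4)$ every closed subset of $\Pi_B$ is $\Psi$-saturated, i.e.\ that $Q\in K$ and $Q'\cap A=Q\cap A$ imply $Q'\in K$; controlling this is the heart of the argument.
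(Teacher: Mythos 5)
Your arguments for $(1)\Leftrightarrow(2)$, $(2)\Leftrightarrow(5)$, $(2)\Rightarrow(3)$, $(3)\Rightarrow(1)$ and $(3)\Rightarrow(4)$ are all correct, and apart from the direct $(3)\Rightarrow(1)$ (the paper instead closes the loop via $(4)\Rightarrow(5)\Rightarrow(1)$) they follow essentially the same route as the paper, resting on \autoref{prp:CharISPGeneral} and \autoref{prp:Pre-CharISP} in the same way. The genuine gap is the implication out of~(4), which you yourself identify as ``the main obstacle'' and then do not prove: you describe a strategy (force injectivity of $\Psi$ from openness by playing the relatively open set $\Psi(\Pi_B\setminus h_B(Q))$ against the hull--kernel closure of $\Psi(h_B(Q))$) and say you are ``aiming to derive a contradiction'', but no contradiction is actually derived. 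As written, condition~(4) is shown to follow from the other conditions but not to imply any of them, so the proposition is not proved.

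For comparison, the paper handles this by proving $(4)\Rightarrow(5)$ directly: given a closed proper $K\subseteq\Pi_B$ and $P\in\Pi_B\setminus K$, it deduces from~(4) that $\Psi(K)$ is closed relative to $\range(\Psi)$, concludes $\Psi(P)\notin h_A(k_A(\Psi(K)))$, and then extracts $f\in k_A(\Psi(K))=k_B(K)\cap A$ with $f\notin P$. Your diagnosis that the whole content of this step is the interaction between openness of $\Psi$ and its possible failure of injectivity is sound: with openness alone one only gets that $\Psi(\Pi_B\setminus K)$ is a relatively open neighbourhood of $\Psi(P)$, and combining this with \autoref{prp:Pre-CharISP} (which, if $(5)$ failed, would place $\Psi(P)$ in the $\Pi_A^*$-closure of $\Psi(K)$) produces points $Q\in K$ and $Q'\notin K$ with $Q\cap A=Q'\cap A$; one still has to rule this out. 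That is exactly the argument you sketch but do not complete, so to repair the proof you must either carry out your injectivity argument or supply the missing justification in the paper's version of $(4)\Rightarrow(5)$. Identifying where the work lies is not a substitute for doing it.
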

\begin{proof}
%We show the implications `$(1) \Rightarrow (2) \Rightarrow (3) \Rightarrow (4) \Rightarrow (5) \Rightarrow (1)$'.
Assuming~(1), let us verify~(2).
Let $I \subseteq B$ be a closed ideal. 
Since $A \subseteq B$ is dense, the closed ideal generated by $I \cap A$ is the closure of $I \cap A$.
Therefore  $I = \overline{I \cap A}$ by \autoref{prp:CharISPGeneral}(3).

\medskip		

Assuming~(2), let us verify~(3).
Observing that $P = \overline{P \cap A}$ for all $P \in \Pi_B$ we deduce that $\Psi$ is injective.
Since $\Psi$ is continuous, it remains to show that $\Psi$ maps closed subsets of $\Pi_B$ to relatively closed subsets of $\Psi(\Pi_B)$ in $\Pi^*_A$.
Let $K \subseteq \Pi_B$ be a closed subset.
To show that $\Psi(K)$ is closed inside the image of $\Psi$, let $P \in \Pi_B$ such that $\Psi(P)$ belongs to the closure of $\Psi(K)$ in $\Pi_A^*$.
Using that assumption at the first and last step, and using \autoref{prp:Pre-CharISP} at the second step, we get
\[
k_B(K)
= \overline{k_B(K) \cap A} 
\subseteq \overline{P \cap A}
= P
\]
Thus, $P \in h_B(k_B(K))$, which implies $P \in K$ since $K$ is closed.
This shows that~$\Psi(K)$ is closed relative to the image of $\Psi$.
		
\medskip		

It is clear that~(3) implies~(4).
Assuming~(4), let us verify~(5).
Suppose that $K \subseteq \Pi_B$ is a proper, closed subset, and let $P \in \Pi_B \setminus K$.
By assumption, $\Psi(K)$ is closed relative to the image of $\Psi$, and thus $\Psi(P) \notin h_A(k_A(\Psi(K)))$.
This means that $k_A(\Psi(K))$ is not a subset of $\Psi(P)$.
Choose $f \in k_A(\Psi(K))$ with $f \notin \Psi(P)$.
We have seen in the proof of \autoref{prp:Pre-CharISP} that $k_A(\Psi(K))=k_B(K) \cap A$.
Thus, we have $f \in A$ and $f\in k_B(K)$, but $f \notin \Psi(B) = P\cap A$, which implies that $f \notin P$.

\medskip		

Assuming~(5), let us verify~(1).
Let $I$ and $J$ be distinct closed ideals in $B$.
We need to show that $I \cap A \neq J \cap A$.
Without loss of generality, we may assume that $I \nsubseteq J$. 
Then $h_B(I) \nsubseteq h_B(J)$, which allows us to pick $P \in h_B(I) \setminus h_B(J)$. 
Since $h_B(J)$ is a closed subset of $\Pi_B$, we can apply the assumption to find $f \in A$ with $f \in k_B(h_B(J)) = J$ but $f \notin P$. 
Since $I \subseteq P$, we have $f \notin I$, and thus $I \cap A \neq J \cap A$. 
\end{proof}

%==========================================================================================
\begin{lma}
\label{prp:Pre-CharIIP}
Let $A \subseteq B$ be an inclusion of a dense $*$-subalgebra into a \ca{}, and let $Y \subseteq \Pi_B$.
The following statements hold:
\begin{enumerate}
\item
The subset $Y \subseteq \Pi_B$ is dense if and only if $k_B(Y)=\{0\}$.
\item
The subset $\Psi(Y) \subseteq \range(\Psi)$ is dense if and only if $k_B(Y) \cap A = \{0\}$.
\end{enumerate}
\end{lma}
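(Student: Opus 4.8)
The plan is to deduce both statements from two standard facts recalled above: that every closed ideal of a \ca{} is the intersection of the primitive ideals containing it — so that, applied to $\{0\}$, we have $k_B(\Pi_B) = k_B(h_B(\{0\})) = \{0\}$ — together with \autoref{prp:Pre-CharISP} and the identity $k_A(\Psi(Y)) = k_B(Y) \cap A$ established in its proof. No new ideas are needed; the argument is essentially a translation between the hull-kernel topology and these ideal-theoretic facts.

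For~(1), I would recall that in the hull-kernel topology the closure of $Y \subseteq \Pi_B$ is $h_B(k_B(Y))$. Hence $Y$ is dense in $\Pi_B$ if and only if $h_B(k_B(Y)) = \Pi_B$, which means that $k_B(Y) \subseteq P$ for every $P \in \Pi_B$, i.e. $k_B(Y) \subseteq k_B(\Pi_B)$. Since $k_B(\Pi_B) = \{0\}$, this is equivalent to $k_B(Y) = \{0\}$, giving~(1). For~(2), I would first observe that $\Psi(Y) \subseteq \range(\Psi)$ and that the closure of $\Psi(Y)$ inside the subspace $\range(\Psi)$ equals $\overline{\Psi(Y)} \cap \range(\Psi)$, where $\overline{\Psi(Y)}$ denotes the closure in $\Pi_A^*$. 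Thus $\Psi(Y)$ is dense in $\range(\Psi)$ if and only if $\range(\Psi) \subseteq \overline{\Psi(Y)}$, that is, if and only if $\Psi(P)$ belongs to the closure of $\Psi(Y)$ for every $P \in \Pi_B$. By \autoref{prp:Pre-CharISP} this holds precisely when $k_B(Y) \cap A \subseteq P \cap A$ for every $P \in \Pi_B$; and since $k_B(Y) \cap A \subseteq A$, the latter inclusion is equivalent to $k_B(Y) \cap A \subseteq P$. Therefore $\Psi(Y)$ is dense in $\range(\Psi)$ if and only if $k_B(Y) \cap A$ is contained in every primitive ideal of $B$, i.e. $k_B(Y) \cap A \subseteq k_B(\Pi_B) = \{0\}$. (One can also view~(1) as the special case $A = B$ of~(2), but the direct argument is shorter.)

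The only point requiring care is the bookkeeping with the subspace topology in~(2): one must distinguish density of $\Psi(Y)$ in $\range(\Psi)$ from density in the ambient space $\Pi_A^*$, which is handled by the standard description of the closure of a subset relative to a subspace. Apart from that, the proof is a routine combination of \autoref{prp:Pre-CharISP} with the fact that a \ca{} has trivial Jacobson radical, so I expect no genuine obstacle.
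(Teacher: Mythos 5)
Your proposal is correct and follows essentially the same route as the paper: part~(1) via the identity $\overline{Y} = h_B(k_B(Y))$ together with $k_B(\Pi_B)=\{0\}$, and part~(2) by combining \autoref{prp:Pre-CharISP} with the fact that the primitive ideals of $B$ intersect to $\{0\}$. The extra bookkeeping you do with the subspace topology and the reduction of $k_B(Y)\cap A \subseteq P\cap A$ to $k_B(Y)\cap A \subseteq P$ is exactly what the paper's argument implicitly uses.
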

\begin{proof}
(1)
The closure of $Y$ in $\Pi_B$ is $h_B(k_B(Y))$.
Thus, $Y \subseteq \Pi_B$ is dense if and only if $h_B(k_B(Y))=\Pi_B$.
Since $B$ is a \ca{}, we have $k_B(\Pi_B)=\{0\}$.
It follows that $h_B(k_B(Y))=\Pi_B$ if and only if $k_B(Y) = \{0\}$.

(2)
Given $P \in \Pi_B$, the element $\Psi(P)$ belongs to the closure of $\Psi(Y)$ if and only $k_B(Y) \cap A \subseteq P \cap A$;
see \autoref{prp:Pre-CharISP}.
Thus, $\Psi(Y) \subseteq \range(\Psi)$ is dense if and only if $k_B(Y) \cap A \subseteq P \cap A$ for every $P \in \Pi_B$, which in turn is equivalent to $k_B(Y) \cap A = \{0\}$ since $\bigcap\{P : P \in \Pi_B\} = \{0\}$.
\end{proof}

%==========================================================================================
\begin{prp}
\label{prp:CharIIP}
Let $A \subseteq B$ be an inclusion of a dense $*$-subalgebra $A$ into a \ca{} $B$.
The following statements are equivalent:
\begin{enumerate}
\item 
The inclusion $A \subseteq B$ has the IIP.
\item
A subset $Y \subseteq \Pi_B$ is dense if (and only if) $\Psi(Y) \subseteq \range(\Psi)$ is dense.
\item 
For every closed and proper subset $K \subseteq \Pi_B$ there exists a non-zero $f \in A$ such that $f \in k_B(K)$. 
\end{enumerate}
\end{prp}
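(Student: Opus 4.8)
The plan is to reduce both equivalences to the hull--kernel correspondence between closed ideals of $B$ and closed subsets of $\Pi_B$, together with \autoref{prp:Pre-CharIIP}. First I would record the facts I need: since $B$ is a \ca{}, every closed ideal $I \subseteq B$ satisfies $I = k_B(h_B(I))$, every closed subset $K \subseteq \Pi_B$ satisfies $K = h_B(k_B(K))$, and $h_B(I) = \Pi_B$ holds exactly when $I = \{0\}$ (because $k_B(\Pi_B) = \{0\}$). Consequently $I \mapsto h_B(I)$ is an inclusion-reversing bijection from closed ideals of $B$ onto closed subsets of $\Pi_B$ that carries the \emph{non-zero} closed ideals onto the \emph{proper} closed subsets.

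Next I would prove $(1) \Leftrightarrow (3)$, which then becomes a direct translation. Given a non-zero closed ideal $I \subseteq B$, the set $K := h_B(I)$ is a proper closed subset with $k_B(K) = I$; conversely, given a proper closed subset $K \subseteq \Pi_B$, the ideal $I := k_B(K)$ is a non-zero closed ideal with $h_B(I) = K$. Under this matching, the assertion of $(3)$ that $k_B(K) \cap A$ contains a non-zero element for every proper closed $K$ is word for word the assertion of $(1)$ that $I \cap A \neq \{0\}$ for every non-zero closed ideal $I$.

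For $(1) \Leftrightarrow (2)$ I would first observe that the implication hidden in the parentheses of $(2)$ is free: $\Psi$ is continuous, so a dense $Y \subseteq \Pi_B$ automatically has dense image in $\range(\Psi) = \Psi(\Pi_B)$. Thus $(2)$ amounts to the single implication that $\Psi(Y)$ dense in $\range(\Psi)$ forces $Y$ dense in $\Pi_B$. By \autoref{prp:Pre-CharIIP}, its hypothesis reads $k_B(Y) \cap A = \{0\}$ and its conclusion reads $k_B(Y) = \{0\}$. Since every closed ideal of $B$ is of the form $k_B(Y)$ for some $Y \subseteq \Pi_B$ (take $Y = h_B(I)$), quantifying over all such $Y$ shows $(2)$ is equivalent to the statement that every closed ideal $I \subseteq B$ with $I \cap A = \{0\}$ is zero, which is exactly $(1)$.

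I do not anticipate a genuine obstacle; the argument is bookkeeping on the hull--kernel correspondence. The two points I would be careful about are that the ``only if'' half of $(2)$ is the trivial direction, so no work is needed there, and that every closed ideal of $B$ really does arise as $k_B(Y)$, which is where the hypothesis that $B$ is a \ca{} enters.
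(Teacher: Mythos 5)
Your proposal is correct and uses the same essential ingredients as the paper: \autoref{prp:Pre-CharIIP} together with the hull--kernel correspondence between non-zero closed ideals of $B$ and proper closed subsets of $\Pi_B$. The only difference is organizational --- you prove the two equivalences $(1)\Leftrightarrow(3)$ and $(1)\Leftrightarrow(2)$ separately, while the paper runs the cycle $(1)\Rightarrow(2)\Rightarrow(3)\Rightarrow(1)$ --- and both routes are equally valid.
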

\begin{proof}
%We show the implications `$(1) \Rightarrow (2) \Rightarrow (3) \Rightarrow (1)$'.
Assuming~(1), let us verify~(2).
Let $Y \subseteq \Pi_B$.
In general, if $Y$ is dense in $\Pi_B$, then $\Psi(Y)$ is dense in the image of $\Psi$.
To show the converse, assume that $\Psi(Y)$ is dense in $\range(\Psi)$.
Applying \autoref{prp:Pre-CharIIP}, we get $k_B(Y) \cap A = \{0\}$.
Using that $A \subseteq B$ has the IIP, we deduce that $k_B(Y)=\{0\}$.
Applying \autoref{prp:Pre-CharIIP} again, we get that~$Y$ is dense in $\Pi_B$.

Assuming~(2), let us verify~(3).
Let $K \subseteq \Pi_B$ be a closed and proper subset. 
The assumption implies that $\Psi(K)$ is not dense in $\range(\Psi)$.
By \autoref{prp:Pre-CharIIP} we get $k_B(K) \cap A \neq \{0\}$, which shows that there exists a non-zero $f \in A$ with $f \in k_B(K)$.

Assuming~(3), let us verify~(1).
Suppose that $I \subseteq B$ is a non-zero, closed ideal. 
Then $h_B(I) \subseteq \Pi_B$ is a closed and proper subset.
By assumption, we find a non-zero element $f \in A$ with $f \in k_B(h_B(I)) = I$.
Thus, $I \cap A \neq \{0\}$.
\end{proof}
	
%==========================================================================================
We will have use for relating the ISP and the IIP for an inclusion $A \subseteq B$ of a dense $*$-subalgebra into a \ca{} to statements about $C^*$-seminorms on the $*$-algebra $A$. 
In order to state and prove our next result, let us introduce some notation. 
Denote by $R_A^B$ the collection of $*$-representations of $A$ which extend to $*$-representations of the \ca{} $B$. 
Assuming that $A$ admits a $C^*$-envelope $C^*(A)$ (then $A$ is called a $G^*$-algebra 
\cite[Definition~10.1.1]{Pal01BAlg2}), we may identify~$R_A^B$ with
\[
\big\{ \pi \in R_A^{C^*(A)} \mid \ker(\pi_\ast) \supseteq \ker (C^*(A) \to B) \big\}
\]
where $\pi_\ast$ is the natural extension of $\pi$ to $C^*(A)$, and $C^*(A) \to B$ is the canonical surjection. {Indeed, this identification is justified by observing that by the universal property of $C^*(A)$, $\pi \in R_A^B$ if and only if $\pi_\ast$ factors through the surjection $C^*(A) \to B$. }
	
%==========================================================================================
\begin{prp}
\label{prp:norm-reformulations}
Let $A \subseteq B$ be an inclusion of a dense $*$-subalgebra $A$ into a \ca{} $B$.
For $\pi \in R_A^B$, denote by $\overline{\pi}$ the extension to $B$. 
The following statements hold:
\begin{enumerate}
\item 
The inclusion $A \subseteq B$ has the ISP if and only if for every $\pi, \varrho \in R_A^B$ the containment $\ker(\pi) \subseteq \ker(\varrho)$ in $A$ implies $\Vert\overline{\varrho}(b) \Vert \leq \Vert \overline{\pi}(b)\Vert$ for all $b \in B$.
\item 
The inclusion $A \subseteq B$ has the IIP if and only if for every $\pi \in R_A^B$, the statement $\ker(\pi) = \{0\}$ in $A$ implies $\Vert \overline{\pi}(b) \Vert = \Vert b\Vert_B$ for all $b \in B$. 
\end{enumerate}
\end{prp}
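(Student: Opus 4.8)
The plan is to translate both statements into the ideal-theoretic language already developed and then apply \autoref{prp:CharISPGeneral}(4) for the ISP and \autoref{dfn:ISP-IIP} for the IIP. Two elementary facts set up the needed dictionary. First, since every $*$-representation of a \ca{} is contractive, each $\pi \in R_A^B$ extends to a \emph{unique} $*$-representation $\overline{\pi}$ of $B$ (uniqueness follows from density of $A$), and $\ker(\pi) = \ker(\overline{\pi}) \cap A$; in particular $\ker(\overline{\pi})$ is a closed ideal of $B$. Second, using that a $*$-homomorphism of \ca{s} is isometric precisely when it is injective and that quotient maps of \ca{s} are contractive, one obtains, for $\pi, \varrho \in R_A^B$ (with $b$ ranging over $B$),
\[
\Vert \overline{\varrho}(b) \Vert \le \Vert \overline{\pi}(b) \Vert \ (\forall\, b) \iff \ker(\overline{\pi}) \subseteq \ker(\overline{\varrho}),
\quad
\Vert \overline{\pi}(b) \Vert = \Vert b \Vert_B \ (\forall\, b) \iff \ker(\overline{\pi}) = \{0\}.
\]
For the forward direction of the first equivalence, $b \in \ker(\overline{\pi})$ forces $\overline{\varrho}(b) = 0$; for the reverse direction, factor $\overline{\pi}$ and $\overline{\varrho}$ through $B/\ker(\overline{\pi})$ and $B/\ker(\overline{\varrho})$, on which they become isometric, and use that the canonical surjection $B/\ker(\overline{\pi}) \to B/\ker(\overline{\varrho})$ is contractive.

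With this dictionary, statement~(1) reads: $A \subseteq B$ has the ISP if and only if, for all $\pi, \varrho \in R_A^B$, the inclusion $\ker(\overline{\pi}) \cap A \subseteq \ker(\overline{\varrho}) \cap A$ implies $\ker(\overline{\pi}) \subseteq \ker(\overline{\varrho})$. The forward implication is immediate from \autoref{prp:CharISPGeneral}(4) applied to the closed ideals $\ker(\overline{\pi})$ and $\ker(\overline{\varrho})$. For the converse I would check condition~(4) of \autoref{prp:CharISPGeneral}: given closed ideals $I, J \subseteq B$ with $I \cap A \subseteq J \cap A$, choose faithful $*$-representations $\sigma_I$ of $B/I$ and $\sigma_J$ of $B/J$ and put $\overline{\pi} := \sigma_I \circ q_I$ and $\overline{\varrho} := \sigma_J \circ q_J$, where $q_I \colon B \to B/I$ and $q_J \colon B \to B/J$ are the quotient maps; then $\ker(\overline{\pi}) = I$, $\ker(\overline{\varrho}) = J$, the restrictions $\pi := \overline{\pi}|_A$ and $\varrho := \overline{\varrho}|_A$ lie in $R_A^B$ with $\ker(\pi) = I \cap A \subseteq J \cap A = \ker(\varrho)$, and the hypothesis gives $I \subseteq J$. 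Statement~(2) is handled identically: the IIP is equivalent, contrapositively, to the assertion that a closed ideal $I \subseteq B$ with $I \cap A = \{0\}$ must be zero, which translates under the dictionary into ``$\ker(\pi) = \{0\}$ forces $\ker(\overline{\pi}) = \{0\}$'', i.e. into the asserted norm equality; for the converse one again realizes a given $I$ with $I \cap A = \{0\}$ as $\ker(\overline{\pi})$ for $\pi = (\sigma_I \circ q_I)|_A$ and concludes $I = \{0\}$.

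The computations are routine and I do not expect a genuine obstacle. The two points that want a little care — rather than posing a difficulty — are the well-definedness of ``the extension $\overline{\pi}$'', which rests on density of $A$ together with automatic continuity of $*$-representations of \ca{s}, and, for the converse implications in both parts, the realization of an arbitrary closed ideal $I \subseteq B$ as the kernel of a representation whose restriction to $A$ lies in $R_A^B$, which is achieved by composing a faithful representation of $B/I$ with the quotient map $B \to B/I$. Everything else is the standard correspondence between kernels of representations, quotients, and norms in \ca{s}.
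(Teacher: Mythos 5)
Your proof is correct and follows essentially the same route as the paper: both reduce the norm inequalities to kernel containments via the standard fact that a $*$-homomorphism of \ca{s} factors isometrically through its kernel, then apply condition~(4) of \autoref{prp:CharISPGeneral}, realizing an arbitrary closed ideal as the kernel of a representation of $B$ restricted to $A$ for the converse directions. The only difference is that you spell out a few details (uniqueness of the extension, the construction of the representation via a faithful representation of $B/I$) that the paper leaves implicit.
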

\begin{proof}
We begin by proving (1). 
We first note that $\pi, \varrho \in R_A^B$ satisfy $\ker(\overline{\pi}) \subseteq \ker(\overline{\rho})$ if and only if $\Vert\overline{\varrho}(b) \Vert \leq \Vert \overline{\pi}(b)\Vert$ for all $b \in B$.

Suppose now that $A \subseteq B$ has the ISP, and let $\pi, \varrho \in R_A^B$ satisfy $\ker(\pi) \subseteq \ker(\varrho)$ in $A$.
Then
\[
\ker(\overline{\pi}) \cap A
= \ker(\pi)
\subseteq \ker(\varrho)
= \ker(\overline{\varrho}) \cap A.
\]
By \autoref{prp:CharISPGeneral}, we get $\ker(\overline{\pi}) \subseteq \ker(\overline{\varrho})$, as desired.

Conversely, suppose that $\ker(\pi) \subseteq \ker(\varrho)$ in $A$ implies $\ker(\overline{\pi}) \subseteq \ker(\overline{\varrho})$ for all $\pi, \varrho \in R_A^B$.
To show that $A \subseteq B$ has the ISP, we verify statements~(4) in \autoref{prp:CharISPGeneral}.
Let $I,J \subseteq B$ be closed ideals satisfying $I \cap A \subseteq J \cap A$.
Choose $*$-representations~$\alpha$ and~$\beta$ of $B$ with $I=\ker(\alpha)$ and $J=\ker(\beta)$.
Set $\pi=\alpha|_A$ and $\varrho=\beta|_A$, which belong to $R_A^B$ and satisfy $\alpha=\overline{\pi}$ and $\beta=\overline{\varrho}$.
We have
\[
\ker(\pi)
= \ker(\overline{\pi}) \cap A
= I \cap A
\subseteq J \cap A
= \ker(\overline{\varrho}) \cap A
=\ker(\varrho).
\]
By assumption, we get $\ker(\overline{\pi}) \subseteq \ker(\overline{\varrho})$, and thus
\[
I = \ker(\overline{\pi}) \subseteq \ker(\overline{\varrho}) = J.
\]

Statement~(2) is shown similarly.
\end{proof}

%==========================================================================================
%==========================================================================================
\section{Permanence properties of the ISP and IIP for groups}
\label{sec:perm}

%==========================================================================================
In this section we investigate permanence of the ideal separation property (ISP) and the ideal intersection property (IIP)  for locally compact groups when passing to quotients (\autoref{prp:Quotient}), inductive limits (\autoref{prp:DirectLimit}) and inverse limits (\autoref{prp:InvLim}).
We also show that a locally compact group has the ISP (the IIP) if sufficiently many of its closed subgroups do (\autoref{prp:ApproxByClosedSubgroups}), and in particular that the ISP and the IIP are local properties (\autoref{prp:LocalProps}).

\medskip

%==========================================================================================
Every locally compact group will be assumed to come equipped with a fixed left Haar measure. 
Recall from \autoref{dfn:ISP-IIP-groups} that a locally compact group $G$ is said to have the ISP or the IIP if this property holds for the inclusion $L^1(G) \subseteq C^*_\red(G)$.

The next result generalizes results about the passage of $*$-regularity and $C^*$-uniqueness to quotient groups;
see \autoref{prp:StarRegQuotient}.

%==========================================================================================
\begin{prp}
\label{prp:Quotient}
Let $G$ be a locally compact group, and suppose $N \subseteq G$ is a closed, normal subgroup. 
The following statements hold:
\begin{enumerate}
\item 
If $G$ has the ISP and $N$ is amenable, then $G/N$ has the ISP.
\item 
If $G$ has the IIP and $N$ is compact, then $G/N$ has the IIP. 
\end{enumerate}
\end{prp}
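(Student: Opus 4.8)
The plan is to use the quotient map $q\colon G \to G/N$ to produce a surjection $C^*_\red(G) \to C^*_\red(G/N)$, at least when $N$ is sufficiently nice, and then transfer the ISP/IIP along this surjection. For the ISP in~(1), since $N$ is amenable, weak containment for $N$ allows one to identify $C^*_\red(G/N)$ with a quotient of $C^*_\red(G)$; concretely, the integration map $L^1(G) \to L^1(G/N)$ (integrating over $N$-cosets against the Haar measure of $N$) extends to a $*$-homomorphism $\Phi\colon C^*_\red(G) \to C^*_\red(G/N)$ that is surjective and whose restriction to $L^1(G)$ has dense range $L^1(G/N)$. I would cite the standard fact (amenability of $N$ implies the canonical map $C^*_\red(G) \to C^*_\red(G/N)$ is well-defined and surjective, e.g. via the conditional-expectation / weak-containment argument) rather than reprove it.

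Granting this surjection $\Phi\colon C^*_\red(G) \twoheadrightarrow C^*_\red(G/N)$ with $\Phi(L^1(G)) = L^1(G/N)$ (dense $*$-subalgebra), the argument for~(1) is then a general nonsense lemma about surjections: if $A \subseteq B$ has the ISP and $\Phi\colon B \to C$ is a surjection of \ca{s} with $\Phi(A)$ a subalgebra of $C$, then $\Phi(A) \subseteq C$ has the ISP. To see this, use the characterization \autoref{prp:CharISPGeneral}(4): given closed ideals $I, J \subseteq C$ with $I \cap \Phi(A) \subseteq J \cap \Phi(A)$, pull back to $\Phi^{-1}(I), \Phi^{-1}(J) \subseteq B$, which are closed ideals containing $\ker\Phi$; one checks $\Phi^{-1}(I) \cap A \subseteq \Phi^{-1}(J) \cap A$ using surjectivity of $\Phi|_A$ onto $\Phi(A)$ (here one needs that $a \in \Phi^{-1}(I)$, $a \in A$ forces $\Phi(a) \in I \cap \Phi(A) \subseteq J$, hence $a \in \Phi^{-1}(J)$), apply the ISP of $A \subseteq B$ to get $\Phi^{-1}(I) \subseteq \Phi^{-1}(J)$, and push forward to $I \subseteq J$. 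For~(2), the analogous general lemma holds for the IIP: a nonzero closed ideal $I \subseteq C$ pulls back to a closed ideal $\Phi^{-1}(I) \supsetneq \ker\Phi$ of $B$; since $A \subseteq B$ has the IIP there is a nonzero $a \in A \cap \Phi^{-1}(I)$, but one must further ensure $\Phi(a) \neq 0$, i.e.\ $a \notin \ker\Phi$ — this requires knowing $\ker\Phi \cap A = \{0\}$.

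This last point is exactly where the hypotheses diverge and where the main obstacle lies. For the IIP we need $\ker\Phi \cap L^1(G) = \{0\}$, which is false for general amenable $N$ but holds when $N$ is \emph{compact}: then the averaging map $L^1(G) \to L^1(G/N)$ is injective on the subspace of functions constant on $N$-cosets, and more to the point one shows directly that $\ker\bigl(C^*_\red(G)\to C^*_\red(G/N)\bigr) \cap L^1(G) = \{0\}$ because a nonzero $L^1$-function cannot integrate to zero on every coset after suitable translation — the cleanest route is to observe that for compact $N$ the map $C^*_\red(G) \to C^*_\red(G/N)$ is a corner-type or expectation-compressed quotient whose kernel is generated by $\{p - \tau(p)\}$ type relations avoiding $L^1(G)$; alternatively, pass through the minimal-tensor-product picture $C^*_\red(G) \cong$ (roughly) $C^*_\red(G/N)$-algebra with $N$ compact. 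I would handle this by first treating the extreme case $G = N$ compact (where $C^*_\red(G) = C^*(G)$ is a direct sum of matrix algebras and the claim is transparent), then the general compact-$N$ case by a Fell-bundle / induction-in-stages decomposition, or simply by citing the known preservation of $C^*$-uniqueness under compact normal quotients and running the relative version. The ISP case needs only $\ker\Phi \cap L^1(G) \subseteq$ something trivial modulo the ideal structure, which the surjection lemma absorbs automatically, so no injectivity on $L^1$ is required there — that asymmetry is precisely why amenable suffices for~(1) but compact is needed for~(2).

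I would organize the write-up as: first isolate the two surjection lemmas (ISP and IIP pass to images under \ca{} surjections, the IIP one carrying the extra hypothesis $\ker\Phi \cap A = \{0\}$), proved via \autoref{prp:CharISPGeneral} and \autoref{dfn:ISP-IIP} respectively; then verify the existence of $\Phi$ with the required properties in the two cases ($N$ amenable for surjectivity of $C^*_\red(G) \to C^*_\red(G/N)$ and density of the image of $L^1(G)$; $N$ compact for the additional $\ker\Phi \cap L^1(G) = \{0\}$); then combine. The genuinely nontrivial ingredient — and the one I would be most careful to state precisely or reference — is the identification of $C^*_\red(G/N)$ as a quotient of $C^*_\red(G)$ for amenable $N$, together with the kernel computation for compact $N$.
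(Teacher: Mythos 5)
Your argument for part~(1) is correct and is essentially the paper's: both reduce to the observation that for a surjection $\pi \colon C^*_\red(G) \to C^*_\red(G/N)$ compatible with the integration map $\varrho \colon L^1(G) \to L^1(G/N)$, one has $\pi^{-1}(I) \cap L^1(G) = \varrho^{-1}(I \cap L^1(G/N))$, so containment of traces downstairs pulls back to containment of traces upstairs, and the ISP of $G$ finishes the job. No complaints there.

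Part~(2) has a genuine gap. Your pivotal claim --- that $\ker\bigl(C^*_\red(G) \to C^*_\red(G/N)\bigr) \cap L^1(G) = \{0\}$ when $N$ is compact --- is false whenever $N$ is nontrivial. The kernel of $\varrho$ is $(1-z)L^1(G)$, where $z \in M(G)$ is the central idempotent given by the normalized Haar measure of $N$; already for $G = N$ a nontrivial compact group, $\varrho$ is integration against Haar measure and its kernel consists of all mean-zero functions. So the obstruction you correctly identified (a nonzero $a \in L^1(G) \cap \pi^{-1}(I)$ produced by the IIP of $G$ might die under $\pi$) cannot be dismissed this way, and the alternatives you sketch (``corner-type quotient whose kernel avoids $L^1(G)$'', Fell-bundle decomposition, or citing the known $C^*$-uniqueness result) are either resting on the same false premise or are too vague to close the gap. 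The paper's fix is the one your ``corner-type'' remark gestures at, but used differently: the image $\varphi(z)$ of $z$ in $M(C^*_\red(G))$ is a central projection with $\ker(\pi) = (1-\varphi(z))C^*_\red(G)$, so $C^*_\red(G/N)$ is the direct summand $\varphi(z)C^*_\red(G)$. One then applies the IIP of $G$ not to the full preimage $\pi^{-1}(I)$ but to the cut-down ideal $J := \varphi(z)\pi^{-1}(I)$, which still surjects onto $I$; since $J \cap L^1(G) \subseteq zL^1(G)$ and $\varrho$ \emph{is} injective on $zL^1(G)$, the nonzero element of $J \cap L^1(G)$ survives into $I \cap L^1(G/N)$. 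The point is not that the kernel misses $L^1(G)$, but that one can choose the upstairs ideal to miss the kernel.
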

\begin{proof}
Using that $N$ is a closed, normal subgroup, we obtain a well-defined map $\varrho \colon L^1 (G) \to L^1 (G/N)$ given by
\begin{align*}
\varrho(f)(xN) = \int_N f(xn) dn
\end{align*}
for $f \in L^1(G)$ and $n \in N$.
By \cite[Proposition~3.4.5 and Theorem~3.5.4]{ReiSte00HarmAna}, $\varrho$ is a surjective $*$-homomorphism. 

Assuming that $N$ is also amenable (which holds under the assumption of~(1), as well as of~(2)), the map $\varrho$ extends to a surjective $*$-homomorphism $\pi \colon  C^*_\red(G) \to  C^*_\red(G/N)$.
(This is folklore but the details can be found in e.g. \cite[Corollary~2.4]{Man21ExactVsCExact}). 
We obtain the following commutative diagram:
\[
\xymatrix{
L^1( G ) \ar@{^{(}->}[r] \ar@{->>}[d]_{\varrho}
& C^*_\red(G) \ar@{->>}[d]^{\pi} \\
L^1(G/N) \ar@{^{(}->}[r]
& C^*_\red(G/N).
}
\]
We have
\[
\pi(\pi^{-1}(I) \cap L^1(G)) = I \cap L^1(G/N), \andSep
\pi^{-1}(I) \cap L^1(G) = \pi^{-1}( I \cap L^1(G/N) )
\]
for every closed ideal $I \subseteq C^*_\red(G/N)$.

\medskip

(1)
Assuming that $G$ has the ISP and that $N$ is amenable, we verify that~$G/N$ has the ISP.
Let $I,J \subseteq  C^*_\red(G/N)$ be closed ideals with $I \cap L^1(G/N) \subseteq J \cap L^1(G/N)$.
Then
\[
\pi^{-1}(I) \cap L^1(G) 
= \pi^{-1}( I \cap L^1(G/N) )
\subseteq \pi^{-1}( I \cap L^1(G/N) )
= \pi^{-1}(I) \cap L^1(G) 
\]
Applying that $G$ has the ISP, we get $\pi^{-1}(I) \subseteq \pi^{-1}(J)$, and thus $I \subseteq J$.

\medskip

(2)
Assuming that $G$ has the IIP and that $N$ is compact, we verify that~$G/N$ has the IIP.
We first identify $G^*_\red(G/N)$ with a direct summand of $C^*_\red(G)$.
For full group \ca{s} a similar result is contained in \cite[Lemma~5.2]{Lip72RepThyAlmCtdGps}.

The normalized Haar measure on $N$ defines a central idempotent $z$ in the measure algebra $M(G)$, which we identify with the multiplier algebra $M(L^1(G))$ of~$L^1(G)$.
We have $\ker(\varrho) = (1-z)L^1(G)$.
Using that $L^1(G)$ contains a contractive approximate identity for $C^*_\red(G)$, we obtain a natural contractive $*$-homomorphism $\varphi \colon M(L^1(G)) \to M(C^*_\red(G))$.
Then $\varphi(z)$ is a central projection in~$M(C^*_\red(G))$, and we obtain that $\ker(\pi) = (1-\varphi(z))C^*_\red(G)$.
This means that $\varrho$ and $\pi$ induce bijections 
\[
zL^1(G) \to L^1(G/N), \andSep
\varphi(z)C^*_\red(G) \to C^*_\red(G/N).
\]

To verify that $G/N$ has the IIP, let $I \subseteq  C^*_\red(G/N)$ be a non-zero, closed ideal, and set
\[
J := \varphi(z)\pi^{-1}(I).
\]
Then $J$ is a closed ideal in $C^*_\red(G)$ with $\pi(J)=I$.
Since $G$ has the IIP, it follows that $J \cap L^1(G)$ is non-zero.
Since $J \cap L^1(G)$ is contained in $zL^1(G)$, and since $\varrho$ is injective on $zL^1(G)$, it follows that 
\[
\{0\} 
\neq \varrho(J \cap L^1(G))
\subseteq I \cap L^1(G/N),
\]
as desired.
\end{proof}

%==========================================================================================
The following result is (implicitly) contained in \cite{Boi82CtdGpsPolyDual} and \cite{Boi84GpAlgsUniqueCNorm}.
We note that it is not clear if $C^*$-uniqueness passes to quotients by closed, normal subgroups, even in the case of discrete groups;
see \cite[Remark~3.6]{LeuNg04PermPropCUniqueGps}.

%==========================================================================================
\begin{cor}
\label{prp:StarRegQuotient}
Let $G$ be a locally compact group, and suppose $N \subseteq G$ is a closed, normal subgroup. 
The following statements hold:
\begin{enumerate}
\item 
If $G$ is $*$-regular, then $G/N$ is $*$-regular.
\item 
If $G$ is $C^*$-unique and $N$ is compact, then $G/N$ is $C^*$-unique. 
\end{enumerate}
\end{cor}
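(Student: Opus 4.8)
The plan is to deduce \autoref{prp:StarRegQuotient} directly from \autoref{prp:Quotient} together with \autoref{prp:CharStarReg}, which characterizes $*$-regularity as ``amenable $+$ ISP'' and $C^*$-uniqueness as ``amenable $+$ IIP''. So the argument factors into two independent ingredients: the behavior of amenability under quotients, and the behavior of the ISP/IIP under quotients (the latter being exactly \autoref{prp:Quotient}).

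For statement~(1): if $G$ is $*$-regular, then by \autoref{prp:CharStarReg}(1) $G$ is amenable and has the ISP. Amenability passes to quotients by arbitrary closed normal subgroups, so $G/N$ is amenable; in particular $N$ itself is amenable as a closed subgroup of an amenable group. Hence \autoref{prp:Quotient}(1) applies and $G/N$ has the ISP. Applying \autoref{prp:CharStarReg}(1) in the reverse direction, $G/N$ is $*$-regular. For statement~(2): if $G$ is $C^*$-unique, then by \autoref{prp:CharStarReg}(2) $G$ is amenable and has the IIP; again $G/N$ is amenable, and with the additional hypothesis that $N$ is compact, \autoref{prp:Quotient}(2) gives that $G/N$ has the IIP, whence $G/N$ is $C^*$-unique by \autoref{prp:CharStarReg}(2).

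There is essentially no obstacle here: the only facts used beyond the already-proved \autoref{prp:Quotient} are the permanence of amenability under quotients (a standard fact) and the characterizations in \autoref{prp:CharStarReg}. One small point worth stating explicitly in the writeup is why $N$ is amenable in part~(1) --- it is a closed subgroup of the amenable group $G$ --- since \autoref{prp:Quotient}(1) is phrased with ``$N$ amenable'' as a hypothesis rather than ``$G$ amenable''. I would write the proof in two short paragraphs, one per statement, each reducing to the cited results.

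\begin{proof}
Recall from \autoref{prp:CharStarReg} that a locally compact group is $*$-regular if and only if it is amenable and has the ISP, and is $C^*$-unique if and only if it is amenable and has the IIP. Recall also that amenability passes to closed subgroups and to quotients by closed, normal subgroups.

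(1)
Suppose $G$ is $*$-regular. Then $G$ is amenable and has the ISP. Since $N$ is a closed subgroup of the amenable group $G$, it is amenable, so \autoref{prp:Quotient}(1) applies and shows that $G/N$ has the ISP. As $G/N$ is a quotient of the amenable group $G$, it is amenable. Hence $G/N$ is $*$-regular.

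(2)
Suppose $G$ is $C^*$-unique. Then $G$ is amenable and has the IIP. Since $N$ is compact, \autoref{prp:Quotient}(2) applies and shows that $G/N$ has the IIP. As $G/N$ is a quotient of the amenable group $G$, it is amenable. Hence $G/N$ is $C^*$-unique.
\end{proof}
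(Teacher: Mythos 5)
Your proof is correct and follows the same route as the paper: combining \autoref{prp:CharStarReg} with \autoref{prp:Quotient} and the permanence of amenability under quotients. Your explicit remark that $N$ is amenable as a closed subgroup of the amenable group $G$ in part~(1) is a useful detail that the paper's proof leaves implicit.
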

\begin{proof}
As noted in \autoref{prp:CharStarReg}, a locally compact group is $*$-regular ($C^*$-unique) if and only if it is amenable and has the ISP (the IIP).
Using that amenability passes to quotients by closed, normal subgroups (\cite[Proposition~1.13]{Pat88Amen}), the result follows from \autoref{prp:Quotient}.
\end{proof}

%==========================================================================================
The next result shows that a locally compact group has the ISP (the IIP) if `sufficiently' many closed subgroups do.
Analogous results were shown for $*$-regularity in \cite[Theorem~1]{Boi82StarRegSomeSolvGps}, and for $C^*$-uniqueness (implicitly) in \cite[Proposition~2.1]{LeuNg04PermPropCUniqueGps}.

%==========================================================================================
\begin{thm}
\label{prp:ApproxByClosedSubgroups}
Let $G$ be a locally compact group, and let $(H_i)_{i \in I}$ be a family of closed subgroups of~$G$ such that every open, relatively compact subset of~$G$ is contained in some $H_i$.
The following statements hold:
\begin{enumerate}
\item 
If every $H_i$ has the ISP, then $G$ has the ISP.
\item 
If every $H_i$ has the IIP, then $G$ has the IIP. 
\end{enumerate}
\end{thm}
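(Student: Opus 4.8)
The plan is to deduce both parts from the $C^*$-seminorm characterizations of the ISP and the IIP in \autoref{prp:norm-reformulations}, after first reducing to the situation where the subgroups $H_i$ are \emph{open} in $G$. The point is that any non-empty open, relatively compact subset of $G$ is contained in some $H_i$, and a subgroup of $G$ containing a non-empty open set is itself open; since subgroups containing no non-empty open set are irrelevant to the hypothesis, we may discard them and assume every $H_i$ is open. For an open subgroup $H \leq G$, extension by zero is an injective $*$-homomorphism $L^1(H) \hookrightarrow L^1(G)$, and — using that $\lambda_G|_H$ decomposes over the right cosets in $H \backslash G$ as $\bigoplus_{H\backslash G}\lambda_H$ — it extends to an isometric $*$-embedding $\iota_H\colon C^*_\red(H)\hookrightarrow C^*_\red(G)$ compatible with the inclusion on $L^1(H)$. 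A final elementary observation, using local compactness, is that the (compact) support of any $f \in C_c(G)$ is contained in an open, relatively compact subset of $G$, hence in some $H_i$; thus $C_c(G)=\bigcup_i \iota_{H_i}(C_c(H_i))$, so $\bigcup_i \iota_{H_i}\big(C^*_\red(H_i)\big)$ is dense in $C^*_\red(G)$.

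With this in place, I would prove (1) as follows. Assume every $H_i$ has the ISP, and let $\pi,\varrho \in R_{L^1(G)}^{C^*_\red(G)}$ with $\ker\pi\subseteq\ker\varrho$ in $L^1(G)$; by \autoref{prp:norm-reformulations}(1) it suffices to prove $\Vert\overline{\varrho}(b)\Vert\leq\Vert\overline{\pi}(b)\Vert$ for all $b\in C^*_\red(G)$. For each $i$, the $*$-representations $\overline{\pi}\circ\iota_{H_i}$ and $\overline{\varrho}\circ\iota_{H_i}$ of $C^*_\red(H_i)$ restrict on $L^1(H_i)$ to $\pi|_{L^1(H_i)}$ and $\varrho|_{L^1(H_i)}$; thus the latter belong to $R_{L^1(H_i)}^{C^*_\red(H_i)}$, with the former as their extensions. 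Intersecting $\ker\pi\subseteq\ker\varrho$ with $L^1(H_i)$ gives $\ker(\pi|_{L^1(H_i)})\subseteq\ker(\varrho|_{L^1(H_i)})$, so the ISP for $H_i$ and \autoref{prp:norm-reformulations}(1) yield $\Vert\overline{\varrho}(\iota_{H_i}(a))\Vert\leq\Vert\overline{\pi}(\iota_{H_i}(a))\Vert$ for all $a\in C^*_\red(H_i)$. Since every element of $C_c(G)$ has the form $\iota_{H_i}(a)$ for some $i$ and $a\in C_c(H_i)$, the desired inequality holds on the dense subspace $C_c(G)$, hence on $C^*_\red(G)$ by continuity of $\overline{\pi}$ and $\overline{\varrho}$; \autoref{prp:norm-reformulations}(1) then gives the ISP for $G$. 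Part (2) is entirely analogous using \autoref{prp:norm-reformulations}(2): if $\pi\in R_{L^1(G)}^{C^*_\red(G)}$ is faithful on $L^1(G)$, then each $\pi|_{L^1(H_i)}$ is faithful on $L^1(H_i)$, so the IIP for $H_i$ gives $\Vert\overline{\pi}(\iota_{H_i}(a))\Vert=\Vert\iota_{H_i}(a)\Vert$ (as $\iota_{H_i}$ is isometric); passing to $C_c(G)$ and then to $C^*_\red(G)$ shows $\overline{\pi}$ is isometric, i.e. $G$ has the IIP.

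I expect the only step requiring more than routine bookkeeping to be the one technical input in the reduction: that for an open subgroup $H\leq G$ the inclusion $L^1(H)\hookrightarrow L^1(G)$ extends to an \emph{isometric} embedding $C^*_\red(H)\hookrightarrow C^*_\red(G)$ — equivalently, that the reduced norm of an element of $L^1(H)$ is the same whether computed inside $H$ or inside $G$. This is classical and follows from the coset decomposition $\lambda_G|_H\cong\bigoplus_{H\backslash G}\lambda_H$ (one could also invoke the faithful conditional expectation $C^*_\red(G)\to C^*_\red(H)$ restricting $C_c(G)$-functions to $H$). After this, the remaining work is the careful choice of identifications between restricted representations — which representations of $L^1(H_i)$ arise from which representations of $C^*_\red(H_i)$, and how these relate under $\iota_{H_i}$ — together with the density argument above.
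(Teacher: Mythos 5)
Your proposal is correct and follows essentially the same route as the paper: both reduce to the $C^*$-seminorm characterizations of \autoref{prp:norm-reformulations}, use that the support of $f \in C_c(G)$ forces $f$ into some open $H_{i_0}$ with $L^1(H_{i_0}) \subseteq L^1(G)$ and $C^*_\red(H_{i_0}) \hookrightarrow C^*_\red(G)$ isometrically, apply the subgroup's ISP/IIP to the restricted representations, and conclude by density of $C_c(G)$. Your write-up is if anything slightly more explicit than the paper's about why the restrictions $\pi|_{L^1(H_i)}$ lie in $R_{L^1(H_i)}^{C^*_\red(H_i)}$ (namely via $\overline{\pi}\circ\iota_{H_i}$), a point the paper leaves implicit in part (1).
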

\begin{proof}
Our proof is modeled after that of \cite[Theorem~1]{Boi82StarRegSomeSolvGps}.

We first prove (1). 
To verify that $G$ has the ISP, we use \autoref{prp:norm-reformulations} for the inclusion $L^1(G) \subseteq C^*_\red(G)$.
Let $\pi, \varrho \in R_{L^1(G)}^{C^*_r (G)}$ with $\ker(\pi) \subseteq \ker(\varrho)$ in $L^1(G)$.
Let $f \in C_c (G)$, and set $K = \{g \in G \mid f(g) \neq 0\}$. 
Then $K$ is open and relatively compact, so by assumption we find $i_0 \in I$ for which $\overline{K} \subseteq H_{i_0}$. 
Then $H_{i_{0}}$ is an open subgroup of $G$, and thus we may assume that the Haar measure on $H_{i_0}$ is given by the restriction of the Haar measure on $G$.
Then $L^1(H_{i_0})$ embeds isometrically into $L^1(G)$, and we can restrict $\pi$ and $\varrho$ to $*$-representations of $L^1(H_{i_0})$, which then satisfy $\ker(\pi\vert_{L^1(H_{i_0})}) \subseteq \ker(\varrho\vert_{L^1(H_{i_0})})$.
Using that $H_{i_{0}}$ has the ISP, we get {
\begin{align*}
\Vert \rho\vert_{L^1(H_{i_0})} (f \vert_{H_{i_0}}) \Vert 
\leq \Vert \pi \vert_{L^1(H_{i_0})} (f\vert_{H_{i_0}}) \Vert.
\end{align*}
Noting that $\pi\vert_{L^1(H_{i_0})} (f\vert_{H_{i_0}}) = \pi(f)$}, and $\rho\vert_{L^1(H_{i_0})}(f\vert_{H_{i_0}}) = \rho(f)$, we get $\Vert \rho(f) \Vert \leq \Vert \pi(f)\Vert$. 
Since $f$ was arbitrary, this holds for all $f \in C_c (G)$, and by density we deduce it holds for all $f \in C^*_r (G)$. 
By \autoref{prp:norm-reformulations}, we conclude that $L^1(G) \subseteq C^*_r (G)$ has the ISP. 

\medskip
		
The proof of (2) is very similar. 
Let $\pi \in R_{L^1(G)}^{C^*_r (G)}$ be such that $\ker(\pi) = \{0\}$ in $L^1(G)$. 
Let $f \in C_c (G)$ be arbitrary, and set $K = \{g \in G \mid f(g) \neq 0\}$. 
As above, there is $i_0 \in I$ for which $\overline{K} \subseteq H_{i_0}$, and using that $H_{i_{0}}$ has the IIP, we get
\begin{align*}
\Vert \pi\vert_{L^1(H_{i_0})} (f \vert_{H_{i_0}}) \Vert 
= \Vert f \Vert_{C^*_r (H_{i_0})}.
\end{align*}
Noting that $C^*_r (H_{i_0}) \hookrightarrow C^*_r (G)$ isometrically, we deduce that $\Vert \pi(f) \Vert = \Vert f\Vert_{C^*_r (G)}$. 
Since $f$ was arbitrary, this holds for all $f \in C_c (G)$, and by density we deduce it holds for all $f \in C^*_r (G)$. 
By \autoref{prp:norm-reformulations} we conclude that $L^1(G) \subseteq C^*_r (G)$ has the IIP. 
\end{proof}

%==========================================================================================
Using \autoref{prp:ApproxByClosedSubgroups} we can easily establish that both the ISP and the IIP are local properties for groups. 
Analogous results for $*$-regularity and $C^*$-uniqueness were shown in \cite[Corollary~1]{Boi82StarRegSomeSolvGps} and \cite[Proposition~2.1]{LeuNg04PermPropCUniqueGps}, respectively.

%==========================================================================================
\begin{cor}
\label{prp:LocalProps}
Let $G$ be a locally compact group.
The following statements hold:
\begin{enumerate}
\item 
If every open, compactly generated subgroup of $G$ has the ISP, then $G$ has the ISP.
\item 
If every open, compactly generated subgroup of $G$ has the IIP, then $G$ has the IIP.
\end{enumerate}
\end{cor}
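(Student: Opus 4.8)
The plan is to deduce \autoref{prp:LocalProps} directly from \autoref{prp:ApproxByClosedSubgroups} by exhibiting, for a locally compact group $G$, a family of closed subgroups that (a) consists of open, compactly generated subgroups, and (b) has the property that every open, relatively compact subset of $G$ sits inside one of them. Since the hypothesis of \autoref{prp:LocalProps} says that every open, compactly generated subgroup has the ISP (resp.\ the IIP), the conclusion then follows immediately from \autoref{prp:ApproxByClosedSubgroups}(1) (resp.\ (2)) applied to this family.

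The key step is the construction of the family. Given an open, relatively compact subset $U \subseteq G$, its closure $\overline{U}$ is compact, so the subgroup $H_U$ of $G$ \emph{generated} by $\overline{U}$ is a compactly generated subgroup. It is also \emph{open}: indeed, any subgroup of a locally compact group that contains a nonempty open set is open (it is a union of translates of that open set), and $H_U \supseteq U$; in the degenerate case $U = \emptyset$ one can simply take $H_U$ to be any fixed open, compactly generated subgroup, which exists since $G$ is locally compact (take the subgroup generated by a relatively compact open neighborhood of the identity). An open subgroup of a locally compact group is automatically closed. Thus $H_U$ is an open, compactly generated (hence closed) subgroup of $G$, and by construction $U \subseteq \overline{U} \subseteq H_U$. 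Letting $U$ range over all open, relatively compact subsets of $G$, the family $(H_U)_U$ then satisfies the hypothesis of \autoref{prp:ApproxByClosedSubgroups}: every open, relatively compact subset $V$ of $G$ is contained in $H_V$, a member of the family.

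There is no real obstacle here; the only point to be slightly careful about is that \autoref{prp:ApproxByClosedSubgroups} requires the $H_i$ to be closed subgroups and the argument in its proof uses that they are open (so that the restricted Haar measure works and $L^1(H_i) \hookrightarrow L^1(G)$ isometrically, and likewise $C^*_\red(H_i) \hookrightarrow C^*_\red(G)$). Our $H_U$ are open, hence closed, so both conditions are met, and moreover they are compactly generated, so the standing hypothesis of \autoref{prp:LocalProps} applies to each of them. I would write the proof as follows.

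\begin{proof}
We prove~(1); the proof of~(2) is identical, replacing the ISP by the IIP throughout.

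Since $G$ is locally compact, it admits a relatively compact, open neighborhood $V_0$ of the identity, and we let $H_0$ denote the (open, compactly generated, hence closed) subgroup of $G$ generated by $V_0$.
For every nonempty, open, relatively compact subset $U \subseteq G$, let $H_U$ denote the subgroup of $G$ generated by the compact set $\overline{U}$.
Then $H_U$ is compactly generated, and since $H_U$ contains the nonempty open set $U$, it is an open subgroup of $G$, hence closed.
Consider the family of closed subgroups of $G$ consisting of $H_0$ together with all the $H_U$.
Every member of this family is open and compactly generated, and thus has the ISP by hypothesis.
Moreover, every nonempty, open, relatively compact subset $U$ of $G$ satisfies $U \subseteq \overline{U} \subseteq H_U$, and the empty set is contained in $H_0$.
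Hence this family satisfies the hypothesis of \autoref{prp:ApproxByClosedSubgroups}, and we conclude that $G$ has the ISP.
\end{proof}
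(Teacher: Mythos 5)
Your proof is correct and follows exactly the paper's argument: the paper likewise observes that every open, relatively compact subset of $G$ is contained in some open, compactly generated subgroup and then invokes \autoref{prp:ApproxByClosedSubgroups}. Your write-up merely spells out the construction of that subgroup (generated by $\overline{U}$) and handles the empty-set case, which the paper leaves implicit.
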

\begin{proof}
Every open, relatively compact subset of $G$ is contained in some open, compactly generated subgroup of $G$. 
Therefore, the result follows from \autoref{prp:ApproxByClosedSubgroups}.
\end{proof}
	
%==========================================================================================
Next, we consider preservation of the ISP and the IIP when passing to inductive limits.
We restrict to the case of discrete groups, which ensures that inductive limits exist.
(For a discussion of the subtleties in defining even a group topology on the inductive limit of non-discrete, locally compact groups, we refer to \cite{TatShiHir98IndLimTopGps}.)

If $\Gamma_1 \to \Gamma_2 \to \cdots$ is a sequential inductive system of discrete groups with injective connecting maps, then the natural maps $\Gamma_n \to \varinjlim_k \Gamma_k$ are automatically injective, which shows that the conditions on the kernels in the next result are automatically satisfied in this case.

%==========================================================================================
\begin{thm}
\label{prp:DirectLimit}
Let $\Gamma = \varinjlim_\lambda \Gamma_\lambda$ be an inductive limit of discrete groups.
The following statements hold.
\begin{enumerate}
\item 
If each $\Gamma_\lambda$ has the ISP, and if the kernel of each $\Gamma_\lambda \to \Gamma$ is amenable, then~$\Gamma$ has the ISP.
\item 
If each $\Gamma_\lambda$ has the IIP, and if the kernel of each $\Gamma_\lambda \to \Gamma$ is finite, then~$\Gamma$ has the IIP.
\end{enumerate}
\end{thm}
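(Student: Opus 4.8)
The strategy is to reduce the inductive-limit statement to the subgroup-approximation result \autoref{prp:ApproxByClosedSubgroups} via the quotient result \autoref{prp:Quotient}. First I would observe that for a discrete group, every open relatively compact subset is finite, hence contained in a finitely generated subgroup; so by \autoref{prp:ApproxByClosedSubgroups} it suffices to show that every finitely generated subgroup $\Lambda \subseteq \Gamma$ has the ISP (resp.\ IIP). Fix such a $\Lambda$ with generators $g_1, \dots, g_n$. Since $\Gamma = \varinjlim_\lambda \Gamma_\lambda$, there is an index $\lambda$ and elements $\tilde g_1, \dots, \tilde g_n \in \Gamma_\lambda$ whose images under the canonical map $\varphi_\lambda \colon \Gamma_\lambda \to \Gamma$ are $g_1, \dots, g_n$; let $\tilde\Lambda \subseteq \Gamma_\lambda$ be the subgroup they generate, so that $\varphi_\lambda$ restricts to a surjective homomorphism $\tilde\Lambda \to \Lambda$ whose kernel $N$ is contained in $\ker(\varphi_\lambda)$, hence is amenable (resp.\ finite) by hypothesis.

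The next step is to pass the ISP (resp.\ IIP) from $\Gamma_\lambda$ to $\tilde\Lambda$, and then from $\tilde\Lambda$ to the quotient $\tilde\Lambda/N \cong \Lambda$. The first passage is the point where I expect to need a small lemma that is not yet in the excerpt: the ISP and the IIP pass to \emph{open} subgroups of a locally compact group. For a discrete group every subgroup is open, so concretely I need: if $\Gamma_\lambda$ has the ISP (IIP), so does $\tilde\Lambda$. This should follow by the same $C_c$-argument used inside the proof of \autoref{prp:ApproxByClosedSubgroups}: any $f \in C_c(\tilde\Lambda)$ is supported on a relatively compact subset of $\tilde\Lambda$, which is relatively compact in $\Gamma_\lambda$, so one restricts representations of $L^1(\Gamma_\lambda)$ to $L^1(\tilde\Lambda) \hookrightarrow L^1(\Gamma_\lambda)$ and applies the norm reformulation \autoref{prp:norm-reformulations}; alternatively one can phrase it as the special case of \autoref{prp:ApproxByClosedSubgroups} with the single subgroup $\Gamma_\lambda$ and then note that relatively compact subsets of $\tilde\Lambda$ are relatively compact in $\Gamma_\lambda$. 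Granting this, $\tilde\Lambda$ has the ISP (resp.\ IIP), and since $N \trianglelefteq \tilde\Lambda$ is amenable (resp.\ finite, hence compact), \autoref{prp:Quotient}(1) (resp.\ (2)) gives that $\tilde\Lambda/N \cong \Lambda$ has the ISP (resp.\ IIP).

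Finally, assembling: every finitely generated subgroup of $\Gamma$ has the ISP (resp.\ IIP), so by \autoref{prp:LocalProps}(1) (resp.\ (2)) — or directly by \autoref{prp:ApproxByClosedSubgroups}, taking for $(H_i)$ the family of all finitely generated subgroups of $\Gamma$, which covers all finite, hence all open relatively compact, subsets — the group $\Gamma$ has the ISP (resp.\ IIP). The main obstacle is the passage to the open subgroup $\tilde\Lambda \subseteq \Gamma_\lambda$: one must make sure the isometric embedding $L^1(\tilde\Lambda) \hookrightarrow L^1(\Gamma_\lambda)$ (coming from the Haar measure on $\Gamma_\lambda$ restricting to Haar measure on the open subgroup $\tilde\Lambda$) and the corresponding isometric inclusion $C^*_\red(\tilde\Lambda) \hookrightarrow C^*_\red(\Gamma_\lambda)$ interact correctly with the restriction of $*$-representations, exactly as in the proof of \autoref{prp:ApproxByClosedSubgroups}; everything else is a formal combination of results already established.
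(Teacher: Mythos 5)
There is a genuine gap, and it sits exactly where you flag the ``small lemma'': your argument needs the assertion that the ISP (the IIP) passes from a discrete group to its subgroups, in order to get from $\Gamma_\lambda$ down to the lifted subgroup $\tilde\Lambda$. This is not established in the paper, and your proposed justification does not supply it: the $C_c$-argument in \autoref{prp:ApproxByClosedSubgroups} runs in the \emph{opposite} direction. There one starts with representations $\pi,\varrho$ of $L^1(G)$ with $\ker(\pi)\subseteq\ker(\varrho)$, restricts them to $L^1(H_{i_0})$ (where the kernel containment is inherited for free), and uses the ISP of the subgroup to control norms of elements of $C_c(H_{i_0})\subseteq C_c(G)$. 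To prove your lemma you would instead start with representations of $L^1(\tilde\Lambda)$ and have to extend or induce them to $L^1(\Gamma_\lambda)$ while preserving the kernel containment, which is a different and nontrivial matter; likewise, applying \autoref{prp:ApproxByClosedSubgroups} ``with the single subgroup $\Gamma_\lambda$'' is not a legitimate instance of that theorem, since $\Gamma_\lambda$ is not a subgroup of $\tilde\Lambda$ and the theorem only ever deduces the property of the ambient group from that of the members of the family. Whether the ISP or the IIP passes even to open subgroups is not addressed anywhere in the paper (compare the corresponding subtleties for $C^*$-uniqueness in \cite{LeuNg04PermPropCUniqueGps}), so this step cannot be waved through.

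The detour is also unnecessary. Instead of reducing to finitely generated subgroups of $\Gamma$ and lifting generators, take for the family $(H_i)$ in \autoref{prp:ApproxByClosedSubgroups} the images $\Delta_\lambda:=\range(\Gamma_\lambda\to\Gamma)$ themselves. These form a directed family of subgroups whose union is $\Gamma$, so every finite (that is, every open, relatively compact) subset of $\Gamma$ lies in some $\Delta_\lambda$. Each $\Delta_\lambda$ is the quotient of $\Gamma_\lambda$ by the kernel of $\Gamma_\lambda\to\Gamma$, which is amenable (respectively finite, hence compact) by hypothesis, so \autoref{prp:Quotient} gives $\Delta_\lambda$ the ISP (the IIP) directly -- no passage to subgroups of $\Gamma_\lambda$ is ever needed. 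With that modification your outline collapses to the paper's proof; as written, however, it rests on an unproved permanence property.
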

\begin{proof}
For each $\lambda$, let $\Delta_\lambda$ denote the image of the natural map $\Gamma_\lambda \to \Gamma$.
Then $(\Delta_\lambda)_\lambda$ is a directed family of (automatically closed) subgroups of $\Gamma$.
Hence, every open and relatively compact (=finite) subset of $\Gamma$ is contained in $\Delta_\lambda$ for some $\lambda$, which shows that $(\Delta_\lambda)_\lambda$  satisfies the general assumptions of \autoref{prp:ApproxByClosedSubgroups}.

(1)
For each $\lambda$, using that $\Delta_\lambda$ is the quotient of $\Gamma_\lambda$ by an amenable subgroup, and that $\Gamma_\lambda$ has the ISP, it follows from \autoref{prp:Quotient} that $\Delta_\lambda$ has the ISP.
Consequently, $\Gamma$ has the ISP by \autoref{prp:ApproxByClosedSubgroups}.

(2)
Analogously to~(1), we see that each $\Delta_\lambda$ has the IIP by \autoref{prp:Quotient}, and thus $\Gamma$ has the IIP by \autoref{prp:ApproxByClosedSubgroups}.
\end{proof}

%==========================================================================================
We deduce that $*$-regularity passes to inductive limits of discrete groups, which seems to not have been noticed so far.

%==========================================================================================
\begin{cor}
Let $\Gamma = \varinjlim_\lambda \Gamma_\lambda$ be an inductive limit of discrete groups.
The following statements hold:
\begin{enumerate}
\item 
If each $\Gamma_\lambda$ is $*$-regular, then $\Gamma$ is $*$-regular.
\item
If each $\Gamma_\lambda$ is $C^*$-unique and if all connecting maps $\Gamma_\lambda \to \Gamma_\kappa$ are injective, then $\Gamma$ is $C^*$-unique.
\end{enumerate}
\end{cor}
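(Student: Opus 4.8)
The plan is to combine the characterization of $*$-regularity and $C^*$-uniqueness from \autoref{prp:CharStarReg} with the permanence results for the ISP and the IIP established in \autoref{prp:DirectLimit}, together with standard stability properties of amenability. In both cases the argument is a formal deduction, so I will simply check that all hypotheses are met.

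For~(1), assume each $\Gamma_\lambda$ is $*$-regular. By \autoref{prp:CharStarReg}, each $\Gamma_\lambda$ is then amenable and has the ISP. First I would observe that $\Gamma$ itself is amenable: writing $\Delta_\lambda$ for the image of the natural map $\Gamma_\lambda \to \Gamma$, each $\Delta_\lambda$ is a quotient of the amenable group $\Gamma_\lambda$, hence amenable, and $\Gamma$ is the directed union of the subgroups $\Delta_\lambda$, so $\Gamma$ is amenable. Next, the kernel of each map $\Gamma_\lambda \to \Gamma$ is a subgroup of the amenable group $\Gamma_\lambda$, hence amenable. Thus the hypotheses of \autoref{prp:DirectLimit}(1) are satisfied, and $\Gamma$ has the ISP. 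Invoking \autoref{prp:CharStarReg} once more, $\Gamma$ is $*$-regular.

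For~(2), assume each $\Gamma_\lambda$ is $C^*$-unique and that all connecting maps are injective. By \autoref{prp:CharStarReg}, each $\Gamma_\lambda$ is amenable and has the IIP, and $\Gamma$ is amenable exactly as in the proof of~(1). Since the connecting maps are injective, each natural map $\Gamma_\lambda \to \Gamma$ is injective, so its kernel is trivial, in particular finite. Hence \autoref{prp:DirectLimit}(2) applies and $\Gamma$ has the IIP, so by \autoref{prp:CharStarReg} again $\Gamma$ is $C^*$-unique.

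No serious obstacle is expected; the argument is essentially a bookkeeping exercise. The only points requiring a moment's care are verifying that amenability is preserved when passing to quotients and to directed unions, and checking that the kernel hypotheses in \autoref{prp:DirectLimit} hold automatically here—for the ISP because subgroups of amenable groups are amenable, and for the IIP because injective connecting maps force the maps into the limit to be injective.
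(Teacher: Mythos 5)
Your proof is correct and follows essentially the same route as the paper: reduce via \autoref{prp:CharStarReg} to the ISP/IIP plus amenability, note that the kernel hypotheses of \autoref{prp:DirectLimit} hold automatically (subgroups of amenable groups are amenable, and injective connecting maps force the maps into the limit to be injective), and conclude. You are in fact slightly more careful than the paper in explicitly verifying that $\Gamma$ itself is amenable as a directed union of quotients of amenable groups, a point the paper leaves implicit.
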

\begin{proof}
(1)
Since $*$-regular groups are amenable, and since amenability passes to (closed) subgroups, we see that every subgroup of a discrete, $*$-regular group is amenable.
Therefore, the kernel of the map $\Gamma_\lambda \to \Gamma$ is automatically amenable for every $\lambda$, and the result follows from \autoref{prp:DirectLimit}.

(2)
The assumption implies that $\Gamma_\lambda \to \Gamma$ is injective for every $\lambda$, and the result follows from \autoref{prp:DirectLimit}.
\end{proof}

%==========================================================================================
\begin{rmk}
In light of $*$-regularity passing to inductive limits of discrete groups, it would be of great interest to determine if $*$-regularity passes to extensions of discrete groups.
A positive answer (even just for extensions by finite groups and~$\ZZ$) would immediately provide a good partial answer to \autoref{qst:discrete-amenable-is-*-regular} by showing that elementary amenable groups are $*$-regular.
Indeed, a result of Osin \cite[Corollary~2.1]{Osi02ElemClassesGps} shows that the class of elementary amenable groups is the smallest class~$\mathcal{C}$ of groups which contains the trivial group and is closed under inductive limits, and such that $\Gamma \in \mathcal{C}$ for every extension
\[
1 \to N \to \Gamma \to \Gamma/N \to 1
\]
with $N \in \mathcal{C}$ and $\Gamma/N$ a finite group or $\Gamma/N = \ZZ$.

Since in the result of Osin it suffices to assume that the class is closed under inductive limits with injective connecting maps, one could similarly deduce that elementary amenable groups are $C^*$-unique if one can show that $C^*$-uniqueness passes to extensions by finite groups and $\ZZ$.
\end{rmk}

%==========================================================================================
We deduce from \autoref{prp:ApproxByClosedSubgroups} the existence of certain maximal subgroups that have the ISP or the IIP.
An analogous result for maximal $*$-regular subgroups of discrete groups was shown in \cite[Proposition~3.2]{LeuNg04PermPropCUniqueGps}.

%==========================================================================================
\begin{thm}
Let $G$ be a locally compact group.
The following statements hold:
\begin{enumerate}
\item 
Every (normal) open subgroup $H \leq G$ having the ISP is contained in a maximal (normal) open subgroup of $G$ having the ISP.
\item 
Every (normal) open subgroup $H \leq G$ having the IIP is contained in a maximal (normal) open subgroup of $G$ having the IIP.
\end{enumerate}
\end{thm}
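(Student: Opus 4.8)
The plan is to apply Zorn's lemma to the poset of (normal) open subgroups of $G$ having the ISP (respectively the IIP), ordered by inclusion, with the family from the hypothesis as the starting point. The only thing that needs checking is that this poset is closed under unions of chains, after which the existence of a maximal element above any given such $H$ is immediate. So fix a chain $(H_i)_{i \in I}$ of (normal) open subgroups with the ISP (the IIP), and set $H := \bigcup_{i} H_i$. Since each $H_i$ is open and the $H_i$ form a chain, $H$ is an open subgroup of $G$, and it is normal whenever each $H_i$ is (normality is preserved under directed unions). It is an upper bound for the chain, so it remains to show $H$ itself has the ISP (the IIP).

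For this I would invoke \autoref{prp:ApproxByClosedSubgroups} applied to the group $H$ with the family $(H_i)_{i \in I}$ of its closed subgroups: I need that every open, relatively compact subset of $H$ is contained in some $H_i$. Given such a subset $U \subseteq H$, its closure $\overline{U}$ (taken in $H$, equivalently in $G$ since $H$ is open) is compact, hence covered by finitely many of the open sets $H_i$; as the $H_i$ form a chain, one of them, say $H_{i_0}$, already contains $\overline{U}$, and in particular $U \subseteq H_{i_0}$. Thus the hypothesis of \autoref{prp:ApproxByClosedSubgroups} is met, and since each $H_i$ has the ISP (the IIP), we conclude that $H$ has the ISP (the IIP). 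This establishes the chain condition, and Zorn's lemma finishes both statements simultaneously.

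I do not expect any genuine obstacle here; the content of the argument is entirely carried by \autoref{prp:ApproxByClosedSubgroups}, and the rest is the routine verification that open subgroups, normality, and the relevant covering property behave well under directed unions. The one mild subtlety worth a sentence is that relative compactness of $U$ in $H$ is the same as in $G$ (again because $H$ is open, hence closed, in $G$), so that the compact set $\overline{U}$ really is covered by finitely many $H_i$; beyond that the proof is a standard Zorn's lemma application.
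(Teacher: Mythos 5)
Your proposal is correct and follows essentially the same route as the paper: Zorn's lemma applied to the poset of open (normal) subgroups containing $H$ with the ISP (the IIP), with the chain condition verified by applying \autoref{prp:ApproxByClosedSubgroups} to the union of a chain, using that a relatively compact subset of the union is contained in a single member of the chain. Your extra sentence justifying the covering step via compactness and a finite subcover is a detail the paper leaves implicit, but the argument is the same.
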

\begin{proof}
To verify (1), let $H \leq G$ be an open (normal) subgroup that has the ISP. 
Let $\mathcal{L}$ denote the set of open (normal) subgroups of $G$ that contain~$H$ and that have the ISP.
We order $\mathcal{L}$ by inclusion.
The existence of maximal elements in $\mathcal{L}$ follows once we have verified the assumptions of Zorn's lemma for $\mathcal{L}$.

Given a chain $\mathcal{C}$ in $\mathcal{L}$, we need to find an upper bound of $\mathcal{C}$ in $\mathcal{L}$.
Set $H' := \bigcup_{C \in \mathcal{C}} C$, which is an open (normal) subgroup of $G$ containing $H$.
Since $\mathcal{C}$ is a chain, every relatively compact subset of $H'$ is contained in some $C \in \mathcal{C}$.
This shows that the family $\mathcal{C}$ of open (hence also closed) subgroups of $H'$ satisfies the assumptions of \autoref{prp:ApproxByClosedSubgroups}, and since each group in $\mathcal{C}$ has the ISP, we deduce that $H'$ has the ISP.
Hence $H'$ belongs to $\mathcal{L}$, and it is clearly an upper bound for $\mathcal{C}$.

An analogous argument proves~(2).
\end{proof}

%==========================================================================================
We now turn to inverse limits.
As for inductive limits, inverse limits of locally compact groups only exist under additional assumptions.
We restrict our attention to the important case of an inverse limit $G = \varprojlim_\lambda G_\lambda$ where each map $G \to G_\lambda$ is surjective and has compact kernel $K_\lambda$.
Then $(K_\lambda)_\lambda$ is a downward directed family of compact, normal subgroups with $\bigcap_\lambda K_\lambda = \{1\}$, and given such a family of compact, normal subgroups we can realize $G$ as an inverse limit of the quotients $G/K_\lambda$.
In this setting, we show that $G$ has the ISP (the IIP) if and only if each~$G_\lambda$ has the ISP (the IIP).
We discuss in \autoref{rmk:InvLim} how our result generalizes similar statements about $*$-regularity and $C^*$-uniqueness from \cite[Lemma~9]{Boi82CtdGpsPolyDual} and \cite[Corollary~2.3]{LeuNg04PermPropCUniqueGps}.

%==========================================================================================
\begin{thm}
\label{prp:InvLim}
Let $G$ be a locally compact group which can be realized as an inverse limit $G = \varprojlim_\lambda G_\lambda$ such that each canonical map $G \to G_\lambda$ is surjective and has compact kernel. 
The following statements hold:
\begin{enumerate}
\item 
The group $G$ has the ISP if and only if every $G_\lambda$ has the ISP.
\item 
The group $G$ has the IIP if and only if every $G_\lambda$ has the IIP.
\end{enumerate}
\end{thm}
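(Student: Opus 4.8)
The plan is to deduce the two forward implications directly from \autoref{prp:Quotient}, and to establish the converse implications using the $C^*$-seminorm characterizations of the ISP and the IIP in \autoref{prp:norm-reformulations}. As noted in the discussion preceding the statement, each canonical map $G \to G_\lambda$ is the quotient by a compact, normal subgroup $K_\lambda$, so we may identify $G_\lambda$ with $G/K_\lambda$. Since compact groups are amenable, \autoref{prp:Quotient}(1) shows that $G_\lambda$ has the ISP whenever $G$ does, and \autoref{prp:Quotient}(2) shows that $G_\lambda$ has the IIP whenever $G$ does.

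For the converse, I would first set up the following approximation picture, borrowing from the proof of \autoref{prp:Quotient}(2). Let $z_\lambda \in M(L^1(G))$ be the central projection given by the normalized Haar measure on $K_\lambda$, and let $p_\lambda \in M(C^*_\red(G))$ be its image under the canonical contractive $*$-homomorphism $M(L^1(G)) \to M(C^*_\red(G))$. Then $p_\lambda C^*_\red(G)$ is a closed ideal of $C^*_\red(G)$, and $z_\lambda$ and $p_\lambda$ induce identifications $z_\lambda L^1(G) \cong L^1(G_\lambda)$ and $p_\lambda C^*_\red(G) \cong C^*_\red(G_\lambda)$ that intertwine the standard inclusions; in particular $z_\lambda L^1(G) = L^1(G) \cap p_\lambda C^*_\red(G)$, and the norm that $p_\lambda C^*_\red(G)$ inherits from $C^*_\red(G)$ agrees, under the identification, with the norm of $C^*_\red(G_\lambda)$. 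Since the $K_\lambda$ are downward directed with $\bigcap_\lambda K_\lambda = \{1\}$, for every $f \in C_c(G)$ we have $z_\lambda * f \in z_\lambda L^1(G) \subseteq p_\lambda C^*_\red(G)$ with $z_\lambda * f \to f$ in $L^1$-norm, hence in $C^*_\red(G)$-norm; as $C_c(G)$ is dense in $C^*_\red(G)$, this shows that $\bigcup_\lambda p_\lambda C^*_\red(G)$ is dense in $C^*_\red(G)$.

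Now assume every $G_\lambda$ has the ISP. Given $\pi, \varrho \in R_{L^1(G)}^{C^*_\red(G)}$ with $\ker(\pi) \subseteq \ker(\varrho)$ in $L^1(G)$, and writing $\overline{\pi}, \overline{\varrho}$ for their extensions to $C^*_\red(G)$, the restrictions $\pi\vert_{z_\lambda L^1(G)}$ and $\varrho\vert_{z_\lambda L^1(G)}$, viewed via the identification $z_\lambda L^1(G) \cong L^1(G_\lambda)$, are $*$-representations of $L^1(G_\lambda)$ whose unique extensions to $C^*_\red(G_\lambda)$ are $\overline{\pi}\vert_{p_\lambda C^*_\red(G)}$ and $\overline{\varrho}\vert_{p_\lambda C^*_\red(G)}$; thus they lie in $R_{L^1(G_\lambda)}^{C^*_\red(G_\lambda)}$, and $\ker(\pi\vert_{z_\lambda L^1(G)}) = \ker(\pi) \cap z_\lambda L^1(G) \subseteq \ker(\varrho) \cap z_\lambda L^1(G) = \ker(\varrho\vert_{z_\lambda L^1(G)})$. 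Applying the ISP of $G_\lambda$ through \autoref{prp:norm-reformulations}(1) gives $\Vert \overline{\varrho}(b) \Vert \leq \Vert \overline{\pi}(b) \Vert$ for all $b \in p_\lambda C^*_\red(G)$; since $\bigcup_\lambda p_\lambda C^*_\red(G)$ is dense and the $C^*$-seminorms $b \mapsto \Vert \overline{\pi}(b) \Vert$ and $b \mapsto \Vert \overline{\varrho}(b) \Vert$ are continuous, this inequality extends to all of $C^*_\red(G)$, and \autoref{prp:norm-reformulations}(1) shows that $G$ has the ISP. The IIP is handled identically: if $\pi \in R_{L^1(G)}^{C^*_\red(G)}$ has $\ker(\pi) = \{0\}$ in $L^1(G)$, then each restriction $\pi\vert_{z_\lambda L^1(G)}$ has trivial kernel in $L^1(G_\lambda)$, so the IIP of $G_\lambda$ and \autoref{prp:norm-reformulations}(2) give $\Vert \overline{\pi}(b) \Vert = \Vert b \Vert$ for $b \in p_\lambda C^*_\red(G)$, hence for all $b \in C^*_\red(G)$ by density, whence $G$ has the IIP.

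The substantive point — and the step I expect to require the most care — is the structural input that $C^*_\red(G)$ is the closure of the union of the corners $p_\lambda C^*_\red(G) \cong C^*_\red(G_\lambda)$, together with the compatibility of all the identifications: that restricting a representation to a corner and then extending gives the same operator as extending and then restricting, and that the corner norms match the ambient norm. Once these identifications are carefully recorded, the remainder is a routine density-and-continuity argument. Should the seminorm bookkeeping prove cumbersome, a viable alternative is to realize $\Pi_{C^*_\red(G)}$ as the increasing union of the open subsets $\Pi_{C^*_\red(G_\lambda)}$ and to transport the topological characterizations in \autoref{prp:CharISPDense}(3) and \autoref{prp:CharIIP}(2) from each piece to the whole.
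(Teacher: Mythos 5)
Your proposal is correct, and the forward implications are handled exactly as in the paper (via \autoref{prp:Quotient}). For the converse implications, however, you take a genuinely different route. The paper argues directly at the level of ideals: it proves that the kernels $I_\lambda = \ker(\pi_\lambda) = (1-\varphi(z_\lambda))C^*_\red(G)$ satisfy $\bigcap_\lambda I_\lambda = \{0\}$ (which is the same structural input as your density claim for $\bigcup_\lambda \varphi(z_\lambda)C^*_\red(G)$, proved by the same approximation $\varphi(z_\lambda)a \to a$), and additionally establishes the intertwining identity $\pi_\lambda(I) \cap L^1(G_\lambda) = \pi_\lambda(I \cap L^1(G))$ via a factorization trick $b = b_1 b_2$; it then concludes with the lattice computation $I \subseteq \bigcap_\lambda (J + I_\lambda) = J + \bigcap_\lambda I_\lambda = J$ for the ISP, and a direct argument for the IIP. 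You instead run the argument through the $C^*$-seminorm characterization of \autoref{prp:norm-reformulations}, restricting representations to the corners $\varphi(z_\lambda)C^*_\red(G) \cong C^*_\red(G_\lambda)$ and extending the resulting norm inequalities by density --- essentially transplanting the technique the paper uses for \autoref{prp:ApproxByClosedSubgroups} from open subgroups to complemented quotients. Your route avoids both the factorization argument behind the paper's Claim~2 and the identity $\bigcap_\lambda(J+I_\lambda) = J + \bigcap_\lambda I_\lambda$, at the cost of the bookkeeping you flag: that $z_\lambda L^1(G)$ is dense in $\varphi(z_\lambda)C^*_\red(G)$, that the corner norms agree with the ambient norm (automatic, as the identifications are injective $*$-homomorphisms of $C^*$-algebras), and that restriction and extension of representations commute (which follows from that density). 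All of these check out, so the argument is sound.
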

\begin{proof}
For each $\lambda$, let $K_\lambda$ denote the kernel of the natural map $G \to G_\lambda$, so that $G_\lambda \cong G/K_\lambda$. 
By \autoref{prp:Quotient}, the ISP and the IIP pass to quotients by compact, normal subgroups, which shows the forward implications in~(1) and~(2).

Given $\lambda$, using that $K_\lambda$ is a compact, normal subgroup of $G$, we have seen in the proof of \autoref{prp:Quotient} that there exist natural surjective $*$-homomorphisms $\pi_\lambda \colon C^*_\red(G) \to C^*_\red(G_\lambda)$ and $\varrho_\lambda \colon L^1(G) \to L^1(G_\lambda)$ such that the following diagram commutes:
\[
\xymatrix{
L^1( G ) \ar@{^{(}->}[r] \ar@{->>}[d]_{\varrho_\lambda}
& C^*_\red(G) \ar@{->>}[d]^{\pi_\lambda} \\
L^1(G_\lambda) \ar@{^{(}->}[r]
& C^*_\red(G_\lambda).
}
\]

Set $I_\lambda := \ker(\pi_\lambda)$, which is a closed ideal in $C^*_\red(G)$.

Claim~1: \emph{We have $\bigcap_\lambda I_\lambda = \{0\}$.}
We have seen in the proof of \autoref{prp:Quotient} that for each $\lambda$ there exists a central, contractive idempotent $z_\lambda \in M(G)$ such that {
\begin{equation}\label{eq:kernel-identifications}
	\ker(\varrho_\lambda) = (1-z_\lambda)L^1(G), \andSep
	I_\lambda = \ker(\pi_\lambda) = (1-\varphi(z_\lambda))C^*_\red(G),
\end{equation}}
%\[
%\ker(\varrho_\lambda) = (1-z_\lambda)L^1(G), \andSep
%I_\lambda = \ker(\pi_\lambda) = (1-\varphi(z_\lambda))C^*_\red(G),
%\]
where $\varphi \colon M(G) \to M(C^*_\red(G))$ denotes the natural, contractive $*$-homomorphism.
Using that $\bigcap_\lambda K_\lambda = \{1\}$ {and \cite[Proposition 3.5.6]{ReiSte00HarmAna}}, it follows that $\lim_\lambda \|z_\lambda f - f\|_1 = 0$ for every $f \in L^1(G)$. 
Given $a \in C^*_\red(G)$, let us verify that $\lim_\lambda \|\varphi(z_\lambda) a - a\| = 0$.
Given $\varepsilon>0$, choose $f \in L^1(G)$ with $\| a - f \| < \varepsilon$.
Then
\begin{align*}
\| \varphi(z_\lambda)a - a \|
&\leq \| \varphi(z_\lambda)a - \varphi(z_\lambda)f \| + \| \varphi(z_\lambda)f - f \| + \| f -a \| \\
&\leq \| \varphi(z_\lambda) \| \|a - f \| + \| z_\lambda f - f \|_1 + \| f -a \|
< 3 \varepsilon.
\end{align*}

Now, given $a \in \bigcap_\lambda I_\lambda \subseteq C^*_\red(G)$, we get {$(1-z_\lambda)a=0$} %$(1-z_\lambda)a=a$ 
and thus $z_\lambda a = a$ for each $\lambda$, which then gives $a=0$.
Thus, $\bigcap_\lambda I_\lambda = \{0\}$.
This proves the claim.

\medskip

Claim~2: \emph{Given a closed ideal $I \subseteq C^*_\red(G)$, we have
\[
\pi_\lambda(I) \cap L^1(G_\lambda) = \pi_\lambda(I \cap L^1(G))
\]
for every $\lambda$.}
Indeed, the inclusion `$\supseteq$' is clear.
To show the converse inclusion, let $a \in \pi_\lambda(I) \cap L^1(G_\lambda)$, and choose $b \in I$ and $f \in L^1(G)$ such that
\[
\pi_\lambda(b) = a, \andSep
\varrho_\lambda(f) = a.
\]
Then $z_\lambda f \in L^1(G)$.
Since $I$ is a \ca{}, every element factors and we can choose $b_1,b_2 \in I$ such that $b=b_1b_2$.
Then
\[
\varphi(z_\lambda)b
= \varphi(z_\lambda)b_1b_2
\in C^*_\red(G)I
\subseteq I.
\]
Note that
{ using \autoref{eq:kernel-identifications} we obtain
\[
\pi_\lambda(z_\lambda f)
=\pi_\lambda(f)
=a
=\pi_\lambda(b)
=\pi_\lambda(\varphi(z_\lambda)b)
\]} 
%\[
%\pi_\lambda(zf)
%=\pi_\lambda(f)
%=a
%=\pi_\lambda(b)
%=\pi_\lambda(zb)
%\]
and since $\pi_\lambda$ is injective on {$\varphi(z_\lambda)C^*_\red(G)$ by \autoref{eq:kernel-identifications},}  %$z_\lambda C^*_ \red(G)$, 
we get $z_\lambda f= \varphi(z_\lambda) b \in I \cap L^1(G)$.
This proves the claim.

\medskip

(1)
Assuming that each $G_\lambda$ has the ISP, let us verify that $G$ has the ISP.
Let $I,J \subseteq C^*_\red(G)$ be closed ideals with $I \cap L^1(G) \subseteq J \cap L^1(G)$.
For each $\lambda$, using Claim~2, we deduce that
\[
\pi_\lambda(I) \cap L^1(G_\lambda)
= \pi_\lambda(I \cap L^1(G))
\subseteq \pi_\lambda(J \cap L^1(G))
= \pi_\lambda(J) \cap L^1(G_\lambda).
\]
Using that $G_\lambda$ has the ISP, we get $\pi_\lambda(I) \subseteq \pi_\lambda(J)$, and thus $I \subseteq J + I_\lambda$.
Applying Claim~1 at the last step, we obtain
\[
I 
\subseteq \bigcap_\lambda ( J + I_\lambda ) 
= J + \bigcap_\lambda I_\lambda
= J,
\]
as desired. 

\medskip

(2)
Assuming that each $G_\lambda$ has the IIP, let us verify that $G$ has the IIP.
Let $I \subseteq C^*_\red(G)$ be a non-zero, closed ideal.
By Claim~1, we have $\bigcap_\lambda I_\lambda = \{0\}$, and we find $\lambda$ such that $I \nsubseteq I_\lambda$.
Then $\pi_\lambda(I) \subseteq C^*_\red(G_\lambda)$ is a non-zero, closed ideal.
Using that $G_\lambda$ has the IIP at the first step, and using Claim~2 at the second step, we get
\[
\{0\} 
\neq \pi_\lambda(I) \cap L^1(G_\lambda)
= \pi_\lambda(I \cap L^1(G) ),
\]
which shows that $I \cap L^1(G)$ is non-zero, as desired.
\end{proof}

%==========================================================================================
\begin{rmk}
\label{rmk:InvLim}
Let $G$ be a locally compact group, and let $(K_\lambda)_\lambda$ be a downward directed family of compact, normal subgroups with $\bigcap_\lambda K_\lambda = \{1\}$.
By \cite[Lemma~9]{Boi82CtdGpsPolyDual}, the group $G$ is $*$-regular if and only if all $G/K_\lambda$ are $*$-regular.
Similarly, by \cite[Corollary~2.3]{LeuNg04PermPropCUniqueGps}, the group $G$ is $C^*$-unique if and only if $G/K_\lambda$ is $C^*$-unique for large enough $\lambda$.

\autoref{prp:InvLim} recovers and generalizes these results (and even strengthens the result about $C^*$-uniqueness).
Indeed, by \cite[Proposition~1.13]{Pat88Amen}, if $N$ is a closed, normal subgroup in $G$, then $G$ is amenable if and only if $N$ and $G/N$ are amenable.
Since compact groups are amenable, it follows that $G$ is amenable if and only if $G/K_\lambda$ is amenable for some $\lambda$ (equivalently, for every $\lambda$).
Therefore, \cite[Lemma~9]{Boi82CtdGpsPolyDual} follows from \autoref{prp:InvLim} using that by \autoref{prp:CharStarReg} a locally compact group is $*$-regular if and only if it is amenable and satisfies the ISP.
Similarly, we recover \cite[Corollary~2.3]{LeuNg04PermPropCUniqueGps}, and in fact we obtain that $G$ is $C^*$-unique if and only if all $G/K_\lambda$ are $C^*$-unique.
\end{rmk}

%==========================================================================================
Given a locally compact group $G$, let $G_0$ denote the connected component of the identity, which is always a closed, normal subgroup.
One says that $G$ is \emph{connected} if $G=G_0$, that $G$ is \emph{almost connected} if $G/G_0$ is compact, and that $G$ is \emph{totally disconnected} if $G_0=\{1\}$.
The quotient group $G/G_0$ is totally disconnected, and it follows from van Dantzig's theorem (see, for example \cite[Theorem~1.6.7]{Tao14Hilb5thPbm}) that there exists an open, compact subgroup $U \subseteq G/G_0$.
Then the pre-image of $U$ in $G$ is an open subgroup $V \subseteq G$ such that $V/V_0$ ($=V/G_0$) is compact.
This shows that every locally compact group contains an almost connected, open subgroup.

By the Gleason-Yamabe theorem (\cite[Theorem~4.6]{MonZip55TopTransfGps}), every almost connected, locally compact group is an inverse limit of (finite-dimensional, real) Lie groups.
(See also \cite[Theorem~1.6.1]{Tao14Hilb5thPbm}.)
As a consequence of \autoref{prp:InvLim}, we see that the ISP and the IIP for an almost connected, locally compact group is determined by its Lie group quotients.
This generalizes a similar statement for $C^*$-uniqueness from \cite[Proposition~2.2(b)]{LeuNg04PermPropCUniqueGps}.
One also obtains that an almost connected, locally compact group is $*$-regular if and only if all of its Lie group quotients are $*$-regular.

%==========================================================================================
\begin{cor}
\label{prp:LieQuotients}
Let $G$ be an almost connected, locally compact group, and set
\[
\mathcal{L} := \big\{ K \subseteq G : K \text{ is a compact, normal subgroup such that $G/K$ is a Lie group} \big\}.
\]
Then $G$ has the ISP (the IIP) if and only if $G/K$ has the ISP (the IIP) for every $K \in \mathcal{L}$.
\end{cor}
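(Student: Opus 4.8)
The plan is to deduce the corollary directly from \autoref{prp:InvLim} and \autoref{prp:Quotient}, together with the Gleason--Yamabe structure theorem recalled above. Concretely, by \cite[Theorem~4.6]{MonZip55TopTransfGps} the almost connected group $G$ is realized as an inverse limit $G = \varprojlim_\lambda G/K_\lambda$ over a downward directed family $(K_\lambda)_\lambda$ of compact, normal subgroups with $\bigcap_\lambda K_\lambda = \{1\}$ and each $G/K_\lambda$ a Lie group, where each canonical map $G \to G/K_\lambda$ is surjective with compact kernel $K_\lambda$. In particular, each $K_\lambda$ belongs to $\mathcal{L}$.

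For the forward implication, suppose $G$ has the ISP (the IIP) and let $K \in \mathcal{L}$. Since $K$ is a compact, hence amenable, normal subgroup of $G$, \autoref{prp:Quotient}(1) (resp.\ \autoref{prp:Quotient}(2)) shows that $G/K$ has the ISP (resp.\ the IIP); note that this direction does not use that $G/K$ is a Lie group. For the converse, suppose that $G/K$ has the ISP (the IIP) for every $K \in \mathcal{L}$. Then in particular each $G/K_\lambda$ has the ISP (the IIP), and since $G = \varprojlim_\lambda G/K_\lambda$ with surjective maps having compact kernels, \autoref{prp:InvLim} gives that $G$ has the ISP (the IIP). This proves the corollary.

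I expect no substantial obstacle. The only point requiring a little care is that the family of compact normal subgroups with Lie-group quotient supplied by Gleason--Yamabe genuinely realizes $G$ as an inverse limit in the sense demanded by \autoref{prp:InvLim} (downward directed, trivial intersection, surjective quotient maps with compact kernels); this is part of the standard formulation of that theorem. If one prefers to avoid passing to a subfamily, one can observe that $\mathcal{L}$ is itself downward directed: given $K, K' \in \mathcal{L}$, the group $G/(K \cap K')$ has no small subgroups, because a sufficiently small subgroup maps trivially into each of the no-small-subgroups Lie groups $G/K$ and $G/K'$ and hence lies in the kernel of $G/(K \cap K') \to G/K \times G/K'$, which is trivial; by Gleason--Yamabe $G/(K \cap K')$ is then a Lie group, so $K \cap K' \in \mathcal{L}$. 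Together with $\bigcap_{K \in \mathcal{L}} K = \{1\}$, which follows from $\bigcap_\lambda K_\lambda = \{1\}$, one may then take $\mathcal{L}$ itself as the index set for the inverse limit and apply \autoref{prp:InvLim} in one stroke.
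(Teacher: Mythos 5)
Your proposal is correct and follows essentially the same route as the paper, which simply invokes \autoref{prp:InvLim} after noting that $\mathcal{L}$ is downward directed with trivial intersection. Your additional details---using \autoref{prp:Quotient} to cover arbitrary $K \in \mathcal{L}$ in the forward direction, and the no-small-subgroups argument showing $K \cap K' \in \mathcal{L}$---are sound and merely flesh out what the paper leaves implicit.
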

\begin{proof}
This follows from \autoref{prp:InvLim} using that the family $\mathcal{L}$ is downward directed (possibly with a minimal element $\{1\}$ if $G$ is already a Lie group).
\end{proof}

%==========================================================================================
%==========================================================================================
\section{Ideal separation in tensor products of \texorpdfstring{$C^*$}{C*}-algebras}
\label{sec:TensProd}

%==========================================================================================
Given \ca{s} $A$ and $B$, we observe that the inclusion of the algebraic tensor product $A \odot B$ in the minimal tensor product $A \tensMin B$ always has the ideal intersection property (IIP);
see \autoref{prp:AlgTP}.
For the ideal separation property (ISP) this is no longer true:
The inclusion $A \odot B \subseteq A \tensMin B$ has the ISP if and only if the pair of \ca{s} $A$ and~$B$ has Tomiyama's property~(F);
see \autoref{prp:CharPropF}.
Analogous results holds for the inclusion of the projective tensor product $A \tensProj B$ in $A \tensMin B$.

Using this, we show that for two locally compact groups $G$ and $H$, the Cartesian product $G \times H$ has the IIP if and only if $G$ and $H$ have the IIP.
Further, $G \times H$ has the ISP if and only if $G$ and $H$ have the ISP and the pair of reduced group \ca{s} $C^*_\red(G_1)$ and $C^*_\red(G_2)$ has Tomiyama's property~(F);
see \autoref{prp:ProductGp}.
Since Tomiyama's property~(F) is automatic whenever one of the \ca{s} is exact, we deduce that for two locally compact groups $G$ and $H$, at least one of which is exact, $G \times H$ has the ISP if and only if $G$ and $H$ do;
see \autoref{prp:ISP-PrductExactGp}.

\medskip

%==========================================================================================
We begin by recalling some basic facts about algebraic, injective, projective, minimal and maximal tensor products of \ca{s}.
Let $A$ and $B$ be two \ca{s}.
We use $A \odot B$ to denote the algebraic tensor product as $\CC$-vector spaces, which is a complex $\ast$-algebra with involution induced by $(a \otimes b)^* := a^* \otimes b^*$.
		
Viewing $A$ and~$B$ as Banach spaces, we let $\|\cdot\|_\varepsilon$ and $\|\cdot\|_\pi$ denote the injective and projective norms on $A \odot B$, with the respective completions denoted by $A \tensInj B$ and $A \tensProj B$;
see \cite[Sections~2 and~3]{Rya02IntroTensProdBSp} for details.
We further let $\|\cdot\|_{\mathrm{min}}$ and $\|\cdot\|_{\mathrm{max}}$ denote the minimal and maximal $C^*$-norms, with completions denoted by $A \tensMax B$ and $A \tensMin B$;
see \cite[Section~II.9]{Bla06OpAlgs} for details.
		
While $A \otimes B$ and $A \tensMax B$ are \ca{s}, $A \tensProj B$ is `only' a Banach $\ast$-algebra.
Even worse, $A \tensInj B$ is in general not even a Banach algebra, since the multiplication on $A \odot B$ may not extend to this completion;
see \cite[Corollary~4]{Ble88GeomTensProdCAlg}.
We have
\[
\|\cdot\|_\varepsilon
\leq \|\cdot\|_{\mathrm{min}}
\leq \|\cdot\|_{\mathrm{max}}
\leq \|\cdot\|_\pi,
\]
and in general all norms can be different.
We obtain contractive maps with dense ranges as shown in the following commutative diagram:
\[
\xymatrix{
A \tensMax B \ar@{->>}[r]
& A \tensMin B \ar[d] \\
A \tensProj B \ar[u] \ar@{^{(}->}[r]
& A \tensInj B.
}
\]
The map from the maximal to the minimal tensor product is a quotient map, and it is an isomorphism whenever $A$ or $B$ are nuclear (and also sometimes when neither~$A$ nor~$B$ are nuclear \cite{Pis20NonNuclWEP-LLP}).
Haagerup showed in \cite[Proposition~2.2]{Haa85GrothendieckIneq} that the canonical map $A \tensProj B \to A \tensInj B$ is injective.
It follows that the map $A \tensProj B \to A \otimes B$ is injective as well {since the first map in a composition of maps making up an injection must be injective}.
We will therefore usually identify $A \odot B$ and $A \tensProj B$ with subalgebras of $A \otimes B$:
\[
A \odot B 
\subseteq A \tensProj B 
\subseteq A \tensMin B.
\]
		
Given $\ast$-homomorphisms $\pi_1 \colon A_1 \to A$ and $\pi_2 \colon B_1 \to B$, the naturally induced $\ast$-homomorphism $A_1 \odot B_1 \to A \odot B$ extends to the respective completions, when both tensor products are equipped with the minimal (maximal, projective) norm.
If $\pi_1$ and $\pi_2$ are injective, then so are the maps $A_1 \odot B_1 \to A \odot B$ and $A_1 \tensMin B_1 \to A \tensMin B$, but $A_1 \tensMax B_1 \to A \tensMax B$ may not be injective.
If $\pi_1$ and $\pi_2$ are surjective, then so are  $A_1 \odot B_1 \to A \odot B$, as well as $A_1 \tensMin B_1 \to A \tensMin B$ and $A_1 \tensMax B_1 \to A \tensMax B$.
		
If $J$ is a closed ideal in $B$, then the kernel of the map $A \odot B \to A \odot (B/J)$ is equal to $A \odot J$.
Thus, the sequence
\[
0 \to A \odot J \to A \odot B \to A \odot (B/J) \to 0
\]
is exact.
The maximal tensor product is also exact in this sense:
The sequence 
\[
0 \to A \tensMax J \to A \tensMax B \to A \tensMax (B/J) \to 0
\]
is exact.
This is no longer true for the minimal tensor product, and one says that $A$ is \emph{exact} if taking minimal tensor products with $A$ preserves short exact sequences.
		
The projective tensor product of \ca{s} has the remarkable property of preserving exact sequences and isometric embeddings induced by sub-\ca{s}.
First, analogous to the minimal tensor product, if $A_1 \subseteq A$ and $B_1 \subseteq B$ are sub-\ca{s}, then the map $A_1 \odot B_1 \to A \odot B$ is isometric for the projective tensor products and induces a natural isometric embedding $A_1 \tensProj B_1 \to A \tensProj B$;
see \cite[Theorem~1]{GupJai20ProjTensProjCAlg}.
Second, analogous to the maximal tensor product, if $J \subseteq B$ is a closed ideal, then the sequence
\[
0 \to A \tensProj J \to A \tensProj B \to A \tensProj (B/J) \to 0
\]
is exact, that is, the map $A \tensProj J \to A \tensProj B$ is an isometric embedding and its image agrees with the kernel of the quotient map $A \tensProj B \to A \tensProj (B/J)$;
see \cite[Proposition~2.5]{Rya02IntroTensProdBSp}, 
see also \cite[Lemma~1]{GupJai20ProjTensProjCAlg}.

\medskip

%==========================================================================================
Next, we study closed, prime ideals in projective tensor products of \ca{s}.
By an \emph{ideal} we mean a ring-theoretic ideal, that is, an additive subgroup that is invariant under taking products on the left or right with elements from the containing algebra.
We note that ideals in \ca{s} are not necessarily subspaces;
see \cite[Examples~II.5.2.1]{Bla06OpAlgs}.

%==========================================================================================
We begin by generalizing the well-known result that the minimal tensor product of prime \ca{s} is prime to the setting of projective tensor products.

%==========================================================================================
\begin{lma}
\label{prp:TensProjPrime}
Let $A$ and $B$ be prime \ca{s}.
Then $A \tensProj B$ is prime.
\end{lma}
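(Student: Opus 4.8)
The plan is to show that if $A$ and $B$ are prime \ca{s}, then $A \tensProj B$ has no nonzero closed ideals $I$ and $J$ with $IJ = \{0\}$, where products and ideals are taken in the Banach $*$-algebra $A \tensProj B$. Since $A \odot B \subseteq A \tensProj B \subseteq A \tensMin B$ are dense inclusions, the natural strategy is to \emph{transfer} the question to the minimal tensor product, where primeness is classical, using slice maps. Concretely, given states $\varphi$ on $A$ and $\psi$ on $B$, the slice maps $L_\varphi \colon A \tensMin B \to B$ and $R_\psi \colon A \tensMin B \to A$ are well-defined, bounded, and restrict nicely to $A \tensProj B$; these will be the main tool to detect elementary tensors inside closed ideals.

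First I would reduce to the following claim: if $I \subseteq A \tensProj B$ is a nonzero closed ideal, then there exist a nonzero closed ideal $I_A \subseteq A$ and a nonzero closed ideal $I_B \subseteq B$ with $I_A \odot I_B \subseteq I$ (closure in the projective norm). To see this, pick a nonzero $x \in I$; writing $x$ as a projective-norm-convergent sum $\sum_n a_n \otimes b_n$, choose states $\varphi,\psi$ so that the slice $R_\psi(x) \in A$ is nonzero and then $L_\varphi$ applied appropriately produces a nonzero element of $A$. The point is that for $a \in A$, $b \in B$, the element $a x b'$ for suitable multipliers, together with slicing, yields that the closed ideal in $A$ generated by the ``first legs'' of $I$ and the closed ideal in $B$ generated by the ``second legs'' of $I$ both embed (projectively) into $I$; here I would use that $A \tensProj (\,\cdot\,)$ and $(\,\cdot\,) \tensProj B$ preserve isometric embeddings induced by sub-\ca{s} and preserve ideals, as recalled in the excerpt. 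A clean way to organize this: let $I_A$ be the closed linear span of $\{R_\psi(y) : y \in I,\ \psi \in B^*\}$ — this is a closed ideal of $A$ by the module property of slice maps — and similarly $I_B$; then bilinearity and continuity of the projective tensor product give $I_A \tensProj I_B \subseteq I$.

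Now suppose for contradiction that $I, J \subseteq A \tensProj B$ are nonzero closed ideals with $IJ = \{0\}$. Applying the claim, we obtain nonzero closed ideals $I_A, J_A \subseteq A$ and $I_B, J_B \subseteq B$ with $I_A \tensProj I_B \subseteq I$ and $J_A \tensProj J_B \subseteq J$. Since $A$ is prime, $I_A J_A \neq \{0\}$, so pick $a \in I_A$, $a' \in J_A$ with $aa' \neq 0$; since $B$ is prime, pick $b \in I_B$, $b' \in J_B$ with $bb' \neq 0$. Then $(a \otimes b)(a' \otimes b') = (aa') \otimes (bb')$ lies in $IJ = \{0\}$, forcing $(aa') \otimes (bb') = 0$ in $A \tensProj B$; but the projective tensor norm is a cross norm, so $\|(aa')\otimes(bb')\|_\pi = \|aa'\|\,\|bb'\| \neq 0$, a contradiction. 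Hence $A \tensProj B$ is prime.

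The main obstacle I anticipate is the first step — showing that a nonzero closed ideal of $A \tensProj B$ must contain a full projective tensor product $I_A \tensProj I_B$ of nonzero closed ideals. The subtlety is that elements of $A \tensProj B$ are infinite sums, so one cannot simply ``read off'' an elementary tensor from a nonzero element; one must genuinely use the slice maps and the fact (cited in the excerpt) that the projective tensor product behaves well with respect to ideals and sub-\ca{s}, so that $I \cap (A \odot B)$ is ``large enough'' after slicing. A secondary point to handle with care is that ideals in \ca{s} (and hence possibly in $A \tensProj B$) need not be self-adjoint or linear subspaces, so I would work throughout with the closed ideals they generate and verify that the slice-map construction indeed lands in honest closed ideals of $A$ and of $B$; the module identities $R_\psi(cyd) = c\,R_\psi(y)\,d$ for $c,d \in A$ make this routine once set up.
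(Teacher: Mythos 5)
Your overall architecture is the same as the paper's: assume $IJ=\{0\}$ for nonzero (closed) ideals, produce elements $a\otimes b\in I$ and $a'\otimes b'\in J$ with $a,a'$ and $b,b'$ ``linkable'' via primeness of $A$ and $B$, and contradict the cross-norm property. The paper obtains the required elementary tensors directly from a cited result of Gupta and Jain (\cite[Corollary~2]{GupJai20ProjTensProjCAlg}): every nonzero closed ideal of $A\tensProj B$ contains a nonzero elementary tensor. That citation is the entire content of the hard step, and it is exactly the step your proposal does not actually establish.

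The gap is in your Claim, specifically the assertion that if $I_A$ is the closed span of $\{R_\psi(y): y\in I,\ \psi\in B^*\}$ and $I_B$ is defined analogously, then ``bilinearity and continuity'' give $I_A\tensProj I_B\subseteq I$. This containment is false in general: slicing identifies a product ideal that \emph{contains} (the relevant part of) $I$, not one that $I$ contains, so the inclusion you need points the wrong way. Concretely, let $J\subsetneq A$ and $K\subsetneq B$ be nonzero, proper, closed ideals and let $I=(J\tensProj B)+(A\tensProj K)$, which is a proper closed (even prime) ideal of $A\tensProj B$. Since $R_\psi(x\otimes k)=\psi(k)x$ ranges over all of $A$ as $x$ does, your $I_A$ equals $A$, and similarly $I_B=B$; but $A\tensProj B\not\subseteq I$. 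What your argument actually requires is that $I$ contains \emph{some} nonzero elementary tensor (equivalently, some nonzero product ideal $\langle a\rangle\tensProj\langle b\rangle$), and this cannot be read off from slices of a single element $\sum_n a_n\otimes b_n$ by soft arguments -- it is the nontrivial input the paper imports from Gupta--Jain. Once that fact is granted, the remainder of your proof (choosing $x\in A$ with $a_1xa_2\neq 0$ by primeness, likewise in $B$, and using that $\|\cdot\|_\pi$ is a cross norm) is correct and coincides with the paper's argument.
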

\begin{proof}
The proof is based on ideas in the proof of \cite[Lemma~2.13]{BlaKir04PureInf}.
To reach a contradiction, let $I_1, I_2 \subseteq A \tensProj B$ be nonzero ideals such that $I_1I_2 = \{0\}$.
Let $\overline{I_1}$ and $\overline{I_2}$ denote the closures in~$A \tensProj B$.
By \cite[Corollary~2]{GupJai20ProjTensProjCAlg}, there exist nonzero elementary tensors $a_1 \otimes b_1 \in \overline{I_1}$ and $a_2 \otimes b_2 \in \overline{I_2}$.
Since $A$ is prime and $a_1,a_2 \neq 0$, we obtain $x \in A$ such that $a_1xa_2 \neq 0$.
Similarly, we get $y \in B$ such that $b_1yb_2 \neq 0$.
Then
\[
0 \neq (a_1xa_2) \otimes (b_1yb_2)
= (a_1 \otimes b_1) (x \otimes y) (a_2 \otimes b_2)
\in \overline{I_1}\cdot\overline{I_2}
\]
But $I_1I_2 = \{0\}$ implies $\overline{I_1} \cdot \overline{I_2} = \{0\}$, a contradiction.
\end{proof}
	
%==========================================================================================
The next result establishes a natural bijection between the set of closed, prime ideals in~$A \tensProj B$, and the Cartesian product of the spaces of closed, prime ideals in~$A$ and in~$B$.
The result is probably well-known to experts, but we could not locate it in the literature.
The analogous statement identifying closed, maximal ideals (and maximal modular ideals) in~$A \tensProj B$ with Cartesian products of respective ideals in~$A$ and~$B$ is \cite[Theorem~9]{GupJai20ProjTensProjCAlg}.
An analogous statement for closed, prime ideals in the operator space projective tensor product is \cite[Theorem~6]{JaiKum11IdlsOpSpProjTPCAlg}.
	
%==========================================================================================
\begin{prp}
\label{prp:PrimeIdlTensProj}
Let $A$ and $B$ be \ca{s}, and let $I \subseteq A \tensProj B$  be a closed, prime ideal.
Set
\[
J := \big\{ a \in A : a \odot B \subseteq I \big\}, \andSep
K := \big\{ b \in B : A \odot b \subseteq I \big\}.
\]
Then $J \subseteq A$ and $K \subseteq B$ are closed, prime ideals, and we have
\[
I = (J \tensProj B) + (A \tensProj K)
\subseteq A \tensProj B.
\]
		
Conversely, if $J \subseteq A$ and $K \subseteq B$ are closed, prime ideals, then $(J \tensProj B) + (A \tensProj K)$ is a closed, prime ideal in $A \tensProj B$.
\end{prp}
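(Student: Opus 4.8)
The plan is to prove the converse direction first, since it produces the quotient map used throughout. So suppose $J \subseteq A$ and $K \subseteq B$ are closed, prime ideals. Then $J$ and $K$ are proper, so $A/J$ and $B/K$ are nonzero \ca{s}, and they are prime because $J$ and $K$ are prime. I would then consider the surjective composition of quotient maps
\[
q \colon A \tensProj B \longrightarrow (A/J) \tensProj B \longrightarrow (A/J) \tensProj (B/K).
\]
By exactness of the projective tensor product, the first map has kernel $J \tensProj B$, the second has kernel $(A/J) \tensProj K$, and the first map restricts to a surjection $A \tensProj K \to (A/J) \tensProj K$; a short computation with preimages then gives $\ker q = (J \tensProj B) + (A \tensProj K)$. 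In particular this sum is closed, being a kernel, and
\[
(A \tensProj B) \big/ \big( (J \tensProj B) + (A \tensProj K) \big) \;\cong\; (A/J) \tensProj (B/K),
\]
which is prime by \autoref{prp:TensProjPrime}. Hence $(J \tensProj B) + (A \tensProj K)$ is a closed, prime ideal of $A \tensProj B$.

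For the forward direction, let $I \subseteq A \tensProj B$ be a closed, prime ideal and define $J$ and $K$ as in the statement. Closedness of $J$ is immediate, since $a \mapsto a \otimes b$ is a scalar multiple of an isometry and $I$ is closed. To see that $J$ is a two-sided ideal, I would write a given $b \in B$ as a product $b = b_1 b_2$ (Cohen factorization), so that $(a'a) \otimes b = (a' \otimes b_1)(a \otimes b_2) \in I$ whenever $a \in J$, and symmetrically for $a a'$; since $A$ is a \ca{}, the ideal $J$ is then automatically self-adjoint, and it is proper because $J = A$ would force $I = A \tensProj B$. For primeness, let $\mathfrak a, \mathfrak b \subseteq A$ be closed ideals with $\mathfrak a \mathfrak b \subseteq J$; then the closed ideals $\mathfrak a \tensProj B$ and $\mathfrak b \tensProj B$ of $A \tensProj B$ satisfy $(\mathfrak a \tensProj B)(\mathfrak b \tensProj B) \subseteq \overline{(\mathfrak a \mathfrak b) \odot B} \subseteq J \tensProj B \subseteq I$, so primeness of $I$ yields, say, $\mathfrak a \tensProj B \subseteq I$ and hence $\mathfrak a \subseteq J$. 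The same argument handles $K$. The inclusion $(J \tensProj B) + (A \tensProj K) \subseteq I$ is clear from the definitions of $J$ and $K$ and the fact that $I$ is closed.

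It remains to prove the reverse inclusion $I \subseteq (J \tensProj B) + (A \tensProj K)$, which I expect to be the main obstacle. The plan is to pass to the quotient $q$ from the converse direction and show $q(I) = \{0\}$. Since $\ker q = (J \tensProj B) + (A \tensProj K) \subseteq I$, the image $q(I)$ is a closed, prime ideal of $(A/J) \tensProj (B/K)$ (the image under a surjection of a prime ideal containing the kernel is again closed and prime), and unwinding the definitions of $J$ and $K$ together with $q^{-1}(q(I)) = I$ shows that $q(I)$ is \emph{faithful}, meaning $\{ \bar a \in A/J : \bar a \odot (B/K) \subseteq q(I) \} = \{0\}$ and $\{ \bar b \in B/K : (A/J) \odot \bar b \subseteq q(I) \} = \{0\}$. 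So the whole statement reduces to showing: a closed, prime, faithful ideal $I'$ of a projective tensor product $A' \tensProj B'$ of \ca{s} must be zero.

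For that final reduction, suppose $I' \neq \{0\}$. By \cite[Corollary~2]{GupJai20ProjTensProjCAlg} it contains a nonzero elementary tensor $a_0 \otimes b_0$, and since $I'$ is an ideal it contains the closed linear span of all $(x a_0 x') \otimes (y b_0 y')$, hence contains $\mathfrak a_0 \tensProj \mathfrak b_0$, where $\mathfrak a_0 \subseteq A'$ and $\mathfrak b_0 \subseteq B'$ are the nonzero closed ideals generated by $a_0$ and $b_0$. Then $(\mathfrak a_0 \tensProj B')(A' \tensProj \mathfrak b_0) \subseteq \mathfrak a_0 \tensProj \mathfrak b_0 \subseteq I'$, so primeness forces $\mathfrak a_0 \tensProj B' \subseteq I'$ or $A' \tensProj \mathfrak b_0 \subseteq I'$; either one contradicts faithfulness, as it would give $\mathfrak a_0 = \{0\}$ or $\mathfrak b_0 = \{0\}$. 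This argument is essentially the proof of \autoref{prp:TensProjPrime} transplanted into the quotient; apart from it, everything above is bookkeeping with the exact sequences and isometric embeddings for the projective tensor product recalled earlier.
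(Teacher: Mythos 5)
Your proposal is correct and follows essentially the same route as the paper: you verify that $J$ and $K$ are closed prime ideals via the factorization $b=b_1b_2$ and the product-ideal trick, identify $(J \tensProj B)+(A \tensProj K)$ as the kernel of the quotient onto $(A/J)\tensProj(B/K)$ using exactness of $\tensProj$, derive the reverse inclusion from the existence of a nonzero elementary tensor in a nonzero closed ideal (\cite[Corollary~2]{GupJai20ProjTensProjCAlg}) together with primeness, and obtain the converse from \autoref{prp:TensProjPrime}. The only cosmetic differences are that you run the final contradiction downstairs in the quotient (via ``faithfulness'') where the paper lifts the elementary tensor back to $I$, and that you check primeness of $J$ only for closed ideals, which suffices since one may always pass to closures.
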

\begin{proof}
The proof is based on ideas in the proof of \cite[Lemma~2.13]{BlaKir04PureInf}.
First, it is straightforward to check that $J$ is a closed, additive subgroup.
To show that~$J$ is a left ideal, let $a \in J$ and $a_1 \in A$.
Given $b \in B$, we need to verify $(a_1a) \otimes b \in I$.
Since every element in a \ca{} can be written as a product of two elements (\cite[Proposition~II.3.2.1]{Bla06OpAlgs}), we find $b_1,b_2 \in B$ such that $b = b_1b_2$.
Then
\[
(a_1a) \otimes b 
= (a_1a) \otimes (b_1b_2)
= (a_1 \otimes b_1)(a \otimes b_2)
\in I.
\]
Similarly, one shows that $J$ is a right ideal.
To show that $J$ is a prime ideal, let $J_1,J_1 \subseteq A$ be ideals such that $J_1J_2 \subseteq J$.
Let $\overline{J_1}$ and $\overline{J_2}$ denote the closures in~$A$.
Since $J$ is closed, we have $\overline{J_1}\overline{J_2} \subseteq J$.
Then $\overline{J_1} \tensProj B$ and $\overline{J_2} \tensProj B$ are ideals in $A \tensProj B$ such that
\[
\big( \overline{J_1} \tensProj B \big)\big( \overline{J_2} \tensProj B \big)
\subseteq \big( \overline{J_1}\overline{J_2} \big) \tensProj B
\subseteq J \tensProj B
\subseteq I.
\]
Since $I$ is prime, we get $\overline{J_1} \tensProj B \subseteq I$ or $\overline{J_2} \tensProj B \subseteq I$, which entails $J_1 \subseteq \overline{J_1} \subseteq J$ or $J_2 \subseteq \overline{J_2} \subseteq J$.
Analogously, it follows that $K$ is a closed, prime ideal in~$B$.
		
It is clear that $I$ contains $(J \tensProj B) + (A \tensProj K)$.
To show the other inclusion, we consider the natural map $\pi \colon A \tensProj B \to (A/J) \tensProj (B/K)$.
Since the projective tensor product is exact (see \cite[Proposition~2.5]{Rya02IntroTensProdBSp}), the kernel of $\pi$ is $(J \tensProj B) + (A \tensProj K)$.
We will show that $\pi(I)=\{0\}$.

Since $\pi$ is a quotient map, $\pi(I)$ is a closed ideal in $(A/J) \tensProj (B/K)$.
To reach a contradiction, assume that $\pi(I) \neq \{0\}$.
Then $\pi(I)$ contains a nonzero, elementary tensor, say $x \otimes y \in \pi(I)$ for $x \in A/J$ and $y \in B/K$;
see \cite[Corollary~2]{GupJai20ProjTensProjCAlg}.
Choosing lifts $a \in A$ and $b \in B$, and using that the kernel of $\pi$ is contained in $I$, we deduce that $a \otimes b \in I$.
Let $J_1 \subseteq A$ and $K_1 \subseteq B$ denote the closed ideals generated by $a$ and $b$, respectively.
Then
\[
\big( J_1 \tensProj B \big) \big( A \tensProj K_1 \big)
\subseteq J_1 \tensProj K_1
\subseteq I.
\]
Using that $I$ is prime, it follows that $J_1 \tensProj B \subseteq I$ or $A \tensProj K_2 \subseteq I$, and therefore $a \in J$ or $b \in K$, which implies that $\pi(a \otimes b)=0$, a contradiction.
%Thus
{By the above identification of the kernel of $\pi$ with $(J \tensProj B) + (A \tensProj K)$ we therefore have}
\[
I \subseteq \ker(\pi) = (J \tensProj B) + (A \tensProj K).
\]
		
Next, assume that $J \subseteq A$ and $K \subseteq B$ are closed, prime ideals.
As above, we consider the map $\pi \colon A \tensProj B \to (A/J) \tensProj (B/K)$, whose kernel is $(J \tensProj B) + (A \tensProj K)$.
By \autoref{prp:TensProjPrime}, the algebra $(A/J) \tensProj (B/K)$ is prime, and therefore the kernel of $\pi$ is a closed, prime ideal.
\end{proof}

%==========================================================================================
An ideal $I$ in a ring $R$ is said to be semiprime if for every ideal $J$ in $R$ one has $J \subseteq I$ whenever $J^2 \subseteq I$.
It is known that an ideal is semiprime if and only if it is the intersection of some family of prime ideals;
see \cite[Section~10]{Lam01FirstCourse2ed}.
In \cite[Corollary~5.5]{GarKitThi23arX:SemiprimeIdls} it is shown that semiprime (not necessarily closed) ideals in \ca{s} are self-adjoint subspaces.
With view to the next result, it is therefore natural to ask if semiprime (not necessarily closed) ideals in projective tensor products of \ca{s} are self-adjoint subspaces.
	
%==========================================================================================
\begin{cor}
\label{prp:SemiprimeIdlTensProj}
Let $A$ and $B$ be \ca{s}.
Then every closed, semiprime ideal in $A \tensProj B$ is a self-adjoint subspace.
\end{cor}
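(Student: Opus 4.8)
The plan is to reduce the statement to two facts: first, that every closed prime ideal of $A \tensProj B$ is a self-adjoint subspace, and second, that every closed, semiprime ideal of $A \tensProj B$ is the intersection of the closed prime ideals containing it. Granting both, the corollary follows at once: by \cite[Section~10]{Lam01FirstCourse2ed} a semiprime ideal is an intersection of prime ideals, the second fact lets us take these primes to be closed, the first fact makes each of them a self-adjoint subspace, and an arbitrary intersection of self-adjoint subspaces is again a self-adjoint subspace.

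For the first fact, let $P \subseteq A \tensProj B$ be a closed prime ideal. By \autoref{prp:PrimeIdlTensProj} we may write $P = (J \tensProj B) + (A \tensProj K)$ with $J \subseteq A$ and $K \subseteq B$ closed prime ideals. Since prime ideals are semiprime, \cite[Corollary~5.5]{GarKitThi23arX:SemiprimeIdls} shows that $J$ and $K$ are self-adjoint subspaces, hence ordinary closed two-sided ideals of the \ca{s} $A$ and $B$. Therefore $(J \odot B)^\ast = J^\ast \odot B^\ast = J \odot B$, so its closure $J \tensProj B$ is self-adjoint; moreover $J \tensProj B$ is a linear subspace, because it is a closed ideal and $A \tensProj B$ has a bounded approximate identity (for instance $(e_\lambda \otimes f_\mu)$, for bounded approximate identities $(e_\lambda)$ and $(f_\mu)$ of $A$ and $B$), so that scalar multiples of elements of a closed ideal remain in the ideal. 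The same applies to $A \tensProj K$, and hence to the sum $P$.

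The second fact is the heart of the argument and the step I expect to be the main obstacle. The difficulty is that \cite[Section~10]{Lam01FirstCourse2ed} only yields a decomposition $I = \bigcap_\alpha P_\alpha$ into possibly non-closed primes, and one must show that no non-closed prime is needed. My approach would be to embed $A \tensProj B$ as a dense $\ast$-subalgebra of the \ca{} $A \tensMin B$, using the injectivity of $A \tensProj B \hookrightarrow A \tensMin B$ recalled above. Writing $\overline{I}$ for the closure of $I$ in $A \tensMin B$ --- a closed ideal of a \ca{}, hence an intersection $\bigcap_\gamma \mathfrak{P}_\gamma$ of primitive ideals of $A \tensMin B$ --- each $\mathfrak{P}_\gamma \cap (A \tensProj B)$ is a closed ideal of $A \tensProj B$ which is self-adjoint, and which is also prime since any closed ideals $C_1,C_2$ of $A \tensProj B$ with $C_1 C_2 \subseteq \mathfrak{P}_\gamma$ have $A \tensMin B$-closures satisfying $\overline{C_1}\,\overline{C_2} \subseteq \mathfrak{P}_\gamma$, so that one of $\overline{C_1},\overline{C_2}$ lies in the prime ideal $\mathfrak{P}_\gamma$. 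Thus $\overline{I} \cap (A \tensProj B)$ is an intersection of closed prime ideals, and everything reduces to the equality $I = \overline{I} \cap (A \tensProj B)$, i.e.\ to the assertion that a closed, semiprime ideal of $A \tensProj B$ cannot be enlarged by closing it up in $A \tensMin B$ and intersecting back. To establish this I would use \autoref{prp:PrimeIdlTensProj} once more, together with exactness of the projective tensor product and the injectivity $A \tensProj B \hookrightarrow A \tensMin B$, to identify the $A \tensMin B$-closure of a closed prime $(J \tensProj B) + (A \tensProj K)$ of $A \tensProj B$ with $(J \tensMin B) + (A \tensMin K)$ and to check that it meets $A \tensProj B$ again in $(J \tensProj B) + (A \tensProj K)$; the remaining and genuinely delicate point --- propagating this from closed primes to the semiprime ideal $I$, so that the possibly non-closed primes in Lam's decomposition can be discarded --- is precisely where the semiprimeness of $I$ must enter in an essential way.
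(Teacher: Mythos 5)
Your treatment of closed \emph{prime} ideals is exactly the paper's argument: by \autoref{prp:PrimeIdlTensProj} such an ideal equals $(J \tensProj B) + (A \tensProj K)$ for closed (prime) ideals $J \subseteq A$ and $K \subseteq B$, and one concludes because closed ideals of \ca{s} are self-adjoint subspaces --- a classical fact, so the appeal to \cite{GarKitThi23arX:SemiprimeIdls} is unnecessary, as is the approximate-identity argument for $J \tensProj B$ being a subspace (it is by definition the closure of the subspace $J \odot B$). So the second half of your proposal is correct and coincides with the paper's proof.

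The problem is the reduction, which you yourself leave open: you never establish that a closed, semiprime ideal of $A \tensProj B$ is an intersection of \emph{closed} prime ideals, and your proposed detour through $A \tensMin B$ does not close this gap. That detour rests on the unproven claim that $I = \overline{I} \cap (A \tensProj B)$ (closure taken in $A \tensMin B$) for every closed semiprime ideal $I$; but granting that claim is essentially granting the conclusion, since it exhibits $I$ as the intersection of the ideals $\mathfrak{P}_\gamma \cap (A \tensProj B)$, which you have already observed are closed, prime and self-adjoint. In other words, the auxiliary statement is at least as strong as the one you set out to prove, so nothing has been reduced. For comparison, the paper handles this step in a single sentence: a closed semiprime ideal is an intersection of closed prime ideals (this is the content it extracts from the characterization of semiprime ideals in \cite[Section~10]{Lam01FirstCourse2ed}), and being a self-adjoint subspace passes to arbitrary intersections; it then immediately invokes \autoref{prp:PrimeIdlTensProj}. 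Your instinct that the primes produced by Lam's theorem need not be closed is not unreasonable, but as written your proposal only proves the corollary for ideals already known to be intersections of closed primes, and the mechanism you propose for supplying that hypothesis is circular.
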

\begin{proof}
It suffices to show the result for closed, prime ideals, since being a self-adjoint subspace passes to intersections.
So let $I \subseteq A \tensProj B$ be a closed, prime ideal.
By \autoref{prp:PrimeIdlTensProj}, there are closed, prime ideals $J \subseteq A$ and $K \subseteq B$ such that
\[
I = (J \tensProj B) + (A \tensProj K).
\]
Since closed ideals in \ca{s} are self-adjoint subspaces, it follows that $I$ has the claimed properties.
\end{proof}
	
%==========================================================================================
The next result is mostly contained in \cite[Lemma~2.13]{BlaKir04PureInf}.
	
%==========================================================================================
\begin{prp}
\label{prp:PrimeIdlTensMin}
Let $A$ and $B$ be \ca{s}, and let $I \subseteq A \otimes B$  be a closed, prime ideal.
Set
\[
J := \big\{ a \in A : a \odot B \subseteq I \big\}, \andSep
K := \big\{ b \in B : A \odot b \subseteq I \big\}.
\]
Then $J \subseteq A$ and $K \subseteq B$ are closed, prime ideals, and we have
\[
(J \otimes B) + (A \otimes K)
\subseteq I
\subseteq I_{J,K} := \ker\big( A \otimes B \to (A/J)\otimes(B/K) \big).
\]
Moreover, 
\[
\big( J \otimes B + A \otimes K \big) \cap \big( A \odot B \big)
= I \cap A \odot B
= I_{J,K} \cap A \odot B 
= J \odot B + A \odot K
\]
and
\[
\big( J \otimes B + A \otimes K \big) \cap \big( A \tensProj B \big)
= I \cap A \tensProj B
= I_{J,K} \cap A \tensProj B 
= J \tensProj B + A \tensProj K.
\]

If $I = \overline{ I \cap A \tensProj B}$ (in particular, if $I = \overline{ I \cap A \odot B}$), then
\[
J \otimes B + A \otimes K 
= I.
\]
\end{prp}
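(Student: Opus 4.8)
The plan is to reduce the bulk of the statement to \autoref{prp:PrimeIdlTensProj}, the analogous result for the projective tensor product, and to take the two‑sided sandwich $(J\tensMin B)+(A\tensMin K)\subseteq I\subseteq I_{J,K}$ together with the primality of $J$ and $K$ from \cite[Lemma~2.13]{BlaKir04PureInf}. The containment $(J\tensMin B)+(A\tensMin K)\subseteq I$ is immediate, exactly as in the proof of \autoref{prp:PrimeIdlTensProj}: by definition $J\odot B,A\odot K\subseteq I$, the closed ideal generated by each is its minimal‑norm closure, and a sum of closed ideals in a \ca{} is closed. The reverse containment $I\subseteq I_{J,K}$, i.e.\ $\pi(I)=\{0\}$ for the quotient map $\pi\colon A\tensMin B\to(A/J)\tensMin(B/K)$, is the delicate point, and I expect it to be the main obstacle. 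In the projective setting one knows $\ker\pi=(J\tensProj B)+(A\tensProj K)\subseteq I$ by exactness of $\tensProj$, so a lift of any nonzero elementary tensor in $\pi(I)$ already lies in $I$ and primality forces a contradiction; in the minimal setting $\ker\pi=I_{J,K}$ can be strictly larger than $(J\tensMin B)+(A\tensMin K)$ (failure of exactness of $\tensMin$), so this lifting step breaks down and one has to appeal to the finer argument of \cite[Lemma~2.13]{BlaKir04PureInf}.

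For the intersection identities the key point is that $I\cap(A\tensProj B)$ is a closed, prime ideal of $A\tensProj B$: it is closed because $A\tensProj B\hookrightarrow A\tensMin B$ is continuous, and if $L_1L_2\subseteq I\cap(A\tensProj B)$ for ideals $L_i\subseteq A\tensProj B$, then passing to minimal‑norm closures gives $\overline{L_1}\,\overline{L_2}\subseteq\overline{L_1L_2}\subseteq I$, so primality of $I$ in $A\tensMin B$ places some $\overline{L_i}$, hence $L_i$, inside $I\cap(A\tensProj B)$. Applying \autoref{prp:PrimeIdlTensProj} to $I\cap(A\tensProj B)$ — and observing that the ideals it produces are precisely our $J$ and $K$, since $a\odot B\subseteq A\tensProj B$ automatically — gives at once that $J,K$ are closed prime ideals and that $I\cap(A\tensProj B)=J\tensProj B+A\tensProj K$. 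Intersecting once more with $A\odot B$ and using the algebraic exactness recalled above, namely $\ker\big(A\odot B\to(A/J)\odot(B/K)\big)=J\odot B+A\odot K$, yields $I\cap(A\odot B)=J\odot B+A\odot K$. The corresponding identities for $I_{J,K}$ follow from exactness of $\tensProj$ and injectivity of $(A/J)\tensProj(B/K)\hookrightarrow(A/J)\tensMin(B/K)$, giving $I_{J,K}\cap(A\tensProj B)=\ker\big(A\tensProj B\to(A/J)\tensProj(B/K)\big)=J\tensProj B+A\tensProj K$ and likewise $I_{J,K}\cap(A\odot B)=J\odot B+A\odot K$; finally $(J\tensMin B+A\tensMin K)\cap(A\tensProj B)$ is squeezed between $J\tensProj B+A\tensProj K$ (trivially contained, since $J\tensProj B\subseteq J\tensMin B$) and $I_{J,K}\cap(A\tensProj B)$ (using $J\tensMin B+A\tensMin K\subseteq I_{J,K}$), which already coincide. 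This establishes all four coincidences, both for $A\odot B$ and for $A\tensProj B$.

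For the displayed equality, assume $I=\overline{I\cap(A\tensProj B)}$ with closure taken in $A\tensMin B$; the parenthetical hypothesis $I=\overline{I\cap(A\odot B)}$ reduces to this since $A\odot B\subseteq A\tensProj B$. By the previous paragraph $I\cap(A\tensProj B)=J\tensProj B+A\tensProj K$, and since $J\odot B$ is dense in $J\tensProj B$ for the projective norm — hence also for the smaller minimal norm — and dense in $J\tensMin B$ for the minimal norm, the minimal‑norm closure of $J\tensProj B$ inside $A\tensMin B$ is $J\tensMin B$, and similarly $\overline{A\tensProj K}=A\tensMin K$; taking closures and using once more that sums of closed ideals in a \ca{} are closed gives $I=\overline{J\tensProj B+A\tensProj K}=J\tensMin B+A\tensMin K$, as wanted. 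I note that this final step is independent of the inclusion $I\subseteq I_{J,K}$: that inclusion enters the statement only through the sandwich $(J\tensMin B+A\tensMin K)\subseteq I\subseteq I_{J,K}$ used in the intersection identities, where both ends are already known to agree after intersecting with $A\odot B$ or with $A\tensProj B$.
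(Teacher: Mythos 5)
Your argument is correct, and for the middle block of identities it takes a genuinely different route from the paper. For the sandwich $(J\tensMin B)+(A\tensMin K)\subseteq I\subseteq I_{J,K}$ and the primality of $J$ and $K$, both you and the paper ultimately rest on \cite[Lemma~2.13]{BlaKir04PureInf}; your remark that the lifting step from the proof of \autoref{prp:PrimeIdlTensProj} breaks down in the minimal setting (because there one would need $\ker\pi = I_{J,K}\subseteq I$, which is precisely the containment being proved) is an accurate diagnosis of why the external reference is needed, and is more explicit than the paper's ``as in the proof of \autoref{prp:PrimeIdlTensProj}''. Where you diverge is in the intersection identities: the paper computes $I_{J,K}\cap(A\odot B)=(J\odot B)+(A\odot K)$ by exactness of the algebraic tensor product and then squeezes the chain $(J\odot B)+(A\odot K)\subseteq\big(J\tensMin B+A\tensMin K\big)\cap(A\odot B)\subseteq I\cap(A\odot B)\subseteq I_{J,K}\cap(A\odot B)$, so its proof of these identities leans on the hard half $I\subseteq I_{J,K}$ of the sandwich. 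You instead verify that $I\cap(A\tensProj B)$ is a closed, prime ideal of $A\tensProj B$ and apply \autoref{prp:PrimeIdlTensProj} to it, observing that the ideals it produces are exactly $J$ and $K$; this yields $I\cap(A\tensProj B)=J\tensProj B+A\tensProj K$ (and then the $A\odot B$ and $I_{J,K}$ versions by exactness of $\odot$ and $\tensProj$ and injectivity of $(A/J)\tensProj(B/K)\to(A/J)\tensMin(B/K)$) without invoking $I\subseteq I_{J,K}$, and as a by-product reproves that $J$ and $K$ are closed and prime. The paper's squeeze is shorter; your route is more self-contained for that portion and cleanly isolates the one place where \cite[Lemma~2.13]{BlaKir04PureInf} is indispensable. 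The final step, taking minimal-norm closures of $J\tensProj B+A\tensProj K$ and using that the sum of closed ideals in a \ca{} is closed, is identical in both.
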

\begin{proof}
As in the proof of \autoref{prp:PrimeIdlTensProj}, we see that $J$ and $K$ are closed, prime ideals, and that $(J \otimes B) + (A \otimes K) \subseteq I \subseteq I_{J,K}$.
(This is also contained in \cite[Lemma~2.13]{BlaKir04PureInf}.)
		
Using that the algebraic tensor product is exact, we see that the kernel of the map $A \odot B \to (A/J) \odot (B/K)$ is $(J \odot B) + (A \odot K)$, which implies that
\[
I_{J,K} \cap (A \odot B)
= (J \odot B) + (A \odot K).
\]
The inclusions
\[
(J \odot B) + (A \odot K)
\subseteq \big( (J \otimes B) + (A \otimes K) \big) \cap \big( A \odot B \big)
\subseteq I \cap (A \odot B)
\subseteq I_{J,K} \cap (A \odot B)
\]
are clear, and therefore are all equalities.
The equalities for the intersection with $A \tensProj B$ follow analogously, using that the projective tensor product is exact.
		
Lastly, let us assume that $I = \overline{ I \cap A \tensProj B}$.
Using at the last step that the sum of closed ideals in a \ca{} is closed, we get
\[
I
= \overline{ I \cap A \tensProj B }
= \overline{ J \tensProj B + A \tensProj K}
= \overline{ J \tensMin B + A \tensMin K}
= J \otimes B + A \otimes K,
\]
as desired.
\end{proof}

%==========================================================================================
A pair of \ca{s} $A$ and $B$ is said to satisfy \emph{Tomiyama's property (F)} if closed ideals in $A \otimes B$ are separated by states of the form $\varphi\otimes\psi$, where $\varphi$ is a pure state on $A$ and $\psi$ is a pure state on $B$, \cite{Tom67FubiniTensProdCAlgs}.
We say that an ideal in the minimal tensor product $A \otimes B$ of two \ca{s} is a product ideal if it is of the form $I \otimes J$ for an ideal $I \subseteq A$ and an ideal $J \subseteq B$.

The next result characterizes Tomiyama's property (F) in terms of the ISP for the inclusion of the algebraic and the projective tensor product in the minimal tensor product of \ca{s}.
For the algebraic tensor product this is essentially contained in \cite[Proposition~2.16]{BlaKir04PureInf}, see also 
\cite[Proposition~5.1]{Laz10IdlsMinTensProdCAlgs}, and our contribution is the characterization in terms of the projective tensor product.
This is crucial for showing that if the Cartesian product $G \times H$ of two locally compact groups has the ISP, then the pair of \ca{s} $C^*_\red(G)$ and $C^*_\red(H)$ has Tomiyama's property~(F);
see \autoref{prp:ProductGp}.
	
%==========================================================================================
\begin{thm}
\label{prp:CharPropF}
Let $A$ and $B$ be \ca{s}.
Then the following are equivalent:
\begin{enumerate}
\item
The pair ($A$, $B$) satisfies Tomiyama's property (F).
\item
Every closed ideal in $A \tensMin B$ is the closure of the sum of all contained product ideals.
\item
The inclusion $A \odot B \subseteq A \tensMin B$ has the ideal separation property, that is,  closed ideals $I_1,I_2 \subseteq A \tensMin B$ satisfy $I_1 \subseteq I_2$ whenever $I_1 \cap (A \odot B) \subseteq I_2 \cap (A \odot B)$.
\item
We have $I = \overline{I \cap (A \odot B)}$ for every closed ideal $I \subseteq A \tensMin B$.
\item
The inclusion $A \tensProj B \subseteq A \tensMin B$ has the ideal separation property. 
\item
We have $I = \overline{I \cap (A \tensProj B)}$ for every closed ideal $I \subseteq A \tensMin B$.
\item
We have $P = \overline{P \cap (A \tensProj B)}$ for every closed, prime ideal $P \subseteq A \tensMin B$.
\end{enumerate}
\end{thm}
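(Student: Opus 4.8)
The plan is to take $(1)\Leftrightarrow(2)\Leftrightarrow(3)\Leftrightarrow(4)$ as (essentially) known and to connect the three `new' conditions (5)--(7) to this block, via the scheme $(4)\Rightarrow(6)\Leftrightarrow(5)$ and $(6)\Rightarrow(7)\Rightarrow(4)$, with $(3)\Leftrightarrow(4)$ and $(5)\Leftrightarrow(6)$ being formal consequences of \autoref{prp:CharISPDense}. Concretely, both $A\odot B$ and $A\tensProj B$ are dense $*$-subalgebras of $A\tensMin B$ --- for the projective tensor product one uses, as recalled before the theorem, that $A\tensProj B\to A\tensMin B$ is an injective contractive $*$-homomorphism with dense range --- so \autoref{prp:CharISPDense}, applied once with the dense subalgebra $A\odot B$ and once with $A\tensProj B$, gives $(3)\Leftrightarrow(4)$ and $(5)\Leftrightarrow(6)$. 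The block $(1)\Leftrightarrow(2)\Leftrightarrow(3)\Leftrightarrow(4)$ is essentially \cite[Proposition~2.16]{BlaKir04PureInf}, see also \cite[Proposition~5.1]{Laz10IdlsMinTensProdCAlgs}: property~(F) means that every closed ideal of $A\tensMin B$ is an intersection of the ideals $I_{P,Q}=\ker\big(A\tensMin B\to(A/P)\tensMin(B/Q)\big)$ with $P\subseteq A$, $Q\subseteq B$ primitive, and, using exactness of the algebraic tensor product, this is in turn equivalent to the reconstruction property~(4) and to its reformulation~(2).

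The implication $(4)\Rightarrow(6)$ is immediate from $A\odot B\subseteq A\tensProj B$: for every closed ideal $I\subseteq A\tensMin B$ one has $\overline{I\cap(A\odot B)}\subseteq\overline{I\cap(A\tensProj B)}\subseteq I$, and the outer terms coincide by~(4). The implication $(6)\Rightarrow(7)$ is trivial since prime ideals are closed. The heart of the argument is $(7)\Rightarrow(4)$, and the tool is \autoref{prp:PrimeIdlTensMin}. Given a closed, prime ideal $P\subseteq A\tensMin B$, set $J=\{a\in A:a\odot B\subseteq P\}$ and $K=\{b\in B:A\odot b\subseteq P\}$; these are closed, prime ideals of $A$ and~$B$. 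Condition~(7) says $P=\overline{P\cap(A\tensProj B)}$, so the last assertion of \autoref{prp:PrimeIdlTensMin} yields $P=(J\tensMin B)+(A\tensMin K)$, while its `Moreover' part gives $P\cap(A\odot B)=(J\odot B)+(A\odot K)$; taking closures in $A\tensMin B$ --- and using $\overline{J\odot B}=J\tensMin B$, $\overline{A\odot K}=A\tensMin K$, and that a sum of two closed ideals of a \ca{} is closed --- we get $P=\overline{P\cap(A\odot B)}$. Running the same computation on the ideal $I_{J,K}$ for arbitrary closed, prime ideals $J\subseteq A$ and $K\subseteq B$ (which is itself closed and prime, since $(A/J)\tensMin(B/K)$ is prime by the fact recalled before \autoref{prp:TensProjPrime}) shows that under~(7) one has $I_{J,K}=(J\tensMin B)+(A\tensMin K)$; together with the sandwich $P\subseteq I_{J,K}$ from \autoref{prp:PrimeIdlTensMin} this identifies the closed, prime ideals of $A\tensMin B$ precisely with the ideals $I_{J,K}$, $J\subseteq A$ and $K\subseteq B$ closed prime.

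Since every closed ideal of a \ca{} is the intersection of the primitive ideals containing it, and primitive ideals are prime, condition~(7) thus forces every closed ideal of $A\tensMin B$ to be an intersection of ideals of the form $I_{J,K}$; by the classical characterization of property~(F) (again \cite[Proposition~2.16]{BlaKir04PureInf} and \cite[Proposition~5.1]{Laz10IdlsMinTensProdCAlgs}) this is exactly~(1), hence~(4), closing the circle. I expect this final passage --- from a reconstruction statement valid on all prime ideals to one valid on all closed ideals --- to be the main obstacle: it cannot be obtained by soft means, because a closed ideal $P$ containing $\overline{I\cap(A\odot B)}$ need not contain $I$, and the upgrade genuinely uses the structural input of \autoref{prp:PrimeIdlTensMin} that closed prime ideals of $A\tensMin B$ have `product-sum' form once~(7) holds. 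A more self-contained alternative for $(7)\Rightarrow(1)$ would be to verify property~(F) directly once the closed prime ideals have been identified: a state $\varphi\otimes\psi$ with $\varphi$, $\psi$ pure, $\ker\pi_\varphi=P$ and $\ker\pi_\psi=Q$ annihilates a closed ideal $L$ precisely when $L\subseteq I_{P,Q}$, so such states separate distinct closed ideals.
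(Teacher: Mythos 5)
Your proposal follows essentially the same route as the paper: the equivalences $(3)\Leftrightarrow(4)$ and $(5)\Leftrightarrow(6)$ via \autoref{prp:CharISPDense}, the easy chain $(2)\Rightarrow(4)\Rightarrow(6)\Rightarrow(7)$, and the key closing step that~(7) together with \autoref{prp:PrimeIdlTensMin} forces every closed, prime ideal of $A \tensMin B$ to have the form $(J \tensMin B)+(A \tensMin K)$, which is condition~(iii) of \cite[Proposition~2.16]{BlaKir04PureInf} and hence yields property~(F). The one caveat is your final gloss that every closed ideal becomes an intersection of ideals $I_{J,K}$ with $J,K$ closed prime and that this ``is exactly'' the characterization of property~(F) you stated in terms of \emph{primitive} $P,Q$ --- the associated ideals of a closed prime (even primitive) ideal of $A \tensMin B$ need only be closed prime, so it is cleaner (and is what the paper does) to invoke condition~(iii) of \cite[Proposition~2.16]{BlaKir04PureInf} directly once the product-sum form of closed prime ideals is established.
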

\begin{proof}
The equivalence of~(1) and~(2) is shown in \cite[Proposition~2.16]{BlaKir04PureInf}.
The equivalence of~(3) and~(4) follows from \autoref{prp:CharISPDense} using that $A \odot B$ is dense in $A \otimes B$.
Similarly we see that~(5) and~(6) are equivalent.
It is also easy to see that(2) implies~(4), that~(4) implies~(6), and that~(6) implies~(7).
		
Let us show that~(7) implies~(1).
Given a closed, prime ideal $P \subseteq A \otimes B$, using the assumption $P = \overline{P \cap (A \tensProj B)}$, it follows from \autoref{prp:PrimeIdlTensMin} that $P = (J \tensMin B) + (A \tensMin K)$ for closed, prime ideals $I \subseteq A$ and $K \subseteq B$.
This verifies condition~(iii) in \cite[Proposition~2.16]{BlaKir04PureInf}, which then shows that the pair ($A$,$B$) satisfies Tomiyama's property~(F).
\end{proof}
	
%==========================================================================================
\begin{rmk}
Consider the following statements for a closed ideal $I \subseteq A \tensMin B$:
\begin{enumerate}
\item
$I$ is the closure of the sum of all product ideals contained in $I$.
\item
$I = \overline{I \cap (A \odot B)}$.
\item
$I = \overline{I \cap (A \tensProj B)}$.
\end{enumerate}
If a closed ideal satisfies~(1), then it satisfies~(2), which in turn implies that it satisfies~(3).
Further, \autoref{prp:CharPropF} shows that if \emph{all} closed ideals in $A \tensMin B$ satisfy~(3), then the pair ($A$,$B$) satisfies Tomiyama's property~(F) and consequently all closed ideals in $A \tensMin B$ satisfy~(1) and~(2).

If the pair ($A$,$B$) does not satisfy Tomiyama's property~(F), then there necessarily exists a closed ideal in $A \tensMin B$ that does not satisfy~(3) (hence neither~(1), nor~(2)).
In this case, it remains unclear if there also exist closed ideals in $A \tensMin B$ that satisfy~(3) but not~(2), or that satisfy~(2) but not~(1).
\end{rmk}

%==========================================================================================
Note that statement~(2) in \autoref{prp:AlgTP} below shows that, unlike the ISP, the IIP always holds for the inclusion of the algebraic into the minimal tensor product of \ca{s}.

%==========================================================================================
\begin{pgr}
\label{pgr:SliceMaps}
Let $A$ and $B$ be \ca{s}.
Given $\psi \in B^*$, that is, a bounded, linear functional $\psi \colon B \to \CC$, we consider the left slice map $L_\psi^{(0)} \colon A \odot B \to A$ given by
\[
L_\psi^{(0)} \left( \sum_k a_k \otimes b_k \right) 
= \sum_k \psi(b_k)a_k.
\]

If $\psi$ is a positive, linear functional, then it is well-known that $L_\psi^{(0)}$ extends to the completion $A \tensMin B$ and induces a positive, linear map $L_\psi \colon A \tensMin B \to A$;
see \cite[Paragraph~II.9.7.1]{Bla06OpAlgs}.
Using that every bounded, linear functional on a \ca{} is a linear combination of positive, linear functionals (\cite[Theorem~II.6.3.4]{Bla06OpAlgs}), we see that $L_\psi^{(0)}$ extends to a bounded, linear map $L_\psi \colon A \tensMin B \to A$ for every $\psi \in B^*$.
\end{pgr}

%==========================================================================================
\begin{thm}
\label{prp:AlgTP}
Let $A$ and $B$ be \ca{s} containing subalgebras $A_0 \subseteq A$ and $B_0 \subseteq B$.
The following statements hold:
\begin{enumerate}
\item
The inclusion $A_0 \odot B_0 \subseteq A \otimes B$ has the ISP if and only if $A_0 \subseteq A$ and $B_0 \subseteq B$ have the ISP and the pair ($A$,$B$) has Tomiyama's property~(F).
\item
The inclusion $A_0 \odot B_0 \subseteq A \otimes B$ has the IIP if and only if $A_0 \subseteq A$ and $B_0 \subseteq B$ have the IIP.
\end{enumerate}
\end{thm}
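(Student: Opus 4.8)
The plan is to reduce everything to the behaviour of the slice maps of \autoref{pgr:SliceMaps} on product ideals, together with the descriptions of ideals in minimal tensor products furnished by \autoref{prp:CharPropF} and \autoref{prp:PrimeIdlTensMin}. The central bookkeeping tool is the identity
\[
(L\otimes M)\cap(A_0\odot B_0)=(L\cap A_0)\odot(M\cap B_0)
\]
for arbitrary closed ideals $L\subseteq A$ and $M\subseteq B$, with the special case $(L\otimes B)\cap(A_0\odot B_0)=(L\cap A_0)\odot B_0$. The inclusion ``$\supseteq$'' is clear. For ``$\subseteq$'', write $z=\sum_{k=1}^n a_k\otimes b_k\in(L\otimes M)\cap(A_0\odot B_0)$ with $a_k\in A_0$ and $b_1,\dots,b_n\in B_0$ linearly independent; extending coordinate functionals by Hahn--Banach gives $\psi_k\in B^*$ with $\psi_k(b_j)=\delta_{jk}$, so that $L_{\psi_k}(z)=a_k$, and since $L_{\psi_k}(a\otimes b)=\psi_k(b)a\in L$ for $a\in L$ while $L\otimes M=\overline{L\odot M}$, continuity yields $a_k\in L\cap A_0$. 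Rewriting $z$ with the $a_k$ a basis of their span and using right slice maps symmetrically shows $b_k\in M\cap B_0$, whence $z\in(L\cap A_0)\odot(M\cap B_0)$.

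For the ``only if'' direction of~(1), if $A_0\odot B_0\subseteq A\otimes B$ has the ISP then so does $A\odot B\subseteq A\otimes B$ by \autoref{prp:Superalgebras}, hence $(A,B)$ has Tomiyama's property~(F) by \autoref{prp:CharPropF}. Given closed ideals $I_1,I_2\subseteq A$ with $I_1\cap A_0\subseteq I_2\cap A_0$, the identity above gives $(I_1\otimes B)\cap(A_0\odot B_0)\subseteq(I_2\otimes B)\cap(A_0\odot B_0)$, so $I_1\otimes B\subseteq I_2\otimes B$; applying a left slice map $L_\psi$ at a state $\psi$ of $B$ with $\psi(b)\neq0$ for a suitable $b$ recovers $I_j$ from $I_j\otimes B$, so $I_1\subseteq I_2$, and thus $A_0\subseteq A$ has the ISP; symmetrically $B_0\subseteq B$ does. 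The ``only if'' direction of~(2) is the same argument with ``$=\{0\}$'' in place of the inclusions, and needs property~(F) nowhere.

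For the ``if'' direction of~(1), assume $A_0\subseteq A$ and $B_0\subseteq B$ have the ISP and $(A,B)$ has property~(F). By \autoref{prp:CharISPGeneral}(3) it suffices to show that an arbitrary closed ideal $I\subseteq A\otimes B$ equals the closed ideal $\mathcal I$ of $A\otimes B$ generated by $I\cap(A_0\odot B_0)$. By \autoref{prp:CharPropF}, $I$ is the closure of the sum of the product ideals $L\otimes M$ it contains, so it is enough to see $L\otimes M\subseteq\mathcal I$ for each such. The identity gives $(L\cap A_0)\odot(M\cap B_0)=(L\otimes M)\cap(A_0\odot B_0)\subseteq\mathcal I$, while the ISP of $A_0\subseteq A$ and of $B_0\subseteq B$ makes $L$ the closed ideal of $A$ generated by $L\cap A_0$ and $M$ the closed ideal of $B$ generated by $M\cap B_0$ (\autoref{prp:CharISPGeneral}(3)); a routine approximation then identifies $L\otimes M$ with the closed ideal of $A\otimes B$ generated by $(L\cap A_0)\odot(M\cap B_0)$, giving $L\otimes M\subseteq\mathcal I$.

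For the ``if'' direction of~(2), assume $A_0\subseteq A$ and $B_0\subseteq B$ have the IIP and let $I\subseteq A\otimes B$ be a nonzero closed ideal. Since $I\neq\{0\}$, the quotient map $A\otimes B\to(A\otimes B)/I$ is not injective; it cannot be injective on $A\odot B$ either, for otherwise the spatial norm, being the smallest $C^*$-norm on $A\odot B$, would agree on $A\odot B$ with the (a priori smaller) $C^*$-norm induced by the quotient, forcing that map to be isometric and hence injective on all of $A\otimes B$. Thus $I\cap(A\odot B)\neq\{0\}$; in fact $I$ contains a nonzero product ideal $L\otimes M$ with $L\subseteq A$ and $M\subseteq B$ nonzero closed ideals, by the analysis of ideals in minimal tensor products (cf.\ \autoref{prp:PrimeIdlTensMin} for the prime case, and \cite{Laz10IdlsMinTensProdCAlgs}). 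Applying the IIP of $A_0\subseteq A$ and of $B_0\subseteq B$ to $L$ and $M$ gives $L\cap A_0\neq\{0\}\neq M\cap B_0$, hence $\{0\}\neq(L\cap A_0)\odot(M\cap B_0)\subseteq(L\otimes M)\cap(A_0\odot B_0)\subseteq I\cap(A_0\odot B_0)$, as required. The main obstacle is exactly this structural input: promoting ``$I\cap(A\odot B)\neq\{0\}$'' to ``$I$ contains a nonzero product ideal'' for an arbitrary, not necessarily prime, nonzero closed ideal of a minimal tensor product --- either by a careful reduction to \autoref{prp:PrimeIdlTensMin} or by a self-contained minimal-tensor-rank argument starting from a nonzero element of $I\cap(A\odot B)$; everything else is routine slice-map bookkeeping.
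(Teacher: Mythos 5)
Your proof is correct and follows essentially the paper's strategy: slice maps to pass from ideals of $A \otimes B$ to ideals of the factors, \autoref{prp:CharPropF} to decompose an arbitrary closed ideal into product ideals for the backward direction of~(1), and the existence of a nonzero elementary tensor in a nonzero closed ideal for the backward direction of~(2). Two divergences are worth noting. First, in the forward direction of~(1) the paper does not use your identity $(L\otimes M)\cap(A_0\odot B_0)=(L\cap A_0)\odot(M\cap B_0)$; instead it picks $t\in(I\otimes B)\cap(A_0\odot B_0)$ with $t\notin J\otimes B$ and invokes the positive solution of the slice map problem for closed ideals under property~(F) (\cite[Remark~24]{Was76SliceMapProblem}) to produce $\psi\in B^*$ with $L_\psi(t)\in (I\cap A_0)\setminus J$. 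Your identity (whose dual-basis proof with left and right slice maps is fine, using that $L\cap A_0$ is a subspace so the basis change is legitimate) yields a slightly cleaner route that, as you observe, does not need property~(F) for this particular piece. Second, the ``main obstacle'' you flag in~(2) --- promoting $I\neq\{0\}$ to ``$I$ contains a nonzero product ideal'' --- is not proved in the paper either: it is exactly the statement that a nonzero closed ideal of $A\otimes B$ contains a nonzero elementary tensor, which the paper simply cites as \cite[Lemma~2.12(ii)]{BlaKir04PureInf}; your intermediate argument via minimality of the spatial norm that $I\cap(A\odot B)\neq\{0\}$ is correct but becomes superfluous once that lemma is invoked. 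So nothing essential is missing from your argument.
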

\begin{proof}
(1)
To show the forward implication, assume that $A_0 \odot B_0 \subseteq A \otimes B$ has the ISP.
Then $A \odot B \subseteq A \otimes B$ has the ISP (see \autoref{prp:Superalgebras}), and thus ($A$,$B$) has Tomiyama's property~(F) by \autoref{prp:CharPropF}.
To verify that $A_0 \subseteq A$ has the ISP, let $I,J \subseteq A$ be distinct closed ideals.
Then $I \otimes B$ and $J \otimes B$ are distinct closed ideals in $A \otimes B$.
By assumption, their intersections with $A_0 \odot B_0$ are also distinct.
Without loss of generality, we assume that there is $t \in A \tensMin B$ with
\[
t \in (I \tensMin B) \cap (A_0 \odot B_0), \andSep
t \notin J \tensMin B.
\]

Since ($A$,$B$) has Tomiyama's property~(F), the slice map problem has a positive solution with respect to closed ideals;
see \cite[Remark~24]{Was76SliceMapProblem}.
Thus, if an element $x \in A \otimes B$ satisfies $L_\psi(x) \in J$ for every $\psi \in B^*$, then $x \in J \otimes B$.
Since our element~$t$ does not belong to $J \otimes B$, we can pick $\psi \in B^*$ such that
\[
L_\psi(t) \notin J.
\]
We have $L_\psi(I \otimes B) \subseteq I$, and therefore $L_\psi(t) \in I$.
Finally, since $t \in A_0 \odot B_0$, say $t = \sum_k a_k \otimes b_k$ for a finite sum of elementary tensors with $a_k \in A_0$ and $b_k \in B_0$, we have 
\[
L_\psi(t)
= \sum_k \psi(b_k)a_k \in A_0.
\]
Thus, the element $L_\psi(t)$ belongs to $I \cap A_0$ but not to $J$, which shows that $A_0$ separates ideals in $A$.
Analogously one shows that $B_0 \subseteq B$ has the ISP.

\medskip

To show the backwards implication, assume that $A_0 \subseteq A$ and $B_0 \subseteq B$ have the ISP and that the pair ($A$,$B$) has Tomiyama's property~(F).
Given a subset $S$ in a \ca{}, let us use $\langle S \rangle$ to denote the closed ideal generated by $S$.
To show the ISP for $A_0 \odot B_0 \subseteq A \otimes B$, we verify condition~(3) in \autoref{prp:CharISPGeneral}, that is, we prove that $I = \langle I \cap (A_0 \odot B_0) \rangle$ for every closed ideal $I \subseteq A \otimes B$.
We first consider the case that $I = J \otimes K$ for closed ideals $J \subseteq A$ and $K \subseteq B$.
Note that
\[
(J \otimes K) \cap (A_0 \odot B_0) \supseteq (J \cap A_0) \odot (K \cap B_0).
\]
Since $A_0 \subseteq A$ has the ISP, we have $J = \langle J \cap A_0\rangle$ by \autoref{prp:CharISPGeneral}.
Similarly, we get $K = \langle K \cap B_0 \rangle$, and thus
\[
\big\langle (J \otimes K)\cap(A_0 \odot B_0) \big\rangle
\supseteq \big\langle (J \cap A_0) \odot (K \cap B_0) \big\rangle 
\supseteq \big\langle J \odot K \big\rangle 
= J \otimes K.
\]
		
Now, let $I \subseteq A \otimes B$ be an arbitrary closed ideal.
Since ($A$,$B$) has Tomiyama's property~(F), we obtain a family of product ideals $I_\lambda$ such that $I$ is the closure of $\sum_\lambda I_\lambda$.
For each $\lambda$, we have
\[
\big\langle I \cap (A_0 \odot B_0) \big\rangle
\supseteq \big\langle I_\lambda \cap (A_0 \odot B_0) \big\rangle
= I_\lambda,
\]
and thus
\[
\big\langle I \cap (A_0 \odot B_0) \big\rangle
\supseteq \big\langle \sum_\lambda I_\lambda \big\rangle
= I,
\]
as desired.

\medskip

(2)
To show the forward implication, assume that $A_0 \odot B_0 \subseteq A \otimes B$ has the IIP.
To verify that $A_0 \subseteq A$ has the IIP, let $I \subseteq A$ be a non-zero, closed ideal.
Then $I \otimes B$ is a non-zero, closed ideal in $A \otimes B$.
By assumption, we obtain a non-zero element $t \in (I \tensMin B) \cap (A_0 \odot B_0)$.
Since slice maps detect if an element in $A \otimes B$ is zero (see~S3 on p.541 in \cite{Was76SliceMapProblem}), we obtain $\psi \in B^*$ such that
\[
L_\psi(t) \neq 0.
\]
As in~(1), we get $L_\psi(t) \in I \cap A_0$.
Analogously one shows that $B_0 \subseteq B$ has the IIP.

\medskip

To show the backwards implication, assume that $A_0 \subseteq A$ and $B_0 \subseteq B$ have the IIP.
Let $I \subseteq A \tensMin B$ be a non-zero, closed ideal.
Then $I$ contains a nonzero elementary tensor $a \otimes b$;
see, for example, \cite[Lemma~2.12(ii)]{BlaKir04PureInf}.
Let $J$ be the closed ideal of $A$ generated by $a$, and analogously for $K$.
Then $J \otimes K \subseteq I$.
Since $A_0 \subseteq A$ has the IIP, we obtain a nonzero element $x \in J \cap A_0$.
Similarly, we obtain a nonzero element $y \in K \cap B_0$.
Then
\[
0 \neq x \otimes y 
\in (J \cap A_0) \odot (K \cap B_0)
\subseteq (J \odot K) \cap (A_0 \odot B_0)
\subseteq I \cap (A_0 \odot B_0)
\]
as desired.
\end{proof}

%==========================================================================================
The following corollary is immediate by \autoref{prp:AlgTP}.

%==========================================================================================
\begin{cor}
Let $A$ be a \ca{}, and suppose $A_0 \subseteq A$ is a subalgebra with the ISP (the IIP). 
Then $M_n(A_0) \subseteq M_n(A)$ has the ISP (the IIP).
\end{cor}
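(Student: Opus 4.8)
The plan is to recognize the inclusion $M_n(A_0) \subseteq M_n(A)$ as a special case of the inclusion of an algebraic tensor product into a minimal tensor product treated in \autoref{prp:AlgTP}. First I would record the standard identifications: since $M_n(\CC)$ is finite-dimensional, the algebraic tensor product $M_n(\CC) \odot A_0$ is exactly the $\ast$-subalgebra of $n \times n$ matrices with entries in $A_0$, i.e.\ $M_n(A_0)$, while $M_n(\CC) \odot A$ is already complete in the minimal $C^*$-norm and equals $M_n(A) = M_n(\CC) \tensMin A$. Under these identifications the inclusion $M_n(A_0) \subseteq M_n(A)$ becomes $M_n(\CC) \odot A_0 \subseteq M_n(\CC) \tensMin A$, which is the inclusion considered in \autoref{prp:AlgTP} with the roles of $A$ and $A_0$ there both played by $M_n(\CC)$, and the roles of $B$ and $B_0$ there played by $A$ and $A_0$ respectively.

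It then remains to verify the hypotheses. The inclusion $M_n(\CC) \subseteq M_n(\CC)$ trivially has both the ISP and the IIP, since for any \ca{} $C$ one has $I \cap C = I$ for every closed ideal $I \subseteq C$; thus distinct ideals have distinct intersections with $C$, and nonzero ideals have nonzero intersection with $C$. The inclusion $A_0 \subseteq A$ has the ISP (resp.\ the IIP) by assumption. Finally, for the ISP case \autoref{prp:AlgTP} also requires the pair $(M_n(\CC), A)$ to have Tomiyama's property~(F); this holds because $M_n(\CC)$ is nuclear, so that every closed ideal of $M_n(\CC) \tensMin A = M_n(A)$ is of the form $M_n(J) = M_n(\CC) \tensMin J$ for a closed ideal $J \subseteq A$, hence is itself a product ideal. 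In particular every closed ideal of $M_n(A)$ is the closure of the sum of the product ideals it contains, which is condition~(2) of \autoref{prp:CharPropF}.

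Combining these observations, \autoref{prp:AlgTP}(1) yields the ISP statement and \autoref{prp:AlgTP}(2) yields the IIP statement, completing the proof. There is no genuine obstacle here: the argument is bookkeeping together with the remark that any pair of \ca{s} one of which is $M_n(\CC)$ automatically satisfies property~(F), and the latter follows immediately from the description of the closed ideals of $M_n(A)$ in terms of those of $A$.
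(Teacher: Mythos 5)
Your proposal is correct and is essentially the paper's own argument: the paper derives this corollary as an immediate consequence of \autoref{prp:AlgTP}, exactly via the identification $M_n(A_0) = M_n(\CC) \odot A_0 \subseteq M_n(\CC) \tensMin A = M_n(A)$ that you spell out. Your justification of Tomiyama's property~(F) for the pair $(M_n(\CC), A)$ via nuclearity of $M_n(\CC)$ (equivalently, the description of closed ideals of $M_n(A)$ as $M_n(J)$) is the standard and correct way to fill in that hypothesis.
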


%==========================================================================================
The next result recovers \cite[Corollary 2.3]{AleKye19UniqueCNormGpRg} and also proves the converse direction. 
\begin{cor}
Let $\Gamma$ and $\Delta$ be discrete groups.
Then $\Gamma \times \Delta$ is algebraically $C^*$-unique if and only if $\Gamma$ and $\Delta$ are.
\end{cor}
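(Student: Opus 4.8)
The plan is to deduce this from \autoref{prp:AlgTP}(2) once everything has been translated into statements about the IIP for minimal tensor products. The first step I would record is a reformulation of algebraic $C^*$-uniqueness in the language of this paper. For a discrete group $G$, the group algebra $\CC[G]$ is a dense $*$-subalgebra of its enveloping \ca{} $C^*(G)$, and in Barnes' terminology $G$ is algebraically $C^*$-unique precisely when the inclusion $\CC[G] \subseteq C^*(G)$ has the IIP. As noted in the introduction, algebraic $C^*$-uniqueness implies amenability, and for amenable $G$ one has $C^*(G) = C^*_\red(G)$. Hence $G$ is algebraically $C^*$-unique if and only if $G$ is amenable and the inclusion $\CC[G] \subseteq C^*_\red(G)$ has the IIP.

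Next I would invoke the natural identifications $\CC[\Gamma \times \Delta] \cong \CC[\Gamma] \odot \CC[\Delta]$ as $*$-algebras and $C^*_\red(\Gamma \times \Delta) \cong C^*_\red(\Gamma) \tensMin C^*_\red(\Delta)$ as \ca{s}, which are compatible with the respective inclusions of group algebras. Applying \autoref{prp:AlgTP}(2) with $A_0 = \CC[\Gamma]$, $A = C^*_\red(\Gamma)$, $B_0 = \CC[\Delta]$ and $B = C^*_\red(\Delta)$ then shows that $\CC[\Gamma \times \Delta] \subseteq C^*_\red(\Gamma \times \Delta)$ has the IIP if and only if both $\CC[\Gamma] \subseteq C^*_\red(\Gamma)$ and $\CC[\Delta] \subseteq C^*_\red(\Delta)$ have the IIP. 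Since $\Gamma \times \Delta$ is amenable if and only if $\Gamma$ and $\Delta$ are both amenable, combining this with the reformulation from the first step yields the claim.

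I do not expect a genuine obstacle here, since this is essentially a corollary; the only points requiring a little care are the justification for replacing $C^*(-)$ by $C^*_\red(-)$ (which rests on the implication from algebraic $C^*$-uniqueness to amenability, and hence to weak containment) and the identification $C^*_\red(\Gamma \times \Delta) \cong C^*_\red(\Gamma) \tensMin C^*_\red(\Delta)$ with the \emph{minimal} tensor product. This last point is precisely why one must pass to the reduced, rather than the full, group \ca{s}: \autoref{prp:AlgTP} concerns the minimal tensor product, whereas $C^*(\Gamma \times \Delta)$ is the maximal tensor product $C^*(\Gamma) \tensMax C^*(\Delta)$, to which \autoref{prp:AlgTP} does not apply.
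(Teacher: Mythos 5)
Your proposal is correct and follows essentially the same route as the paper: reformulate algebraic $C^*$-uniqueness as amenability plus the IIP for $\CC[\Gamma] \subseteq C^*_\red(\Gamma)$, identify $\CC[\Gamma\times\Delta] \subseteq C^*_\red(\Gamma\times\Delta)$ with $\CC[\Gamma]\odot\CC[\Delta] \subseteq C^*_\red(\Gamma)\tensMin C^*_\red(\Delta)$, and apply \autoref{prp:AlgTP}(2) together with the fact that a product of discrete groups is amenable if and only if both factors are. Your closing remark about why one must work with the reduced (minimal tensor product) rather than the full (maximal tensor product) group \ca{s} is a sensible observation that the paper leaves implicit.
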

\begin{proof}
Similarly as in \autoref{prp:CharStarReg}, we see that a discrete group $H$ is algebraically $C^*$-unique if and only if $H$ is amenable and the inclusion $\CC[H] \subseteq C^*_\red(H)$ has the IIP.
The natural isomorphism between $C^*_\red( \Gamma\times\Delta )$ and $C^*_\red(\Gamma) \tensMin C^*_\red(\Delta)$ identifies $\CC[ \Gamma\times\Delta ]$ with $\CC[\Gamma] \odot \CC[\Delta]$, as shown in the following commutative diagram:
\[
\xymatrix{
\CC[ \Gamma\times\Delta ]\ar@{^{(}->}[r] \ar[d]_{\cong}
& C^*_\red( \Gamma\times\Delta ) \ar[d]^{\cong} \\
\CC[\Gamma] \odot \CC[\Delta] \ar@{^{(}->}[r]
& C^*_\red(\Gamma) \tensMin C^*_\red(\Delta).
}
\]
Now the result follows from \autoref{prp:AlgTP} together with the result that $\Gamma\times\Delta$ is amenable if and only if $\Gamma$ and $\Delta$ are amenable (\cite[Proposition~1.13]{Pat88Amen}).
\end{proof}

%==========================================================================================
\begin{exa}
Let $\Gamma = \ZZ[\tfrac{1}{pq}]\rtimes\ZZ^2$ be one of the torsion-free, algebraically $C^*$-unique groups constructed by Scarparo in \cite{Sca20TorsFreeAlgCUniqueGp}, and let $\Delta$ be any locally finite group.
Then $\Lambda \times \Delta$ is algebraically $C^*$-unique.
\end{exa}

%==========================================================================================
\begin{thm}
\label{prp:ProjTP}
Let $\iota_A \colon A_0 \to A$ and $\iota_B \colon B_0 \to B$ be bounded homomorphisms from Banach algebras $A_0$ and $B_0$ to \ca{s} $A$ and $B$, and let $\iota \colon A_0 \tensProj B_0 \to A \tensMin B$ be the naturally induced map.
The following statements hold:
\begin{enumerate}
\item
The inclusion $\iota(A_0 \tensProj B_0) \subseteq A \tensMin B$ has the ISP if and only if $\iota_A(A_0) \subseteq A$ and $\iota_B(B_0) \subseteq B$ have the ISP and the pair ($A$,$B$) has Tomiyama's property~(F).
\item
The inclusion $\iota(A_0 \tensProj B_0) \subseteq A \tensMin B$ has the IIP if and only if $\iota_A(A_0) \subseteq A$ and $\iota_B(B_0) \subseteq B$ have the IIP.
\end{enumerate}
\end{thm}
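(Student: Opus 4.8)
The plan is to deduce everything from \autoref{prp:AlgTP} applied to the subalgebras $\iota_A(A_0) \subseteq A$ and $\iota_B(B_0) \subseteq B$, together with \autoref{prp:Superalgebras}. First I would record the structure of $\iota$: since $\iota_A$ and $\iota_B$ are bounded homomorphisms, the map $\iota_A \odot \iota_B \colon A_0 \odot B_0 \to A \odot B$ is bounded for the projective norms and hence extends to a bounded homomorphism $A_0 \tensProj B_0 \to A \tensProj B$; postcomposing with the injective, contractive map $A \tensProj B \to A \tensMin B$ yields $\iota$. In particular
\[
\iota_A(A_0) \odot \iota_B(B_0) \subseteq \iota(A_0 \tensProj B_0) \subseteq A \tensProj B \subseteq A \tensMin B .
\]
The backward implications in~(1) and~(2) are then immediate: under the stated hypotheses, \autoref{prp:AlgTP} shows that $\iota_A(A_0) \odot \iota_B(B_0) \subseteq A \tensMin B$ has the ISP (the IIP), and then so does the larger subalgebra $\iota(A_0 \tensProj B_0) \subseteq A \tensMin B$ by \autoref{prp:Superalgebras}.

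For the forward implications I would transcribe the slice-map argument from the proof of \autoref{prp:AlgTP}. The one extra ingredient is the compatibility of slice maps with $\iota$: for $\psi \in B^*$ the left slice map $L_\psi \colon A \tensMin B \to A$ of \autoref{pgr:SliceMaps} satisfies $L_\psi \circ \iota = \iota_A \circ L_{\psi \circ \iota_B}$, where $L_{\psi \circ \iota_B} \colon A_0 \tensProj B_0 \to A_0$ is the left slice map for the functional $\psi \circ \iota_B \in B_0^*$. Indeed, $L_{\psi \circ \iota_B}$ is well defined and bounded because every element of $A_0 \tensProj B_0$ is an absolutely convergent sum $\sum_k a_k \otimes b_k$ of elementary tensors, so that $\sum_k (\psi \circ \iota_B)(b_k) a_k$ converges absolutely in $A_0$; and then both $L_\psi \circ \iota$ and $\iota_A \circ L_{\psi \circ \iota_B}$ are bounded linear maps $A_0 \tensProj B_0 \to A$ that agree on the dense subalgebra $A_0 \odot B_0$, using $\|\cdot\|_{\mathrm{min}} \leq \|\cdot\|_\pi$ to see that the defining series of any $s \in A_0 \tensProj B_0$ also converges in $A \tensMin B$. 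In particular $L_\psi(\iota(s)) \in \iota_A(A_0)$ for every $s \in A_0 \tensProj B_0$, and symmetrically right slice maps send $\iota(A_0 \tensProj B_0)$ into $\iota_B(B_0)$.

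Granting this, suppose $\iota(A_0 \tensProj B_0) \subseteq A \tensMin B$ has the ISP. Then $A \tensProj B \subseteq A \tensMin B$ has the ISP by \autoref{prp:Superalgebras}, so $(A,B)$ has Tomiyama's property~(F) by \autoref{prp:CharPropF}. To verify the ISP for $\iota_A(A_0) \subseteq A$, take closed ideals $I \nsubseteq J$ in $A$; choosing $a \in I \setminus J$ and $b \in B$ nonzero gives $a \otimes b \in (I \tensMin B) \setminus (J \tensMin B)$, so $I \tensMin B \nsubseteq J \tensMin B$, and the ISP hypothesis furnishes $t \in (I \tensMin B) \cap \iota(A_0 \tensProj B_0)$ with $t \notin J \tensMin B$. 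Since $(A,B)$ has property~(F), the slice map problem is solvable for closed ideals (\cite[Remark~24]{Was76SliceMapProblem}), so some $\psi \in B^*$ has $L_\psi(t) \notin J$; on the other hand $L_\psi(t) \in L_\psi(I \tensMin B) \subseteq I$ and $L_\psi(t) \in \iota_A(A_0)$ by the compatibility above, whence $L_\psi(t) \in (I \cap \iota_A(A_0)) \setminus J$. Together with the symmetric statement for $\iota_B(B_0) \subseteq B$, this proves the forward direction of~(1). The forward direction of~(2) follows the same pattern, starting instead from a nonzero closed ideal $I \subseteq A$, using the IIP hypothesis to produce a nonzero $t \in (I \tensMin B) \cap \iota(A_0 \tensProj B_0)$, and using that slice maps detect nonzero elements (S3 on p.~541 of \cite{Was76SliceMapProblem}) to find $\psi$ with $0 \neq L_\psi(t) \in I \cap \iota_A(A_0)$.

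I expect the main obstacle to be the slice-map compatibility $L_\psi \circ \iota = \iota_A \circ L_{\psi \circ \iota_B}$ --- specifically, checking that $L_{\psi \circ \iota_B}$ is bounded on $A_0 \tensProj B_0$ with range in $A_0$, via the absolutely convergent series representation of elements of the projective tensor product --- since this is precisely the step that lets one descend from the projective tensor product to the individual factors, something \autoref{prp:Superalgebras} alone cannot provide. Everything else should be a routine adaptation of the proof of \autoref{prp:AlgTP}.
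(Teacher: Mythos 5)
Your proposal is correct and follows essentially the same route as the paper: the backward implications via \autoref{prp:AlgTP} and \autoref{prp:Superalgebras} applied to $\iota_A(A_0) \odot \iota_B(B_0) \subseteq \iota(A_0 \tensProj B_0)$, and the forward implications via the slice-map compatibility $L_\psi \circ \iota = \iota_A \circ L_{\psi\circ\iota_B}$, where your $L_{\psi\circ\iota_B}$ is exactly the map the paper calls $\widehat{L}_\psi$ (obtained there from the universal property of the projective tensor product rather than from absolutely convergent series, but the two justifications are interchangeable). No gaps.
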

\begin{proof}
Given $\psi \in B^*$, we consider the left slice map $L_\psi \colon A \otimes B \to A$ as in \autoref{pgr:SliceMaps}.
It follows easily from the universal property of the projective tensor product that there exists a bounded, linear map $\widehat{L}_{\psi} \colon A_0 \tensProj B_0 \to A_0$ determined on finite sums of elementary tensors by
\[
\widehat{L}_{\psi} \left( \sum_k a_k \otimes b_k \right) 
= \sum_k \psi(\iota_B(b_k))a_k.
\]

Using that $L_\psi\circ\iota$ and $\iota_A\circ \widehat{L}_{\psi}$ are bounded and linear and agree on elementary tensors, we see that both maps agree.
Thus, the following diagram commutes:
\[
\xymatrix{
A_0 \tensProj B_0 \ar[r]^{\iota} \ar[d]_{\widehat{L}_{\psi}}
& A \tensMin B \ar[d]^{L_\psi} \\
A_0 \ar[r]_{\iota_A}
& A.
}
\]
We will use that if $I \subseteq A$ is a closed ideal, then $L_{\psi}(I \tensMin B) \subseteq I$.

\medskip

(1)
To show the forward implication, assume that $\iota(A_0 \tensProj B_0) \subseteq A \tensMin B$ has the ISP.
Since $\iota(A_0 \tensProj B_0)$ is contained in $A \tensProj B$, it follows that $A \tensProj B \subseteq A \tensMin B$ has the ISP (see \autoref{prp:Superalgebras}), and thus ($A$,$B$) has Tomiyama's property~(F) by \autoref{prp:CharPropF}.
To verify that $\iota_A(A_0) \subseteq A$ has the ISP, let $I,J \subseteq A$ be distinct closed ideals.
Then $I \otimes B$ and $J \otimes B$ are distinct closed ideals in $A \otimes B$.
By assumption, their intersections with $\iota(A_0 \tensProj B_0)$ are also distinct.
Without loss of generality, we assume that there is $t \in A_0 \tensProj B_0$ with
\[
\iota(t) \in I \tensMin B, \andSep
\iota(t) \notin J \tensMin B.
\]

As in the proof of \autoref{prp:AlgTP}, we pick $\psi \in B^*$ such that
\[
L_\psi(\iota(t)) \notin J.
\]

Set $x := \widehat{L}_{\psi}(t) \in A_0$.
Then
\[
\iota_A(x) 
= \iota_A(\widehat{L}_{\psi}(t))
= L_{\psi}(\iota(t))
\in L_{\psi}(I \otimes B)
\subseteq I, \andSep
\iota_A(x) 
= L_{\psi}(\iota(t))
\notin J .
\]
%and
%\[
%\| a - \iota_A(x) \|
%= \| L_\psi(a \otimes b) - L_{\psi}(\iota(t)) \|
%\leq \| (a \otimes b) - \iota(t) \| < \varepsilon.
%\]
Thus, $\iota_A(x)$ separates $I$ and $J$.
Analogously, one shows that $\iota_B(B_0) \subseteq B$ has the ISP.

\medskip

To show the backwards implication, assume that $\iota_A(A_0) \subseteq A$ and $\iota_B(B_0) \subseteq B$ have the ISP and that the pair ($A$,$B$) has Tomiyama's property~(F).
By \autoref{prp:AlgTP}, it follows that
\[
\iota_A(A_0) \odot \iota_B(B_0) \subseteq A \tensMin B
\]
has the ISP.
Since $\iota_A(A_0) \odot \iota_B(B_0)$ is contained in $\iota(A_0 \tensProj B_0)$, it follows from \autoref{prp:Superalgebras} that $\iota(A_0 \tensProj B_0) \subseteq A \tensMin B$ has the ISP.

\medskip

(2)
To show the forward implication, assume that $\iota(A_0 \tensProj B_0) \subseteq A \tensMin B$ has the IIP.
To show that $\iota_A(A_0) \subseteq A$ has the IIP, let $I \subseteq A$ be a non-zero, closed ideal.
Then $I \tensMin B \subseteq A \tensMin B$ is a non-zero, closed ideal, and by assumption we obtain $t \in A_0 \tensProj B_0$ such that $0 \neq \iota(t) \in I \tensMin B$.
As in the proof of \autoref{prp:AlgTP}, we can pick $\psi \in B^*$ such that
\[
L_\psi(\iota(t)) \neq 0.
\]
Set $x := \widehat{L}_{\psi}(t) \in A_0$.
As in~(1), we get $\iota_A(x) \in I$.
Further, we have 
\[
\iota_A(x) 
= \iota_A(\widehat{L}_{\psi}(t))
= L_{\psi}(\iota(t))
\neq 0,
\]
and thus $I \cap \iota_A(A_0) \neq \{0\}$.
Analogously, one shows that $\iota_B(B_0) \subseteq B$ has the IIP.

\medskip

The backwards implication follows from \autoref{prp:AlgTP}, similarly as in~(1).
\end{proof}

%==========================================================================================
We are now ready to prove the main result of this section, characterizing when the Cartesian product of two locally compact groups has the ISP or the IIP.
We discuss in \autoref{rmk:ProductGp} how our result generalizes the characterization of $*$-regularity and $C^*$-uniqueness of product groups from \cite[Corollary~3.5]{HauKanVoi90StarRegTensProdCAlg}.

%==========================================================================================
\begin{thm}
\label{prp:ProductGp}
Let $G$ and $H$ be locally compact groups.
The following statements hold:
\begin{enumerate}
\item
The product $G \times H$ has the ISP if and only if $G$ and $H$ have the ISP and the pair of \ca{s} $C^*_\red(G)$ and~$C^*_\red(H)$ has Tomiyama's property~(F).
\item
The product $G \times H$ has the IIP if and only if $G$ and $H$ have the IIP.
\end{enumerate}
\end{thm}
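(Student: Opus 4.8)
The plan is to reduce both statements to \autoref{prp:ProjTP} by identifying the inclusion $L^1(G \times H) \subseteq C^*_\red(G \times H)$ with an inclusion of the form $\iota(A_0 \tensProj B_0) \subseteq A \tensMin B$ treated there. To set this up I would first recall two standard identifications. Since the left regular representation of $G \times H$ decomposes as $\lambda_G \otimes \lambda_H$ on $L^2(G) \otimes L^2(H) \cong L^2(G \times H)$, there is a canonical isomorphism $C^*_\red(G \times H) \cong C^*_\red(G) \tensMin C^*_\red(H)$. On the other hand, the assignment $f \otimes g \mapsto \big( (x,y) \mapsto f(x)g(y) \big)$ extends to an isometric isomorphism of Banach spaces $L^1(G) \tensProj L^1(H) \cong L^1(G \times H)$ (see, e.g., \cite{Rya02IntroTensProdBSp}), and a short computation on elementary tensors shows that it intertwines the convolution products and the involutions, so that it is in fact an isomorphism of Banach $*$-algebras.

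Next I would let $\iota_G \colon L^1(G) \to C^*_\red(G)$ and $\iota_H \colon L^1(H) \to C^*_\red(H)$ be the canonical contractive $*$-homomorphisms, which are injective since the left regular representations are faithful on $L^1(G)$ and $L^1(H)$, and let $\iota \colon L^1(G) \tensProj L^1(H) \to C^*_\red(G) \tensMin C^*_\red(H)$ be the map induced by $\iota_G$ and $\iota_H$ as in \autoref{prp:ProjTP}. The point to verify is that, under the two identifications above, $\iota$ is precisely the canonical inclusion $L^1(G \times H) \hookrightarrow C^*_\red(G \times H)$; this is immediate on elementary tensors and follows in general by density and continuity. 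In particular $\iota\big( L^1(G) \tensProj L^1(H) \big)$ is exactly the copy of $L^1(G \times H)$ sitting inside $C^*_\red(G \times H)$, and since $\iota_G$ and $\iota_H$ are injective, the inclusion $\iota_G(L^1(G)) \subseteq C^*_\red(G)$ has the ISP (respectively, the IIP) if and only if $G$ does, and likewise for $H$.

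With this in place, both statements follow at once from \autoref{prp:ProjTP} applied with $A_0 = L^1(G)$, $B_0 = L^1(H)$, $A = C^*_\red(G)$, and $B = C^*_\red(H)$: part~(1) gives that $G \times H$ has the ISP if and only if $G$ and $H$ have the ISP and the pair $\big( C^*_\red(G), C^*_\red(H) \big)$ has Tomiyama's property~(F), and part~(2) gives that $G \times H$ has the IIP if and only if $G$ and $H$ have the IIP.

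I expect the main obstacle to lie in the bookkeeping of the first two paragraphs. While the Banach space isomorphism $L^1(G) \tensProj L^1(H) \cong L^1(G \times H)$ is classical, one must check carefully that it respects the multiplicative and $*$-structures, and that it is compatible with the inclusions into the reduced group \ca{s} in the precise sense required, so that the map $\iota$ of \autoref{prp:ProjTP} really is identified with the canonical inclusion of $L^1(G \times H)$. Once these identifications are pinned down, the remainder is a direct appeal to \autoref{prp:ProjTP}.
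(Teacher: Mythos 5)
Your proposal is correct and follows essentially the same route as the paper: identify $L^1(G\times H)$ with $L^1(G)\tensProj L^1(H)$ and $C^*_\red(G\times H)$ with $C^*_\red(G)\tensMin C^*_\red(H)$ via a commutative diagram, and then apply \autoref{prp:ProjTP}. The extra care you take in checking that the Banach space isomorphism respects the $*$-algebra structures and is compatible with the inclusions is exactly the bookkeeping the paper's proof leaves implicit.
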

\begin{proof}
Using the natural isomorphisms of $L^1( G \times H )$ with $L^1(G) \tensProj L^1(H)$ (see, for example, \cite[Corollary~1.10.14]{Pal94BAlg1}), and of $C^*_\red( G \times H )$ with $C^*_\red(G) \tensMin C^*_\red(H)$, we obtain the following commutative diagram:
\[
\xymatrix{
L^1(G \times H) \ar@{^{(}->}[r] \ar[d]_{\cong}
& C^*_\red(G \times H) \ar[d]^{\cong} \\
L^1(G) \tensProj L^1(H) \ar[r]
& C^*_\red(G) \tensMin C^*_\red(H).
}
\]

It follows that the natural map $L^1(G) \tensProj L^1(H) \to C^*_\red(G) \tensMin C^*_\red(H)$ is an inclusion, and we use it to identify $L^1(G) \tensProj L^1(H)$ with a dense subalgebra of $C^*_\red(G) \tensMin C^*_\red(H)$.

Using this, we see that $G \times H$ has the ISP (the IIP) if and only if the inclusion $L^1(G) \tensProj L^1(H) \subseteq C^*_\red(G) \tensMin C^*_\red(H)$ has the ISP (the IIP).
Now the result follows from \autoref{prp:ProjTP}. % \autoref{prp:TensProjPrime}.
\end{proof}

%==========================================================================================
Following Kirchberg and Wassermann \cite{KirWas99ExactGps}, a locally compact group $G$ is said to be \emph{exact} if taking reduced crossed products by $G$ preserves short exact sequences of \ca{s} with continuous $G$-actions.
By considering trivial actions on \ca{s} in a short exact sequence, we see that $C^*_\red(G)$ is an exact \ca{} whenever $G$ is an exact, locally compact group.
Conversely, a discrete group $G$ is exact if (and only if) $C^*_\red(G)$ is exact (\cite[Theorem~5.2]{KirWas99ExactGps}), and it remains a major open problem if this also holds for nondiscrete groups;
see \cite[Problem~9.3]{Ana02AmenExactDynSysCa} and \cite{Man21ExactVsCExact}.
%also \cite{BroCavLi17ExactnessLCG}.
	
%==========================================================================================
\begin{cor}
\label{prp:ISP-PrductExactGp}
Let $G$ and $H$ be locally compact groups and assume that at least one of them is exact.
Then $G \times H$ has the ISP if and only if $G$ and $H$ have the ISP.
\end{cor}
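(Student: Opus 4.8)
The plan is to obtain this as an immediate consequence of \autoref{prp:ProductGp}(1) once we know that the hypothesis on Tomiyama's property~(F) appearing there is automatically satisfied. Recall that \autoref{prp:ProductGp}(1) asserts that $G \times H$ has the ISP if and only if both $G$ and $H$ have the ISP \emph{and} the pair of \ca{s} $C^*_\red(G)$ and $C^*_\red(H)$ has Tomiyama's property~(F). Thus the forward implication of the corollary is free, and for the converse it suffices to show that, under the exactness assumption, the pair $(C^*_\red(G), C^*_\red(H))$ always has property~(F).

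First I would reduce to the level of \ca{s}: assuming without loss of generality that $G$ is exact, the discussion preceding the corollary (which goes back to Kirchberg and Wassermann \cite{KirWas99ExactGps}) shows that $C^*_\red(G)$ is an exact \ca{}. The next step is to invoke the fact that a pair $(A,B)$ of \ca{s} with $A$ exact automatically has Tomiyama's property~(F). I would justify this through the slice map problem: property~(F) for $(A,B)$ is equivalent to the slice map problem for closed ideals having a positive solution for the pair $(A,B)$ (see \cite[Remark~24]{Was76SliceMapProblem}, as already used in the proof of \autoref{prp:AlgTP}), and by Wassermann's characterization of exactness this holds for \emph{every} \ca{} $B$ whenever $A$ is exact. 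Applying this with $A = C^*_\red(G)$ and $B = C^*_\red(H)$ yields that $(C^*_\red(G), C^*_\red(H))$ has property~(F).

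With property~(F) in hand, \autoref{prp:ProductGp}(1) simplifies to the statement that $G \times H$ has the ISP if and only if $G$ and $H$ have the ISP, which is exactly the claim. I do not expect a genuine obstacle here: the only non-formal ingredient is the implication ``$A$ exact $\Rightarrow$ $(A,B)$ has property~(F)'', and the main point of care is simply to cite this precisely; the remainder is a direct appeal to \autoref{prp:ProductGp}.
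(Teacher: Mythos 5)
Your proposal is correct and follows essentially the same route as the paper: both reduce the corollary to \autoref{prp:ProductGp} and then verify that exactness of one factor forces Tomiyama's property~(F) for the pair $(C^*_\red(G), C^*_\red(H))$. The only difference is how that last fact is sourced — the paper cites \cite[Propositions~2.16 and~2.17]{BlaKir04PureInf} together with \autoref{prp:CharPropF}, whereas you argue via the slice map problem for closed ideals; your route is fine in substance, though the attribution should really go to the Kirchberg/Blanchard--Kirchberg results on exactness rather than to \cite{Was76SliceMapProblem}, which predates the notion of exactness.
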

\begin{proof}
A pair of \ca{s} automatically satisfies Tomiyama's property~(F) if one of the \ca{s} is exact. {To see this, note that \cite[Proposition~2.17(2)]{BlaKir04PureInf} implies that \cite[Proposition~2.16(ii)]{BlaKir04PureInf} is satisfied, and the statement is now a consequence of \autoref{prp:CharPropF}. }%;
%see, for example, \cite[Proposition~2.17(2)]{BlaKir04PureInf}.
Thus, it follows from the assumption that the pair of \ca{s} $C^*_\red(G)$ and~$C^*_\red(H)$ satisfies Tomiyama's property~(F).
Now the result follows from \autoref{prp:ProductGp}.
\end{proof}

%==========================================================================================
\begin{rmk}
\label{rmk:ProductGp}
Let $G$ and $H$ be locally compact groups.
By \cite[Corollary~3.5]{HauKanVoi90StarRegTensProdCAlg}, the product $G \times H$ is $*$-regular ($C^*$-unique) if and only $G$ and $H$ are $*$-regular ($C^*$-unique).

\autoref{prp:ProductGp} recovers and generalizes these results.
Indeed, it follows from \cite[Proposition~1.13]{Pat88Amen} that $G \times H$ is amenable if and only if $G$ and $H$ are amenable.
Therefore, \cite[Corollary~3.5]{HauKanVoi90StarRegTensProdCAlg} follows from \autoref{prp:ProductGp} (and \autoref{prp:ISP-PrductExactGp}) using that by \autoref{prp:CharStarReg} a locally compact group is $*$-regular ($C^*$-unique) if and only if it is amenable and satisfies the ISP (the IIP).
\end{rmk}
	
%==========================================================================================
With view towards \autoref{prp:ProductGp}, our \autoref{qst:DiscreteISP} if every discrete group has the ISP is closely related to \autoref{qst:PropFGroupAlgs} below.
Indeed, if $\Gamma_1$ and $\Gamma_2$ are discrete groups such that the pair of \ca{s} $C^*_\red(\Gamma_1)$ and~$C^*_\red(\Gamma_2)$ does not have Tomiyama's property (F), then $\Gamma_1 \times \Gamma_2$ does not have the ISP.
Note that such groups are necessarily non-exact.
They also cannot both be $C^*$-simple.
	
%==========================================================================================
\begin{qst}
\label{qst:PropFGroupAlgs}
Are there discrete groups $\Gamma_1$ and $\Gamma_2$ such that the pair of \ca{s} $C^*_\red(\Gamma_1)$ and~$C^*_\red(\Gamma_2)$ does not have Tomiyama's property (F)?
\end{qst}

%\bibliographystyle{../../aomalphaMyShort}
%\bibliography{../../References}

\providecommand{\etalchar}[1]{$^{#1}$}
\providecommand{\bysame}{\leavevmode\hbox to3em{\hrulefill}\thinspace}
\providecommand{\noopsort}[1]{}
\providecommand{\mr}[1]{\href{http://www.ams.org/mathscinet-getitem?mr=#1}{MR~#1}}
\providecommand{\zbl}[1]{\href{http://www.zentralblatt-math.org/zmath/en/search/?q=an:#1}{Zbl~#1}}
\providecommand{\jfm}[1]{\href{http://www.emis.de/cgi-bin/JFM-item?#1}{JFM~#1}}
\providecommand{\arxiv}[1]{\href{http://www.arxiv.org/abs/#1}{arXiv~#1}}
\providecommand{\doi}[1]{\url{http://dx.doi.org/#1}}
\providecommand{\MR}{\relax\ifhmode\unskip\space\fi MR }
% \MRhref is called by the amsart/book/proc definition of \MR.
\providecommand{\MRhref}[2]{%
  \href{http://www.ams.org/mathscinet-getitem?mr=#1}{#2}
}
\providecommand{\href}[2]{#2}

\end{document}